\DeclareMathAlphabet{\mathpzc}{OT1}{pzc}{m}{it}
\newtheorem{propo}{Proposition}[section]
\newtheorem{lemma}[propo]{Lemma}
\newtheorem{definition}[propo]{Definition}
\newtheorem{coro}[propo]{Corollary}
\newtheorem{thm}[propo]{Theorem}
\def\test{{\xi}}
\def \sB{{\sf B}}
\def \sH{{\sf H}}
\def \sE{{\sf E}}
\def\vz{{\mathbf z}}
\def\cH{{\cal H}}
\def\cE{{\cal E}}
\def\vt{{\vartheta}}
\def\naturals{{\mathbb N}}
\def\reals{{\mathbb R}}
\def\Prox{{\sf Prox}}
\def\dual{\Psi}
\def\prob{{\mathbb P}}
\def\E{{\mathbb E}}
\def\Var{{\rm Var}}
\def\MSE{{\sf MSE}}
\def\L0{{L_0}}
\def\de{{\rm d}}
\def\<{\langle}
\def\>{\rangle}
\def\bZ{{\mathbf Z}}
\def\bX{{\mathbf X}}
\def\bV{{\mathbf V}}
\def\tbX{\widetilde{\mathbf X}}
\def\tn{\tilde{n}}
\def\tp{\tilde{p}}
\def\tY{\widetilde{Y}}
\def\hbeta{\widehat{\beta}}
\def\htheta{\widehat{\theta}}
\def\F{{\sf F}}
\def\F{{\sf F}}
\def\normal{{\sf N}}
\def\Cnormal{{\sf CN}}
\def\sT{{\sf T}}
\def\id{{\rm I}}
\def\sign{{\rm sign}}
\def\v*{v_0}
\def\T*{T_0}
\def\u*{u_0}
\def\F*{F_0}
\definecolor{olivegreen}{rgb}{0,0.6,0.4}
\def\cL{{\cal L}}
\def\Corr{{\Gamma}}
\def\Huber{\rho_{{\rm H}}}
\def\MSE{{\rm MSE}}
\def\MAE{{\rm MAE}}
\def\RMSE{{\rm RMSE}}
\def\AMSE{{\rm AMSE}}
\def\cV{{\cal V}}
\newcommand{\bitem}{\begin{itemize}}
\newcommand{\eitem}{\end{itemize}}
\newcommand{\goto}{\to}
\newcommand{\beq}{\begin{equation}}
\newcommand{\eeq}{\end{equation}}
\newcommand{\ajcomment}[1]{}
\newcommand{\labitem}[2]{%
\def\@itemlabel{\text{#1}}
\item
\def\@currentlabel{#1}\label{#2}}
\newcommand{\ignore}[1]{}
\newcommand{\nobibentry}[1]{{\let\nocite\ignore\bibentry{#1}}}
\title{High Dimensional Robust M-Estimation:\\
 Asymptotic Variance via Approximate Message Passing}
\author{David Donoho
            \footnote{Department of Statistics, Stanford
              University} \, and \,Andrea~Montanari 
            \footnote{Department of Electrical Engineering and
              Department of Statistics, Stanford University}
            }
\begin{document}

\maketitle

\begin{abstract}
In a recent article (\emph{Proc. Natl. Acad. Sci.}, 110(36),
14557-14562), El Karoui et al. study  the distribution of robust regression
estimators in the regime in which the number of parameters $p$ 
is of the same order as the number of samples $n$. Using numerical 
simulations and `highly plausible' heuristic arguments, they 
unveil a striking new phenomenon. Namely, the regression coefficients 
contain an extra Gaussian noise component that is not explained
by classical concepts such as the Fisher information matrix.

We show here that that this phenomenon can be characterized rigorously
using techniques developed by the authors to analyze the
Lasso estimator under high-dimensional asymptotics. We introduce
 an \emph{approximate message passing} (AMP) algorithm to
compute  M-estimators and deploy \emph{state evolution} to 
evaluate the operating characteristics of AMP and so also M-estimates.
 Our analysis clarifies that the
`extra Gaussian noise' encountered in this problem is fundamentally similar to 
phenomena already studied for regularized least squares in the setting $n<p$.
\end{abstract}

\section{M-Estimation under high dimensional asymptotics}

Consider the traditional  linear regression model
\begin{eqnarray}\label{eq:NoisyModel}
Y\, =\, \bX\,\theta_0+ W\, ,\label{eq:Model}
\end{eqnarray}
with  $Y = (Y_1,\dotsc,Y_n)^\sT\in\reals^n$ a vector of
responses, $\bX\in \reals^{n\times p}$  a known design matrix,
$\theta_0\in\reals^p$ a vector of parameters, and $W\in\reals^n$ 
random
noise having zero-mean components
$W = (W_1,\dotsc,W_n)^{\sT}$ 
i.i.d.  with distribution $F = F_{W}$ 
having  finite second 
moment \footnote{With a slight abuse of notation, we
shall use $W$ to denote a random variable with the same distribution
$F_W$.}.

We are interested in estimating $\theta_0$ from observed data\footnote{We  denote by $X_1$, \dots, 
$X_n$ the rows of $\bX$.
 We often omit
the arguments $Y$, $\bX$ as this dependency will hold throughout.
Without loss of generality, we assume that the columns of 
$\bX$ are normalized so that $\|\bX\,e_i\|_2\approx 1$. (A more
precise assumption will be formulated below.) }
$(Y,\bX)$ using a traditional M-estimator, defined by a non-negative
convex function $\rho : \reals \to \reals_{\ge 0}$:
\begin{align}
\htheta (Y;\bX)  \equiv \arg\min_{\theta\in\reals^p}\cL(\theta;Y,\bX)\,
,\;\;\;\;\;\;\;\;\;
\cL(\theta;Y,\bX) \equiv \sum_{i=1}^n
\rho\big(Y_i-\<X_i,\theta\>\big)\, ,
\label{eq:Mestimation}
\end{align}
where $\<u,v\>=\sum_{i=1}^mu_iv_i$ is the standard scalar product in
$\reals^m$, and $\htheta$ is chosen arbitrarily if there is multiple  minimizers.

Although this is a completely traditional
problem, we consider it under \emph{high-dimensional asymptotics}
where the number of parameters $p$ and the number of observations $n$
are both tending to infinity, at the same rate.
This is becoming a popular asymptotic model
owing to the modern awareness of `big data'
and `data deluge'; but also because it
leads to entirely new phenomena.

\subsection{Extra Gaussian noise due to high-dimensional asymptotics}

Classical statistical theory considered the situation
where the number of regression parameters $p$ is fixed
and the number of samples $n$ is tending to infinity.
The asymptotic distribution was found by Huber \cite{huber1973robust,bickel1975} to be normal $\normal(0,\bV)$
where the asymptotic variance matrix $\bV$ is given by
\begin{equation}
 \bV = V(\psi,F_W) (\bX^{\sT}\bX)^{-1}   \label{eq:CLASSICALVAR}
\end{equation}
here $\psi = \rho'$ is the score function of the M-estimator
and $V(\psi,F) = (\int \psi^2 \de F)/(\int \psi' \de F)^2$ the asymptotic variance
 functional of \cite{HuberMinimax}, and $(\bX^{\sT}\bX)$ the usual Gram matrix
 associated with the least-squares problem. Importantly, it was found that
 for efficient estimation -- i.e. the smallest possible asymptotic variance --
 the optimal M-estimator depended on the probability distribution $F_W$ of the errors $W$.
Choosing   $\psi(x)=  (\log f_W(x))'$ (with $f_W$ the density of $W$), the asymptotic variance
functional yields $V(\psi,F_W) = 1/I(F_W)$, with $I(F)$ denoting
the Fisher information. 
This  achieves the fundamental limit on the
accuracy of M-estimators \cite{huber1973robust}.

In modern statistical practice there is increasing interest
in applications where the number of explanatory variables
$p$ is very large, and comparable to $n$.
Examples of this new regime can be given,
spanning bioinformatics,  machine learning,
imaging, and signal processing 
(a few research areas in the last domains include \cite{CSMRI,Seismic,Radar,Hyperspectral}).

This paper considers the properties of M-estimators
in the high-dimensional asymptotic $n \to \infty$,
$n/p(n) \to \delta \in (1,\infty)$  In this regime,
the asymptotic distribution of M-estimators
no longer needs to obey the classical formula (\ref{eq:CLASSICALVAR})
in widespread use. 
We make a random-design assumption on the $\bX$'s 
detailed below.
We show that the asymptotic covariance matrix
of the parameters is now of the form
\begin{equation} \label{HDAVarCov}
      \bV = V(\tilde{\Psi},\tilde{F}_W) (\E\{\bX^{\sT}\bX\})^{-1},
\end{equation}
where $V$ is still Huber's asymptotic variance functional,
but $\tilde{\Psi}$ is the  \emph{effective score function},
which is different from $\psi$ under high-dimensional asymptotics
and $\tilde{F}_W$ is the \emph{effective error distribution},
which is different from $F_W$ under high-dimensional asymptotics.
In the limit $\delta \to \infty$,  the effective score and the effective
error distribution both tend to their classical counterparts, and
one recovers $V(\psi,F_W)$.

The effective error distribution $\tilde{F}_W$
is a convolution of the noise distribution with an extra Gaussian
noise component, not seen in the classical setting (here $\star$
denotes convolution):
\begin{equation} \label{EffectiveNoise}
    \widetilde{F}_W\equiv F_W \star \normal(0,\tau_*^2(\psi,F_W,\delta))\, .
\end{equation}
The extra Gaussian noise 
depends in a complex way on $\psi$, $F_W$, $\delta$, which we characterize fully below
in Corollary \ref{coro:AysVariance}.

Several important insights follow immediately:
\begin{enumerate}
\item Existing formulas are inadequate for confidence
statements about M-estimates under high dimensional asymptotics,
and will need to be systematically broadened.
\item Classical maximum likelihood estimates are
inefficient under  high-dimensional asymptotics.
The idea dominating theoretical statistics since R.A. Fisher to use
$\psi = (- \log f_W)'$ as a scoring rule,
does not yield the efficient estimator.
\item The usual Fisher Information bound is not necessarily attainable
in the high-dimensional asymptotic, as $I(\widetilde{F}_W) < I(F_W)$.
\end{enumerate}

M-estimation in this high-dimensional
asymptotic setting was considered in a recent article by
El Karoui, Bean, Bickel, Lim, and Yu \cite{karoui2013robust}, who
studied the distribution of $\htheta$ for Gaussian
design matrices $\bX$. 
In short they observed empirically the basic phenomenon of
extra Gaussian noise appearing in high-dimensional
asymptotics and rendering classical inference incorrect.
The dependence of the additional
variance $\tau_*^2$ on  $\delta$, $\psi$ and $F$ was characterized by \cite{karoui2013robust}
through a non-rigorous heuristics \footnote{To the reader familiar with the mathematical theory of spin glasses, 
the argument of \cite{karoui2013robust} appears analogous to the cavity method
from statistical physics
\cite{SpinGlass,MezardMontanari,TalagrandVolI}}
that the authors describe as `highly plausible and
buttressed by simulations.'\footnote{After the first version of our manuscript was posted on ArXiv, 
      Noureddine El Karoui announced an
      independent proof of related results, using a completely different
      approach.}
(We refer to Section \ref{sec:Discussion} for further discussion of
related work.)

\subsection{Proof Strategy: Approximate Message Passing }

In the present paper, we show that this important statistical phenomenon can be
\emph{characterized rigorously}, in a way that we think fully explains the main new concepts
of extra Gaussian noise,
effective noise and the effective score. Our proof  strategy has three steps
\bitem
 \item Introduce an {\it Approximate Message Passing} (AMP) algorithm for M-estimation;
an iterative procedure with the M-estimator as a fixed point, and having the effective score
function $\tilde{\Psi}$ as its score function at algorithm convergence.
 \item Introduce {\it State Evolution} for calculating properties of the
 AMP algorithm iteration by iteration. We show that these calculations are exact at each iteration
  in the large-$n$ limit where we freeze the iteration number and let $n \to \infty$.
  
  At the center of the State Evolution calculation is  precisely an extra Gaussian noise term
  that is tracked from iteration to iteration, and which is shown to converge to a nonzero
  noise level. In this way, State Evolution makes very explicit that AMP faces at each iteration
  and even in the limit, an effective noise that differs from the noise $W$ by addition
  of an appreciable extra independent Gaussian noise.
  
 \item Show  that the AMP algorithm converges to the solution of
 the M-estimation problem in mean square, from which it follows that
 the asymptotic variance of the M-estimator is identical to
 the asymptotic variance of  the AMP algorithm. More specifically,
 the asymptotic variance of the M-estimator is given by a formula
 involving the effective score function and the effective noise. 
 \eitem

As it turns out,  our formula for the asymptotic variance
coincides with the one derived heuristically in
\cite[Corollary 1]{karoui2013robust} although our technique is
remarkably different, and our proof provides a very clear 
understanding of the operational significance of the terms
appearing in the asymptotic variance.  It also allows
explicit calculation of many other operating characteristics of the
M-estimator, for example when used as an outlier detector\footnote{The slightly more general \cite[Result
  1]{karoui2013robust} covers heteroscedastic noise is not covered by 
  the analysis of this paper, but should be provable
  by adapting our argument.}.

\subsection{Underlying tools}

At the heart of our analysis, we are simply applying an approach developed in
\cite{BM-MPCS-2011,BayatiMontanariLASSO} for rigorous analysis of 
solutions to convex optimization problems
under high-dimensional asymptotics.

That approach grew out of a series of earlier papers studying the
compressed sensing problem \cite{DMM09,DMM-NSPT-11,donoho2011compressed,BayatiMontanariLASSO}.
From the perspective of this paper, those papers considered the 
same regression model (\ref{eq:Model}) as here;  
however, they emphasized the challenging asymptotic
regime where there are fewer observations than predictors, (i.e.  $n/p(n)\to\delta\in (0,1)$)
so that even in the noiseless case, the equations $Y = \bX\theta$ would be underdetermined.
In the $p > n$ setting, it became popular to
use $\ell_1$-penalized least squares (Lasso, \cite{Tibs96,BP95}).
That series of papers  considered the Lasso convex optimization problem 
in the case of $\bX$ with iid $\normal(0,1/n)$ entries (just as here)
and followed the same 3-step strategy
we use here; namely, 1. Introducing an   
AMP algorithm; 2. Obtaining the asymptotic distribution of AMP
by State Evolution; and 3. Showing that AMP agrees with the Lasso solution in the large-$n$ limit.
This procedure proved that the Lasso solution has the asymptotic distribution
\begin{align}
\htheta^u\sim\normal(\theta_0, (\sigma^2+\tau_{\rm Lasso}^2)\id_{p\times p})
\end{align}
where $\sigma^2$ is the variance of the noise in the measurements,
and $\tau_{\rm Lasso}^2$ is the variance
of an extra Gaussian noise, not appearing in the classical setting where $p(n)/n \goto 0$.
The variance of this extra Gaussian noise was obtained by state evolution
and shown to depend on the distribution of the coefficients
being recovered, and on the noise level in a seemingly
complicated way that can be characterized
by a fixed-point relation, see \cite{DMM-NSPT-11,BayatiMontanariLASSO}.
At the center of the rigorous analysis stand the papers  
\cite{BM-MPCS-2011,BayatiMontanariLASSO} 
which analyze recurrences of the
type used by AMP and establish the validity of State Evolution
in considerable generality. Those same papers
stand at the center of our analysis in this paper. 

Apart from allowing a
simple treatment, this provides a unified understanding of  the
phenomenon of 
high-dimensional extra Gaussian noise.

 \subsection{The role of AMP}

This paper introduces a new 
first-order algorithm for computing the M-estimator $\htheta$
which is uniquely appropriate for the random-design case. 
This algorithm fits within the class of approximate message
passing (AMP) algorithms introduced in \cite{DMM09,BM-MPCS-2011}
(see also \cite{RanganGAMP} for extensions). 
This algorithm is of independent interest because of its low
computational complexity. 

AMP has a deceptive simplicity. 
As an iterative procedure for convex optimization,
it looks almost the same as the `standard' 
application of simple fixed-stepsize gradent descent.
However, it is intended for use in the random-design setting,
and it has an extra memory term (aka reaction term)
that modifies the iteration in a profound and beneficial way.
In the Lasso setting, AMP algorithms have 
been shown to have remarkable fast convergence properties \cite{DMM09},
far outperforming more complex-looking iterations like Nesterov
and FISTA.

In the present paper, AMP has an second important wrinkle --
it solves a convex optimization problem associated to minimizing
$\rho$ with iterations based on gradient descent with an objective $\rho_{b_t}$
which varies from one iteration to the next, as $b_t$ changes,
but which does not tend to $\rho$ in the limit.

In the present paper, AMP is mainly used as a proof device,
one component of the three-part strategy outlined earlier.   However,
a key benefit produced by the curious features of AMP is
strong heuristic insight, which would not be available
for a `standard' gradient-descent algorithm.

The AMP proof strategy makes visible the extra Gaussian noise appearing in
the M-estimator $\htheta$. 
Elementary considerations show that such extra noise
is present at iteration zero of AMP.
State Evolution faithfully tracks the dynamics of this
extra noise across iterations. State Evolution
proves that the extra noise level does not go to zero
asymptotically with increasing iterations, but instead
that the extra noise level tends to a fixed nonzero value.
Because AMP is solving the M-estimation
problem, the M-estimator must be infected by this
extra noise.

The AMP algorithm and its State Evolution analysis shows that 
the extra noise in parameter
$\htheta_i^t$ at iteration $t$ is due to cross-parameter 
estimation noise leakage, where 
errors in the estimation of all other parameters at the
previous iteration $(t-1)$ cause extra noise to appear in $\htheta_i^t$.
In the classical setting no such effect is visible.
One could say that the central fact about the high-dimensional
 setting revealed here as well as in our earlier work
\cite{DMM09,DMM-NSPT-11,donoho2011compressed,BayatiMontanariLASSO},
is that when there are so many parameters
to estimate, one cannot really insulate the estimation
of any one parameter from the errors in estimation of all the other 
parameters.

%
%
\section{Approximate Message Passing (AMP)}

\subsection{A family of score functions}

For the rest of the paper, we make the following smoothness assumption  on $\rho$:
\begin{definition}
We call the loss function $\rho:\reals\to\reals$ \emph{smooth} if it is  continuously 
  differentiable, with absolutely continuous derivative $\psi = \rho'$ having an a.e. derivative  $\psi'$ that
  is bounded: $\sup_{u\in \reals}\psi'(u)<\infty$. 
\end{definition}
Our assumption excludes some interesting cases, such as
$\rho(u) = |u|$, but includes for instance the Huber loss
\footnote{We expect that the proof technique developed in
this paper should be generalizable to a broader class of functions
$\rho$, at the cost of additional technical complications.}
\begin{align}
\Huber(z; \lambda) = \begin{cases} 
z^2/2 & \mbox{if $|z|\le \lambda$,}\\
\lambda |z| - \lambda^2/2 & \mbox{otherwise.}
\end{cases}\label{eq:HuberDef}
\end{align}

Associated to $\rho$,  we introduce the family $\rho_b$ of regularizations of $\rho$:
\begin{align}
 \rho_b(z) \equiv\min_{x\in\reals} \Big\{b\rho(x) +
 \frac{1}{2}(x-z)^2\Big\}\, ,
\end{align}
 in words,  this is the min-convolution of the original loss with a square loss.
Each $\rho_b$ has a corresponding
 score function
 \[
 \dual(z;b) = \rho'_b(z).
 \]
  The effective score
 of the M-estimator belongs to this family, for a particular choice of $b$,
 explained below.
 
In the classical  M-estimation literature \cite{HuberBook},
monotonicity and differentiability of the
score function $\psi$ is frequently useful; our assumptions on
$\rho$ guarantee these properties for the nominal score function $\psi$.
 The score family $\dual(\,\cdot\, ; b)$ has such properties as well:
 for any $b$, 
 $\dual(\,\cdot\,;b)$ is a strictly monotone increasing  
 function; second,  for any $b>0$,  $\dual(\,\cdot\,;b)$ 
 is a contraction. With $\dual'$
 denoting differentiation with respect to the first variable,
 we have $\dual'(z;b) \in (0,1)$.
For proof and further discussion, see Appendix \ref{app:Properties}.

Before proceeding, we give an example.
Consider the Huber loss
$\Huber(z; \lambda)$,
with score function $\psi(z;\lambda) = \min(\max(-\lambda,z),\lambda)$.
We have
\[
   \dual( z ; b) =  b \psi\Big( \frac{z}{1+b} ; \lambda\Big) .
\]
In particular the shape of each $\Psi$ is similar to $\psi$,
but the slope of the central part is now  $\|\Psi'( \,\cdot\,;b) \|_\infty = \frac{b}{1+b} < 1$.


\subsection{AMP algorithm}

\newcommand{\res}{R}
Our proposed approximate message passing (AMP) algorithm
for the optimization problem (\ref{eq:Mestimation}) 
 is iterative, starting at iteration $0$ with an initial estimate
 $\htheta^0\in\reals^p$. At iteration  $t=0,1,2,\dots$ it 
 applies a simple procedure to update its estimate
 $\htheta^{t}\in\reals^p$, producing $\htheta^{t+1}$.   
 The procedure involves three steps.
 \begin{description}
 \item [Adjusted residuals.]
 Using the current estimate $\htheta^t$,
 we  compute the vector of {\it adjusted 
residuals} $\res ^t\in\reals^n$, 
\begin{align}
\res^t & = Y -\bX\htheta^t+\dual(\res^{t-1};b_{t-1})\, ; \label{eq:AMP1}
\end{align}
where to the ordinary residuals $Y - \bX \htheta^t$ we here add the extra term\footnote{Here and below, given $f:\reals\to\reals$
and $v=(v_1,\dots,v_m)^{\sT}\in\reals^m$, we define $f(v)\in\reals^m$
by applying $f$ coordinate-wise to $v$, i.e. $f(v) \equiv
(f(v_1),\dots ,f(v_m))^{\sT}$.}
$\dual(\res^{t-1};b_{t-1})$.

\item [Effective Score.]
We choose a  scalar $b_t > 0$,
so that the effective score $\dual(\,\cdot\,;  b_t)$ has empirical average slope $p/n \in (0,1)$.
Setting $\delta=\delta(n) = n/p>1$, we take any solution\footnote{This equation always admits at
  least one solution since $b\mapsto \dual'(r;b)$ is continuous in
  $b\ge 0$, with $\dual'(r;0)=0$ and (for $\rho$ strictly convex)
  $\dual'(r;\infty)=1$, cf. Proposition \ref{propo:Prox}.} (for instance the
smallest solution) to \footnote{Under this prescription,
the sequence $b_t$ depends on the instance $(Y,\bX)$. As explained in
the next section, for the proof of our main result we will use a
slightly different prescription, that is independent of the problem instance.}:
\begin{align}
 \frac{1}{\delta} = \frac{1}{n}\sum_{i=1}^n\dual'(\res^t_i;b)\, .  \label{eq:AMPb}
\end{align}
\item [Scoring.] We apply the effective score function $\dual(\res^t; b_t)$:
\begin{align}
%
\htheta^{t+1} & = \htheta^t+\delta\bX^{\sT}\dual(\res^t;b_t)\, . \label{eq:AMP2}
\end{align}

\end{description}


%

The Scoring step of the AMP iteration  (\ref{eq:AMP2}) is
 similar to traditional iterative methods for M-estimation, compare
\cite{bickel1975}.  Indeed, using the traditional
residual $z^t = Y - \bX \theta^t$, the traditional method of scoring at iteration $t$ 
would read
\beq \label{eq:TradMethScore}
       \htheta^{t+1}  = \htheta^t +   \frac{1}{\frac{1}{n}\sum_{i=1}^n\psi'(z^t_i)} (\bX^{\sT}\bX)^{-1}  \bX^{\sT} \psi(z^t),
\eeq
and one can see  correspondences of individual terms to the method of scoring used in AMP.
Of course the traditional term $ [\sum_{i=1}^n\psi'(z^t_i)/n]^{-1}$
corresponds to  AMP's $ [\sum_{i=1}^n\dual'(\res^t_i; b_t)/n]^{-1} \equiv \delta$ (because of step (\ref{eq:AMPb})),
while the traditional term $(\bX^{\sT}\bX)^{-1}$ corresponds to AMP's implicit $\id_{p\times p}$ -- which is
appropriate in the present context because 
our  random-design assumption below makes $\bX^{\sT}\bX$  
behave approximately like the identity matrix.

\subsection{Relation to M-estimation}

The next lemma explains the reason for using 
the effective score $\dual(\cdot ; b_t)$ in the AMP algorithm: this is what
connects the AMP iteration to  M-estimation (\ref{eq:Mestimation}). 
\begin{lemma}\label{lemma:AMP-Minimizers}
Let  $(\htheta_*,\res_*,b_*)$ be a fixed point of the AMP iteration
(\ref{eq:AMP1}), (\ref{eq:AMPb}), (\ref{eq:AMP2}) having $b_*>0$. Then  $\htheta_*$ is a minimizer of
the problem (\ref{eq:Mestimation}). Viceversa, any minimizer
$\htheta_*$  of the problem (\ref{eq:Mestimation}) corresponds to one
(or more) AMP fixed points of the form $(\htheta_*,\res_*,b_*)$.
\end{lemma}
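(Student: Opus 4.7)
The plan is to reduce the three coupled fixed-point equations to the single first-order optimality condition $\bX^\sT\psi(Y-\bX\htheta_*)=0$ for the convex objective $\cL$, and then invert that reduction. The central tool is the proximal representation of $\dual$. Writing $x^*(z)=\arg\min_x\{b\rho(x)+\tfrac12(x-z)^2\}=\Prox_{b\rho}(z)$ (well-defined for $b>0$ since the inner problem is strictly convex), the inner stationarity condition gives $x^*(z)=z-b\,\psi(x^*(z))$, and the envelope theorem gives $\rho_b'(z)=z-x^*(z)$. Combining these yields the identity
\[
\dual(z;b)\;=\;z-x^*(z)\;=\;b\,\psi(x^*(z)),
\]
which does the bulk of the work and will be applied coordinate-wise to vectors.

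For the forward implication, suppose $(\htheta_*,\res_*,b_*)$ is an AMP fixed point with $b_*>0$. The scoring equation \eqref{eq:AMP2} at the fixed point forces $\bX^\sT\dual(\res_*;b_*)=0$. Rearranging the residual equation \eqref{eq:AMP1} gives $\res_*-\dual(\res_*;b_*)=Y-\bX\htheta_*$, and by the identity above the left-hand side equals $x^*(\res_*)$; so coordinate-wise $x^*(\res_{*,i})=Y_i-\<X_i,\htheta_*\>$. Substituting this back into $\dual(\res_*;b_*)=b_*\psi(x^*(\res_*))$ and dividing the vanishing scoring condition by $b_*>0$ produces $\bX^\sT\psi(Y-\bX\htheta_*)=0$. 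Because $\cL$ is convex with $\nabla_\theta\cL(\theta;Y,\bX)=-\bX^\sT\psi(Y-\bX\theta)$, this is exactly the first-order optimality condition, so $\htheta_*$ minimizes \eqref{eq:Mestimation}.

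For the converse, let $\htheta_*$ minimize \eqref{eq:Mestimation} and set $Z_*=Y-\bX\htheta_*$, so that $\bX^\sT\psi(Z_*)=0$. Pick $b_*>0$ as in the final paragraph, and define the candidate residual coordinate-wise by $\res_*:=Z_*+b_*\psi(Z_*)$. Then $Z_{*,i}$ satisfies the proximal stationarity relation $b_*\psi(Z_{*,i})+(Z_{*,i}-\res_{*,i})=0$, so by strict convexity of the prox objective $x^*(\res_{*,i})=Z_{*,i}$ and hence $\dual(\res_*;b_*)=b_*\psi(Z_*)=\res_*-Z_*$. Plugging this in verifies \eqref{eq:AMP1} since $\res_*-\dual(\res_*;b_*)=Z_*=Y-\bX\htheta_*$, verifies \eqref{eq:AMP2} since $\bX^\sT\dual(\res_*;b_*)=b_*\bX^\sT\psi(Z_*)=0$, and verifies \eqref{eq:AMPb} by the choice of $b_*$.

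The one genuinely delicate point is existence of $b_*>0$ in the converse, because the candidate $\res_*$ itself depends on $b$. Define $G(b)=\frac{1}{n}\sum_{i=1}^n\dual'\bigl(Z_{*,i}+b\,\psi(Z_{*,i});\,b\bigr)$; then $G$ is continuous with $G(0)=0$ (since $\dual'(\,\cdot\,;0)\equiv 0$) and, using the limits and monotonicity of $\dual'$ in $b$ recalled from Appendix~\ref{app:Properties} and the footnote to \eqref{eq:AMPb}, $G(b)\to 1$ as $b\to\infty$. Since $\delta=n/p>1$ gives $1/\delta\in(0,1)$, the intermediate value theorem supplies a $b_*\in(0,\infty)$ with $G(b_*)=1/\delta$. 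Everything else in the proof is routine algebra built on the prox identity above.
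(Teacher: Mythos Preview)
Your proof is correct and follows the same route as the paper: both hinge on the proximal identity $\dual(z;b)=z-\Prox(z;b)=b\,\psi(\Prox(z;b))$ (the paper's Proposition~\ref{propo:Phi}) to convert the AMP fixed-point equations into the first-order optimality condition $\bX^\sT\psi(Y-\bX\htheta_*)=0$. Your treatment of the converse is in fact more explicit than the paper's, which stops after the forward direction; your construction $\res_*=Z_*+b_*\psi(Z_*)$ and the intermediate-value argument for $b_*$ (under the same strict-convexity caveat as the footnote to \eqref{eq:AMPb}) fill in exactly what the paper leaves to the reader.
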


\begin{proof}
By differentiating Eq.~(\ref{eq:Mestimation}), and omitting the
arguments $Y,\bX$ for simplicity from $\cL(\theta;Y,\bX)$, we get
\begin{align}
\nabla_{\theta}\cL(\theta) = -\sum_{i=1}^n
\rho'\big(Y_i-\<X_i,\theta\>\big)\, X_i = - \bX^{\sT} \rho'(Y-\bX\theta)\, ,\label{eq:Stationarity}
\end{align}
where as usual $\rho'$ is applied component-wise to vector arguments.
The minimizers of $\cL(\theta)$ are all the vectors $\theta$ for which
the right hand side vanishes.

Consider then a fixed point $(\htheta_*,\res_*,b_*)$, of the AMP
iteration (\ref{eq:AMP1}), (\ref{eq:AMP2}). This satisfies the
equations
\begin{align}
\res_* & = Y -\bX\htheta_*+\dual(\res_*;b_*)\, ,\\
0 & = \delta\bX^{\sT}\dual(\res_*;b_*)\, .\label{eq:FixedPoint2S}
\end{align}
The first equation can be written
as
\begin{align}
Y-\bX\theta_* =  \res_* - \dual(\res_*;b_*)\, , \label{eq:ProxFix}
\end{align}
Using Proposition \ref{propo:Phi} below, (\ref{eq:ProxFix})
implies that  $\dual(\res_*;b_*) =  b_* \rho'(Y-\bX\htheta_*)$. 
Hence  the second equation reads
\begin{align}
0 & = \delta b_*\bX^{\sT}\rho'(Y-\bX\htheta_*)\, ,
\end{align}
which coincides with the stationarity condition
(\ref{eq:Stationarity}) for $b_*>0$. This concludes the proof.
\end{proof}

\subsection{Example}

To make the AMP algorithm concrete, we consider an example
with $n=1000$, $p=200$, so $\delta=5$.  For design matrix
we let $X_{i,j} \sim \normal(0,\frac{1}{n})$, and we draw $\theta_0$
a random vector of norm $\| \theta_0 \|_2 = 6 \sqrt{p} $.
For the distribution $F = F_W$ of errors,
we use Huber's contaminated normal
distribution $\Cnormal(0.05,10)$, 
so that $F = 0.95 \Phi + 0.05  H_{10}$,
where $H_x$ denotes a unit atom at $x$.
For the loss function, we use the Huber's $\Huber(z;\lambda)$ with $\lambda= 3$.
Starting the AMP algorithm 
with $\htheta^0 = 0$, we run 20 iterations.

Separately, we solved the M-estimation problem 
using CVX, obtaining $\htheta$.

Figure \ref{fig:AMPHist} (left panel) shows the progress of the AMP
algorithm across iterations, presenting
\[
\RMSE(\htheta^t;\theta_0) \equiv \frac{1}{\sqrt{p}}\,\|\htheta^t-\theta_0\|_2\, ,
\] 
while Figure \ref{fig:AMPHist} (right panel) shows the progress of AMP
in approaching the M-estimate $\htheta$, as measured by
\[
  \RMSE(\htheta^t; \htheta) \equiv \frac{1}{\sqrt{p}}\,\|\htheta^t-\htheta\|_2.
\]
As is evident, the iterations converge rapidly, and they
converge to the M-estimator, both in the sense of convergence of risks  \-- measured here by 
$\RMSE(\htheta^t;\theta_0) \goto \RMSE(\htheta; \theta_0) \approx 1.6182$ \--  and,
more directly, in convergence of the estimates themselves: $\RMSE(\htheta^t; \htheta) \goto 0$.

\begin{figure}
\begin{center}
\includegraphics[height=4in]{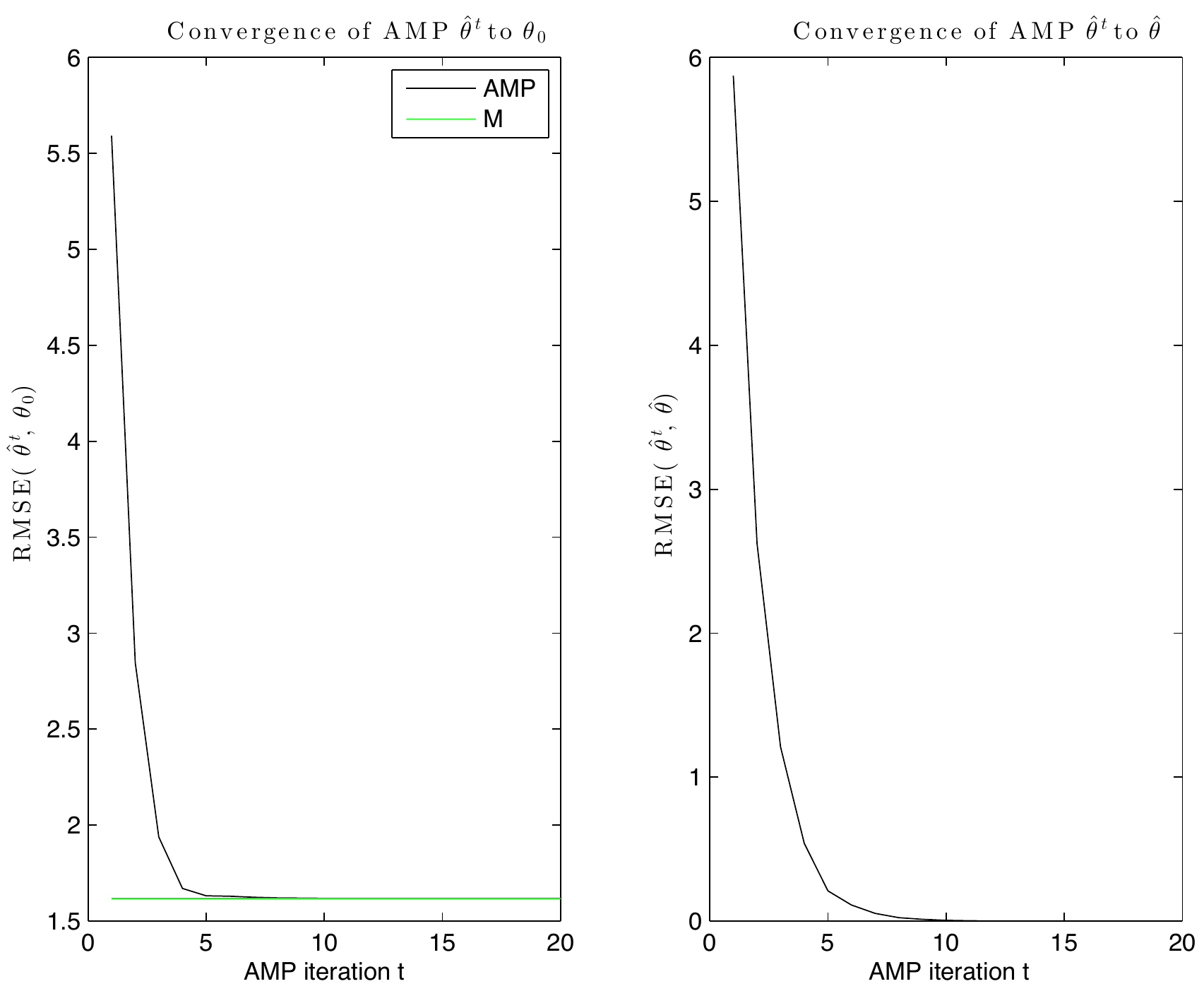}
\caption{Left Panel: RMSE of AMP versus iteration (black curve), and its convergence to RMSE of M-estimation (constant green curve).
Right Panel: Discrepancy of AMP from M-estimate, versus iteration. }
\label{fig:AMPHist}
\end{center}
\end{figure}

Figure \ref{fig:bHist} (left panel) shows the process by which the effective score parameter $\hat{b}_t$
is obtained at iteration $t=3$, while the right panel shows how  $\hat{b}_t$ 
behaves across iterations. In fact it converges quickly towards a limit $b_\infty  \approx 0.2710$.
\begin{figure}
\begin{center}
\includegraphics[height=4in]{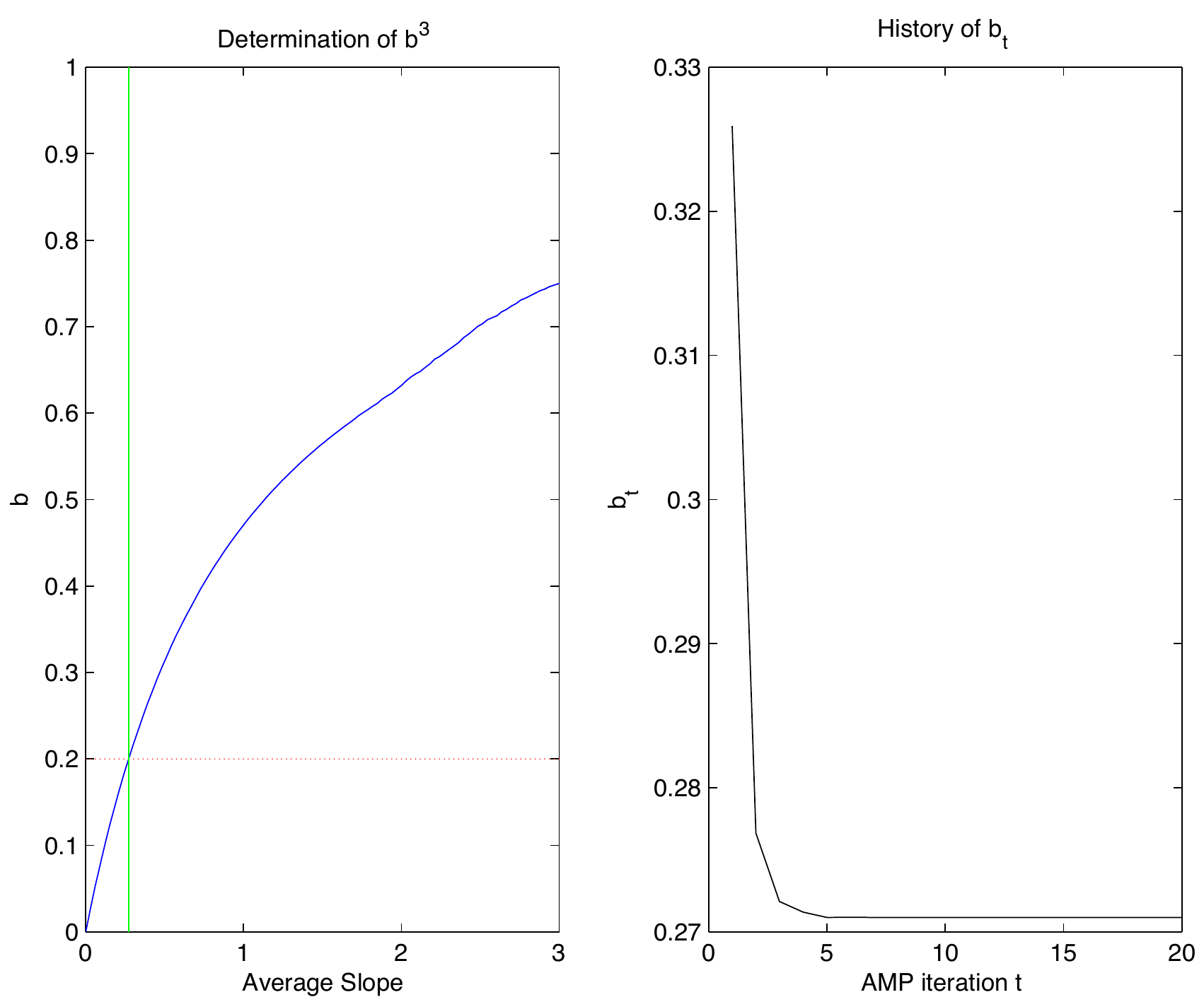}
\caption{Left Panel: Determining the regularization parameter at iteration 3.
Blue curve: Average slope (vertical) versus regularization parameter $b$ (horizontal).
The blue curve intersects desired level $0.2 =1/\delta$ near 0.3. 
Right Panel: regularization parameter $b_t$ versus iteration; it converges rapidly to roughly $0.2710$. }
\label{fig:bHist}
\end{center}
\end{figure}
\subsection{Contrast to iterative M-estimation}

Earlier we pointed to  resemblances between AMP (\ref{eq:AMP2}) and the traditional 
method of scoring for obtaining M-estimators (\ref{eq:TradMethScore}).
In reality the two approaches are very different:
\begin{itemize}
\item The precise form of various terms in (\ref{eq:AMP1}), (\ref{eq:AMPb})  (\ref{eq:AMP2}) is
dictated by the statistical assumptions that we are making on the
design $\bX$. In particular the memory terms are crucial for the state
evolution analysis to hold. Several papers document this point
\cite{MontanariChapter,SchniterTurbo,Schniter-NonUniform-2010,RanganGAMP,krzakala2013phase}.
\item Under classical asymptotics, where $p$ is fixed and $n \goto \infty$, 
  it is sufficient to run a single
  step of such an algorithm \cite{bickel1975}, in the high-dimensional
  setting it is necessary to iterate numerous times. The resulting
  analysis is considerably more complex because of correlations
  arising as the algorithm evolves. 
\end{itemize}

%
\section{State evolution description of AMP}
\label{sec:StateEvolution}

State Evolution is a method for computing 
the operating characteristics of the AMP iterates $\htheta^t$ and $\res^t$
for arbitrary fixed $t$,  under the high-dimensional asymptotic limit $n,p\to\infty$,
$n/p \goto \delta$. 

In this section we initially describe a purely formal procedure
which \emph{assumes} that  the AMP adjusted residuals
$\res^t = Y - \bX \htheta^t + \Psi(\res^t;b_t)$ really behave as $W + \tau_t Z$,
with $W$ the error distribution and $Z$ an independent standard normal,
for $t=0,1,2,\dots$.
The variable $\tau_t^2$ thus quantifies the extra Gaussian noise
supposedly present in the adjusted residuals of AMP; 
 we  show how this ansatz allows one to calculate $\tau_t^2$
for each $t=0,1,2,3, \dots$, and to calculate the limit of
$\tau_t$ as $t \goto \infty$. Later in the section we present a
rigorous result
validating the method under the following random Gaussian design assumption.
\begin{definition} \label{def-GaussDesign}
We say that a sequence of random design matrices $\{\bX(n)\}_n$,
 with $n\to\infty$ is a \emph{Gaussian design} if each
 $\bX=\bX(n)$ has dimensions $n\times p$, and entries $(X_{ij})_{i\in
    [n],j\in [p]}$ that are i.i.d. $\normal(0,1/n)$. Further, $p = p(n)$
  is such
that $\lim_{n\to\infty} n/p(n) = \delta\in (0,\infty)$.
\end{definition}

\subsection{Initialization of the extra variance}

\newcommand{\bu}{{\bf u}}
Under the Gaussian design assumption,
suppose that $\bu$ is a vector in $\reals^p$ with norm $\| u \|_2$.
Then $\{\E \| \bX u \|_2^2\} = \|u\|_2^2$. Moreover, $X\bu$ is a Gaussian
random vector with entries iid $\normal(0,\|u\|_2^2/n)$.

It will be convenient to introduce for any estimator $\tilde{\theta}$ the notation
\beq \label{eq:defMSE}
      \MSE(\tilde{\theta},\theta_0) =  \frac{1}{p}\, m \| \tilde{\theta} - \theta_0 \|_2^2 .
\eeq

So initialize
AMP with a deterministic estimate $\htheta^0$, and take $\res^{-1}=0$. 
Then the initial residual is
$\res^1 = Y - \bX \htheta^0 = W + \bX ( \theta_0 - \htheta^0)$.
The terms $W$ and  $\bX ( \theta_0 - \htheta^0)$ are independent,
and  $\bX ( \theta_0 - \htheta^0)$ is Gaussian with variance
$\tau_0^2 =   \|\htheta^0 - \theta_0\|_2^2/n = \MSE(\htheta^0, \theta_0)/\delta$.
Consider some fixed coordinate $\res^1(i)$ of $\res^1$. Then
\[
   \Var(\res^1_i) = \Var(W) + \Var(\bX (\theta_0 - \htheta^0)) = \Var(W) + \MSE(\theta^0, \theta_0)/\delta .
\] 
Hence, when AMP is started this way, we see that
the adjusted residuals initially contain an extra Gaussian noise
of variance $\tau_0^2 = \MSE(\htheta^0,\theta_0)/\delta$. 

\subsection{Evolution of the extra Gaussian variance to its ultimate limit}

Assuming 
the adjusted residuals continue, at later iterations, to behave as $W + \tau_t\, Z$
with $Z$ an independent standard normal,
 we now calculate $\tau_t^2$
for each $t=1,2,3, \dots$, and eventually identify the limit of
$\tau_t$ as $t \goto \infty$.

For a given $\tau > 0$, $\delta = n/p$ and noise distribution $F_W$,
define the \emph{variance map}
\[
   \cV(\tau^2 , b; \delta, F_W)=   \delta \, \E\Big\{\dual(W+\tau\, Z;b)^2\Big\}\, ,
\]
where $W\sim F_W$, and, independently, $Z\sim\normal(0,1)$.
In this display, the reader can see that extra Gaussian noise of
variance $\tau^2$ is being added to the underlying noise $W$,
and $\cV$ measures the $\delta$-scaled variance of the
resulting output.
Evidently for $b > 0$, $0 \leq \cV(\tau^2, b) \cdot \delta  \leq (\Var(W)+\tau^2) \cdot \delta$.

Under our assumptions for $\Psi$,
for each given specification $(\tau;\delta,F_W)$ of 
the ingredients besides $b$ that go into $\cV$, there is (as clarified by Lemma \ref{lemma:BtDef})
a well-defined value  $b = b(\tau;\delta,F_W)$ giving  the smallest
solution $ b \geq 0 $ to
\begin{align}
\frac{1}{\delta} = \E\Big\{\Psi'(W+\tau \cdot \,Z;b)\Big\}\, .\label{eq:BtDef}
\end{align}

\newcommand{\tcV}{\tilde{\cal V}}
\begin{definition}
{\em State Evolution} is an iterative process for computing the scalars
$\{\tau^2_t\}_{t\ge  0}$,  starting from an initial condition 
$\tau_0^2\in \reals_{\ge 0}$ following
\begin{align}
\tau_{t+1}^2 =  \cV (\tau_t^2 , b(\tau_t) ) =  \cV(\tau_t , b(\tau_t; \delta,F_W) ; \delta, F_W) .\label{eq:StateEvolution}
\end{align}
\end{definition}

Defining  $\tcV(\tau^2)=  \cV (\tau^2 , b(\tau) )$, we see that the evolution of $\tau_t$
follows the iterations of the map $\tcV$. In particular, we make these
observations:
\bitem
 \item $\tcV(0)  > 0 $,
 \item $\tcV(\tau^2)$ is a continuous, nondecreasing function of $\tau$.
 \item $\tcV(\tau^2) < \tau^2$ as $\tau \goto \infty$.
\eitem

Figure \ref{fig:SEHist}, left panel, considers the case where $W$
again follows the Huber's contaminated
normal distribution $\Cnormal(0.05,10)$ and $\psi$ is the standard Huber estimator
with parameter $\lambda=3$. The ratio $n/p = \delta = 2$, and the parameter vector has $ \|\theta_0 \|_2^2/p = 6^2$.
It displays the function $\tilde{V}(\tau^2)$ as a function of $\tau$.

\begin{figure}
\begin{center}
\includegraphics[height=2.5in]{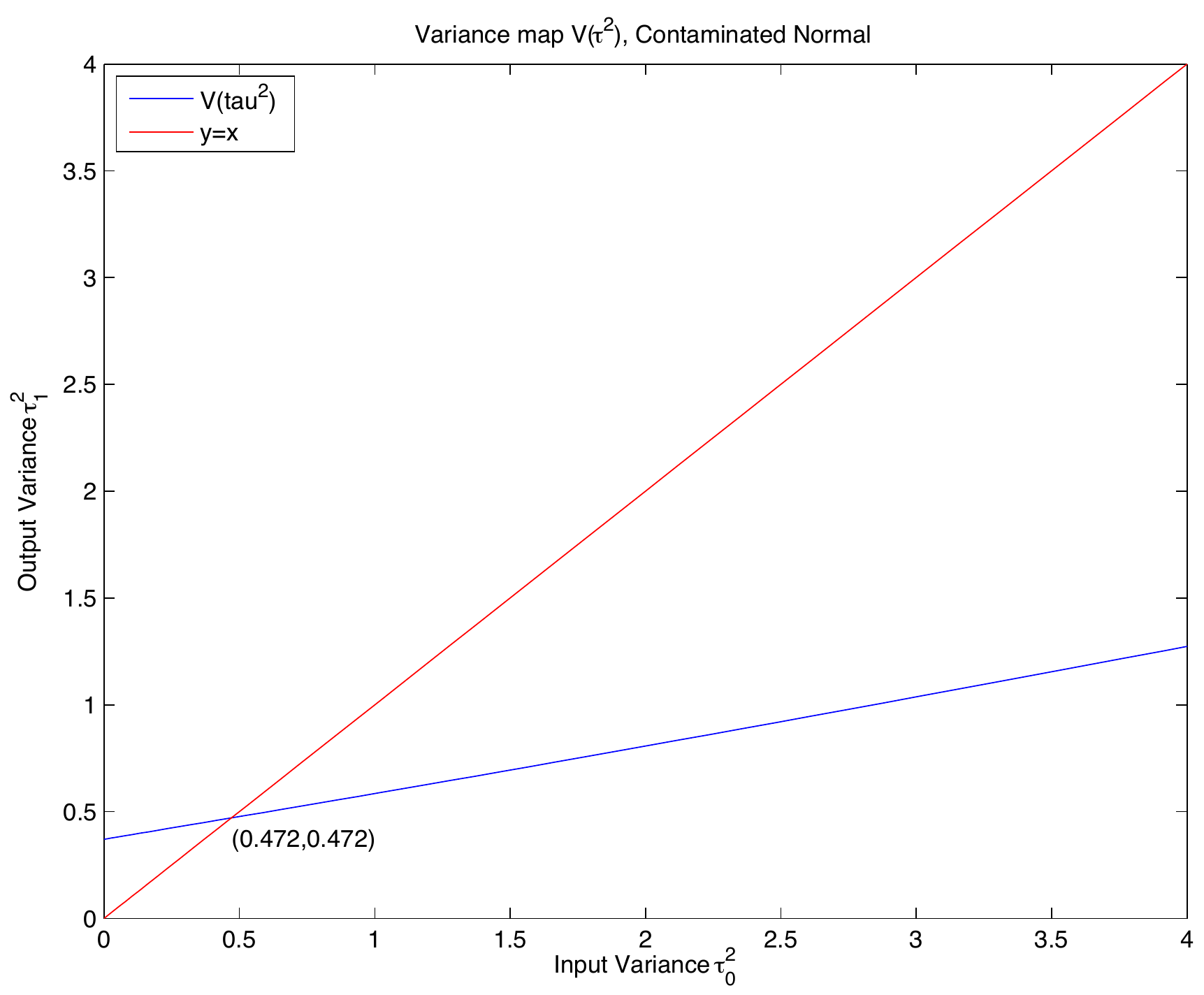} \qquad 
\includegraphics[height=2.5in]{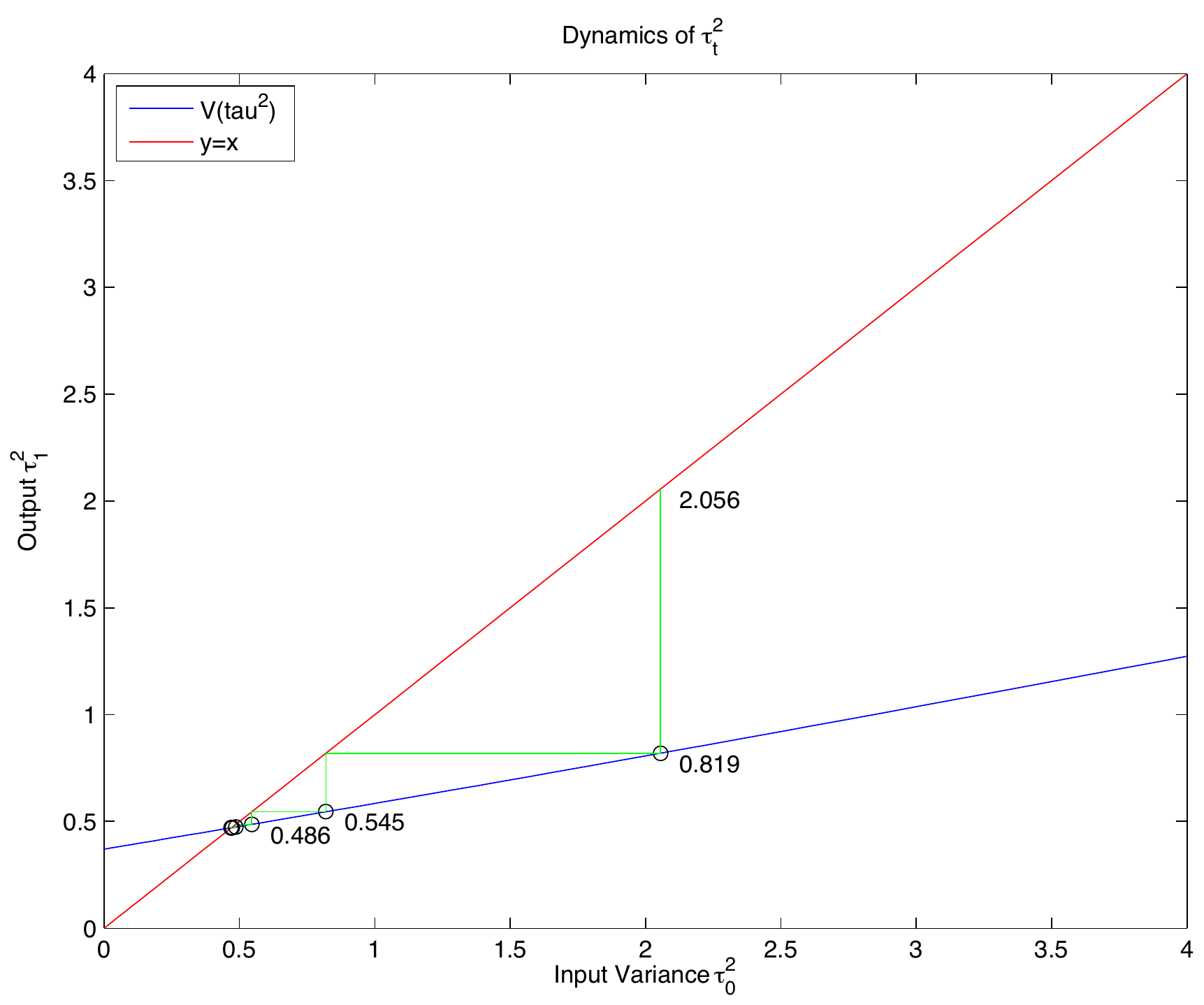} 
\caption{The State Evolution Variance Mapping. Left Panel: Blue Curve: $\tcV$ versus $\tau^2$, Red Curve: diagonal; unique fixed point
at about $0.472$.
Right Panel:  the iteration history of state evolution, starting from $\tau_0^2 = 2.0556$ }
\label{fig:SEHist}
\end{center}
\end{figure}

Evidently, there is a stable fixed point $\tau_* = \tau_*(\delta,F_W)$, i.e. a point obeying
$\tcV(\tau_*^2) = \tau_*^2$,  such that  $\tau^2 \mapsto \tcV(\tau^2)$ has 
a derivative less than $1$ at $\tau_*^2$.
We conclude that $\tau_t$ evolves under state evolution to
a nonzero limit. 
 Figure \ref{fig:SEHist}, right panel, shows how $\tau_t^2$ evolves to the fixed point near $0.472$ 
starting from $\tau_0^2 = 2.056$.

%

\subsection{Predicting operating characteristics from State Evolution}

State Evolution offers a  formal\footnote{By \emph{formal},  we mean a rule-based procedure
which we can follow to get a prediction, without any guarantees that the prediction is correct.} procedure for 
predicting operating characteristics
of the AMP iteration at any fixed iteration $t$
or in the limit $t \goto \infty$. Nater in this section, we will
provide rigorous validation of these predictions.

Call  the tuple $S = (\tau ; b, \delta, F)$ a {\em state}; in running the AMP
algorithm we  assume that the algorithm is initialized with $\htheta^0$
so that $\tau_0^2 = \MSE(\htheta^0,\theta_0)/\delta$, so that AMP starts in state $S = (\tau_0; b^0,\delta,F)$,
and visits $S_1 = (\tau_1;b^1,\delta,F)$, $S_ 2 = (\tau_2;b^2,\delta,F)$, \dots; eventually AMP 
visits states arbitrarily close to the equilibrium state $S_*= (\tau_*; b^*, \delta, F)$.

SE predictions of operating characteristics are provided by two rules
assigning predictions to certain classes of observables, based on the state
that AMP is in.

\begin{definition} \label{def:SEFormalism}
The {\em state evolution formalism} assigns predictions $\cE$ to 
two types of observables
under specific states. 
\begin{description}
\item [Observables Involving $\htheta -\theta_0$. ]
Given a  univariate test function $\test : \reals \mapsto \reals$,
assign the predicted value for 
$p^{-1}\sum_{i\in p}\test( \htheta_i -  \theta_{0,i})$ under  state $S$ by the rule
\[
 \cE(\test(\htheta - \vartheta) | S) \equiv   \E\Big\{\test(\sqrt{\delta}\,\tau\, Z)\Big\}\,  ,
\]
where expectation on the right hand side is with respect to $Z \sim \normal(0,1)$.
\item [Observables involving Residual, Error. ] Let $R$ denote some coordinate of
the  adjusted residual for AMP in state $S$ and $W$ the same coordinate of the
underlying error. 
Given a  bivariate test function $\test_2 : \reals^2 \mapsto \reals$,
assign the prediction of $n^{-1}\sum_{i=1}^{n}\test_2(R_i,W_i)$ in  state $S$ by
\[
 \cE(\test_2(R,W) | S) \equiv   \E  \test_2(W+\tau\, Z,W) 
\]
where $Z \sim \normal(0,1)$ and $W \sim F_W$ is independent of $Z$.
\end{description}
\end{definition}

The two most important predictions of  operating characteristics
are undoubtedly:
\bitem
 \item \emph{$\MSE$ at iteration $t$}. 
We let $S_t = (\tau_t, b(\tau_t), \delta, F_W)$ denote the
 state of AMP at iteration $t$, and predict
 \[
     \MSE(\htheta^t, \theta_0) \approx  \cE((\hat{\vartheta} - \vartheta)^2 | S_t) = \E\Big\{(\sqrt{\delta}\,\tau_t\, Z)^2 \Big\} = \delta \tau_t^2.
 \]
 \item \emph{$\MSE$ at convergence}. 
With $\tau_* > 0 $ the limit of  $\tau_t$,
let $S_* = (\tau_*, b(\tau_*), \delta, F_W)$ denote the
 state of AMP at  convergence.  and predict
 \[
     \MSE(\htheta_*, \theta_0) \approx  \cE((\hat{\vartheta} - \vartheta)^2 | S_*) = \E\Big\{(\sqrt{\delta}\,\tau_*\, Z)^2  \Big\}= \delta \tau_*^2.
 \]
 \eitem 
Other predictions might also be of interest. Thus, concerning the mean absolute error
 $\MAE(\htheta^t,\theta_0) = \| \htheta^t - \theta_0 \|_1/p$, state
 evolution predicts
 $\MAE \approx \sqrt{2 \delta\tau_t^2/\pi}$.
 Concerning functions of $(R,W)$,
 consider the ordinary residuals $Y - X \htheta^*$ at AMP convergence.
 These residuals will of course in general not have
 the distribution of the errors $W$.
 Setting $\eta( z ; b) = z - \dual( z; b)$, we have $Y - X \htheta^* = \eta(R; b_*)$.
  State evolution predicts that the ordinary residuals will
 have the same distribution as $\eta( W + \tau_* Z ; b_*)$.
 
 \subsection{Example of State Evolution predictions}

Continuing with our running example, we again
consider the case of contaminated normal
data  $W \sim \Cnormal(0.05,10)$ and Huber $\rho$ with $\lambda=3$.
If we start AMP with the all-zero estimate $\htheta^0 = 0$,
then since $ \| \theta_0 \|_2 = 6 \sqrt{p}$ we start SE with 
$\tau_0 = 2.056$. Figure \ref{fig:SEPred} presents predictions
by state evolution for the MSE (left panel) and
for the mean absolute error MAE.
\begin{figure}
\begin{center}
\includegraphics[height=3.25in]{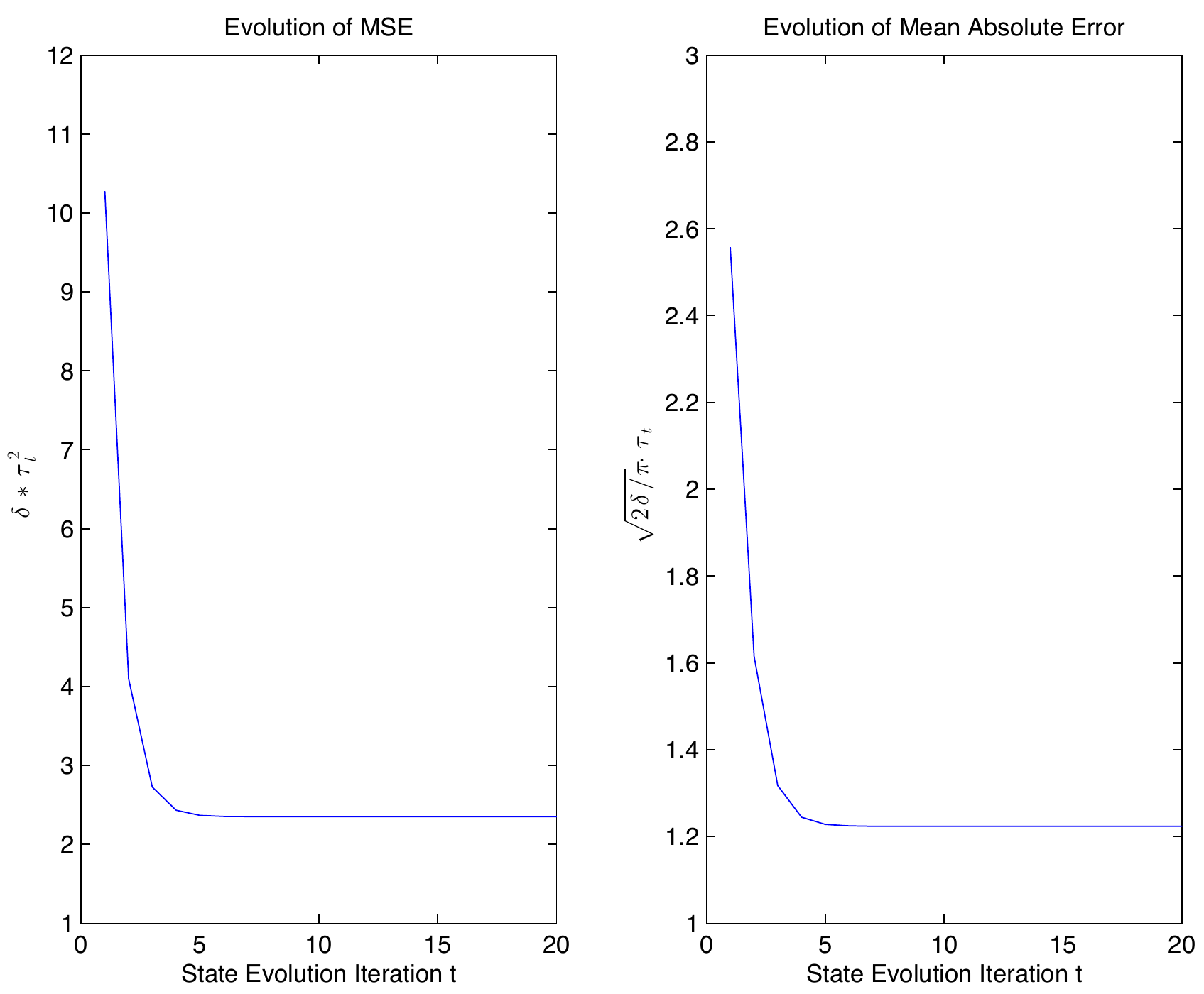}
\caption{State Evolution predictions for $\Cnormal(0.05,10)$, with Huber $\psi$, $\lambda=3$.
Predicted evolutions of two observables of $\htheta^t-\theta_0$:  Left: MSE, Mean Squared Error.
Right: MAE, Mean Absolute Error.
}
\label{fig:SEPred}
\end{center}
\end{figure}

Again in our running example, these predictions can be tested empirically.
For illustration, we conducted a very small experiment,
generating 10 independent realizations of the running model at $n=1000$ and $p=200$, and comparing the actual evolutions
of observables during AMP iterations with the predicted evolutions.
Figure \ref{fig:ObsMeanPred}
shows that the predictions from SE are very close to the averages across realizations. 

\begin{figure}
\begin{center}
\includegraphics[height=3.75in]{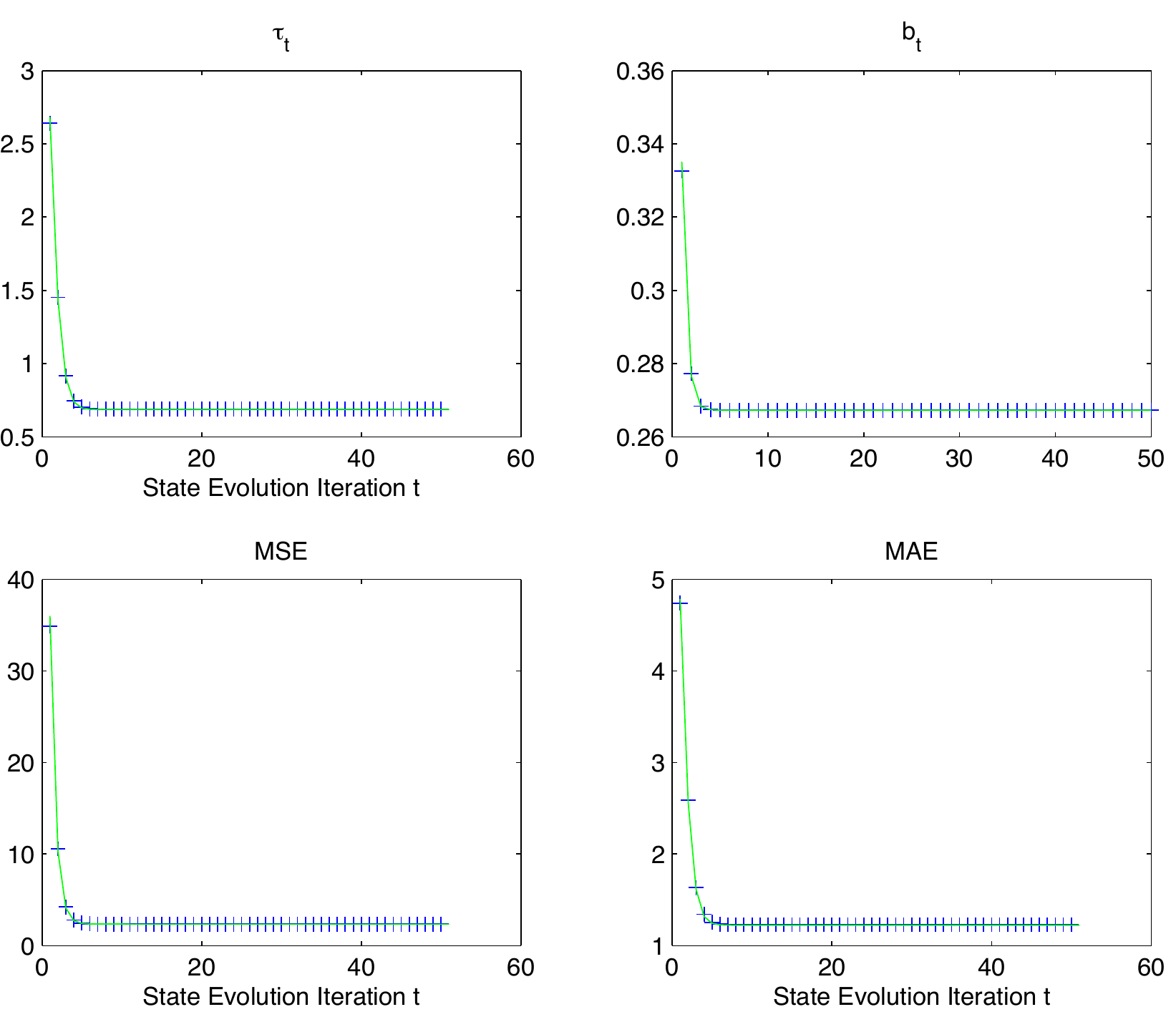}
\caption{Experimental means from 10 simulations
compared with State Evolution predictions under $\Cnormal(0.05,10)$, with Huber $\psi$, $\lambda=3$.
Upper Left: $ \hat{\tau}_t = \| \htheta^t - \theta_0 \|_2/\sqrt{n}$.
Upper Right: $\hat{b}_t$.
Lower Left: MSE, Mean Squared Error.
Lower Right: MAE, Mean Absolute Error.
Blue `+' symbols: Empirical means of AMP observables.
Green Curve: Theoretical predictions by SE.
}
\label{fig:ObsMeanPred}
\end{center}
\end{figure}

\subsection{A lower bound on State Evolution}

State Evolution cannot evolve so that $\tau_t^2 \goto 0$; under minimal regularity, it always exceeds
a specific nonzero noise level. 

\begin{lemma} \label{lem:SELB}
Suppose that $F_W$ has a well-defined Fisher information $I(F_W)$. Then for any 
$t > 0$
\[
     \tau_t^2 \geq \frac{1}{\delta I(F_W)}.
\]
\end{lemma}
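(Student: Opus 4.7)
The plan is to combine the two State Evolution identities with integration by parts and the Cauchy--Schwarz inequality, to obtain a variational lower bound for $\tau_{t+1}^2$ in terms of the Fisher information of the ``effective noise'' $\tilde W_t \equiv W + \tau_t Z$, and then invoke Fisher information monotonicity under Gaussian convolution (Stam/Blachman) to replace $I(\tilde F_{W,t})$ by $I(F_W)$.

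More precisely, for any fixed $t \geq 0$ denote by $\tilde f_t$ the density of $\tilde W_t = W + \tau_t Z$ (with $Z \sim \normal(0,1)$ independent of $W$) and by $\tilde \ell_t = (\log \tilde f_t)'$ its score. Since $\tau_t \geq 0$ and $\tau_0 \geq 0$ can be assumed (otherwise we can work at the next step where $\tau > 0$), the density $\tilde f_t$ is smooth, strictly positive, and bounded, and its Fisher information $I(\tilde F_{W,t}) = \E[\tilde\ell_t(\tilde W_t)^2]$ is finite. I would apply integration by parts to rewrite
\[
\E\bigl\{\dual'(\tilde W_t; b_t)\bigr\} \;=\; -\,\E\bigl\{\dual(\tilde W_t; b_t)\,\tilde\ell_t(\tilde W_t)\bigr\},
\]
then apply Cauchy--Schwarz to obtain
\[
\bigl(\E\{\dual'(\tilde W_t; b_t)\}\bigr)^{2} \;\leq\; \E\bigl\{\dual(\tilde W_t; b_t)^2\bigr\}\cdot I(\tilde F_{W,t}).
\]
By the defining equation \eqref{eq:BtDef} for $b_t = b(\tau_t)$ the left-hand side equals $1/\delta^2$, while by the State Evolution update \eqref{eq:StateEvolution} the first factor on the right equals $\tau_{t+1}^2/\delta$. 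Rearranging gives
\[
\tau_{t+1}^{2} \;\geq\; \frac{1}{\delta\, I(\tilde F_{W,t})}.
\]

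To finish, I would invoke the Stam (equivalently, Blachman) inequality $I(X+Y)^{-1}\ge I(X)^{-1}+I(Y)^{-1}$ applied to $X=W$ and $Y=\tau_t Z$, which implies $I(\tilde F_{W,t}) \leq I(F_W)$ whenever $I(F_W)$ is well defined. Substituting this in the previous display yields $\tau_{t+1}^{2} \geq 1/(\delta I(F_W))$. Re-indexing, this is precisely $\tau_{t}^{2} \geq 1/(\delta I(F_W))$ for every $t\geq 1$, as claimed.

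The only delicate point is the integration by parts step, since $\dual(\,\cdot\,;b_t)$ is only known to be $1$-Lipschitz (so grows at most linearly), not bounded. The boundary terms $[\dual(w;b_t)\,\tilde f_t(w)]_{-\infty}^{\infty}$ must vanish, which follows because $W$ has finite second moment, hence $\tilde W_t$ does, and because $\tilde f_t$ is the convolution $f_W\star \phi_{\tau_t}$ which is uniformly bounded; under these conditions a standard cutoff-and-limit argument (or, equivalently, the variational characterization of Fisher information $I(X)=\sup_g (\E[g'(X)])^2/\E[g(X)^2]$ over smooth compactly supported $g$, followed by passage to a limit using the Lipschitz bound on $\dual$) justifies the identity. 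This regularity issue is the main technical obstacle; everything else is a one-line manipulation of the two SE equations.
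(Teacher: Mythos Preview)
Your proof is correct and follows essentially the same route as the paper: integration by parts to rewrite $\E\Psi'$ as $\E(\Psi\cdot\text{score})$, Cauchy--Schwarz, substitution of the two State Evolution identities, and then Fisher information monotonicity under convolution to pass from $I(\tilde F_{W,t})$ to $I(F_W)$. The only cosmetic difference is that the paper invokes ``convexity and translation-invariance of Fisher information'' where you cite Stam/Blachman, and you treat the boundary-term regularity more carefully than the paper does.
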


\begin{proof}
Let $G = F_W \star N(0,\tau^2)$.  $\E_G \Psi' = \frac{1}{\delta}$ and, if $\xi_G$ denotes the
score function for location of $G$, then $| \E_G \Psi' | = |\E_G \Psi \cdot \xi_G|$ . Meanwhile,
by Cauchy-Schwartz, $ |\E_G \Psi \cdot \xi_G| \leq \sqrt{ \E_G \Psi^2 } \sqrt{\E_G \xi_G^2}$.
We conclude that
\[
    \tcV(\tau)  = \delta \E_G \Psi^2 \geq \delta \frac{|\E_G \Psi \cdot \xi_G|^2}{\E_G \xi_G^2} =  \delta \frac{|\E_G \Psi' |^2}{I(G)} = \frac{1}{\delta I(G)} .
\]
From convexity and translation-invariance of Fisher Information $I(G) = I(F_W \star \normal(0,\tau^2)) <  I(F_W)$.
Then $\tau_t^2 = \tcV(\tau_{t-1}^2) \geq 1/(\delta I(F_W))$.
\end{proof}

We can sharpen this bound one step further. It will be convenient to
write $I(X)$ for the Fisher information of distribution $F_X$.

\begin{lemma}
\[
     I(W+ \tau Z) \leq \frac{I(W)}{1 + \tau^2 I(W)}.
\]
\end{lemma}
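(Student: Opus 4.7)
The plan is to reduce the claim to the classical Blachman–Stam (Stam) Fisher information inequality: for any two independent random variables $X,Y$ with smooth densities,
\[
\frac{1}{I(X+Y)} \;\ge\; \frac{1}{I(X)} + \frac{1}{I(Y)}\, .
\]
Once this is available, we apply it to $X = W$ and $Y = \tau Z$. Since $\tau Z \sim \normal(0,\tau^2)$ has Fisher information $1/\tau^2$, the inequality gives
\[
\frac{1}{I(W+\tau Z)} \;\ge\; \frac{1}{I(W)} + \tau^2 \;=\; \frac{1+\tau^2 I(W)}{I(W)}\, ,
\]
and inverting yields the desired bound. The substantive part of the proof is therefore establishing Stam's inequality; the substitution at the end is immediate.

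To prove Stam's inequality, I would follow the standard convolution-identity route. Let $f_X$, $f_Y$ be the densities and $\xi_X(x) = (\log f_X)'(x)$, $\xi_Y(y)=(\log f_Y)'(y)$ the score functions. Differentiating under the convolution integral (this is where some smoothness/regularity on $f_X, f_Y$ is needed; in our setting $f_Y$ is Gaussian, which ensures $f_{X+Y}$ is as smooth as required regardless of $W$),
\[
\xi_{X+Y}(s) \;=\; \frac{\int f_X'(x)\, f_Y(s-x)\,\de x}{f_{X+Y}(s)} \;=\; \E\bigl[\xi_X(X)\,\bigm|\,X+Y=s\bigr]\, ,
\]
and, by symmetry, $\xi_{X+Y}(s) = \E[\xi_Y(Y)\mid X+Y=s]$. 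Hence for every $\lambda\in[0,1]$,
\[
\xi_{X+Y}(s) \;=\; \lambda\,\E[\xi_X(X)\mid X+Y=s] \;+\; (1-\lambda)\,\E[\xi_Y(Y)\mid X+Y=s]\, .
\]

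Next I would square, integrate, and use conditional Jensen to get $I(X+Y) \le \E\bigl[(\lambda\,\xi_X(X) + (1-\lambda)\,\xi_Y(Y))^2\bigr]$. Since $X$ and $Y$ are independent and each score has mean zero, the cross term vanishes, leaving
\[
I(X+Y)\;\le\;\lambda^2 I(X)+(1-\lambda)^2 I(Y)\, .
\]
Optimizing over $\lambda$ (with $\lambda_* = I(Y)/(I(X)+I(Y))$) gives $I(X+Y)\le I(X)I(Y)/(I(X)+I(Y))$, i.e.\ Stam's inequality.

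The only real technical obstacle is justifying the differentiation under the integral and the vanishing of boundary terms when computing $\E[\xi_X(X)]=0$; this requires $I(W)<\infty$, which is already the standing assumption (indeed $F_W$ has a well-defined Fisher information in Lemma~\ref{lem:SELB}), and smoothness of the Gaussian factor $f_Y$ handles the rest automatically. Everything else is a one-line substitution.
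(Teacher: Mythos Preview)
Your proposal is correct and follows essentially the same route as the paper. The paper cites the Barron--Madiman inequality $I(W+\tau Z)\le x^2 I(W)+(1-x)^2 I(\tau Z)$ and optimizes over $x$; your intermediate bound $I(X+Y)\le \lambda^2 I(X)+(1-\lambda)^2 I(Y)$ is exactly this inequality, and you then carry out the same optimization---the only difference is that you supply a self-contained derivation (Blachman's conditional-score argument) instead of invoking the inequality as a black box.
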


\begin{proof}
Barron and Madiman \cite{BarronMadiman2007} give the inequality $I(W + \tau Z) \leq  x^2 I(W) + (1-x)^2 I(\tau Z)$,
valid for any $x \in (0,1)$. By calculus, we know that for $a,b > 0$,
\[
     \min_{x \in (0,1)}  x^2 a + (1-x)^2 b = \frac{ab}{a+b}.
\]
Setting $a=I(W)$ and $b=I(\tau Z) = \tau^{-2}$, and dividing both numerator and denominator by $b$, we are done.
\end{proof}

Revisit the argument of Lemma \ref{lem:SELB};
the inequality $\tau_{t}^2 \geq 1/(\delta I(F_W))$ shows that if $t > 0$, then 
 $ \tau_{t}^2 I(W) \geq 1/\delta$. Using this in the previous Lemma,
\[
 I(W+ \tau_{t} Z) \leq \frac{I(W)}{1 + \tau_{t}^2 I(W)} \leq \frac{I(W)}{1 + \frac{1}{\delta}}.
\]
This yields a `one-step' improvement:
\begin{coro} \label{coro:SE1SLB}
Suppose that $F_W$ has a well-defined Fisher Information $I(F_W)$. Then for any 
$t > 1$
\[
     \tau_t^2 \geq \frac{1 + \frac{1}{\delta}}{\delta I(F_W)}.
\]
\end{coro}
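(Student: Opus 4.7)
The plan is to bootstrap the one-step lower bound of Lemma \ref{lem:SELB} by composing it with the convex Fisher-information inequality from the preceding lemma. The idea is that Lemma \ref{lem:SELB} already guarantees $\tau_{t-1}^2 \ge 1/(\delta I(F_W))$ whenever $t-1>0$; this in turn makes the extra-Gaussian noise at step $t-1$ large enough that the Fisher information of the convolved law $F_W\star\normal(0,\tau_{t-1}^2)$ is strictly smaller than $I(F_W)$ by a quantitative amount, and plugging that into the same state-evolution inequality used in Lemma \ref{lem:SELB} produces exactly the claimed improvement by the factor $1+1/\delta$.

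More concretely, I would first isolate from the proof of Lemma \ref{lem:SELB} the intermediate inequality
\[
\tau_t^2 \;=\; \tcV(\tau_{t-1}^2) \;\ge\; \frac{1}{\delta\, I(G_{t-1})}, \qquad G_{t-1} \equiv F_W \star \normal(0,\tau_{t-1}^2),
\]
which follows from Cauchy--Schwartz applied to $\E_{G_{t-1}}\Psi'=\E_{G_{t-1}}\Psi\cdot\xi_{G_{t-1}}$. This is the only state-evolution input we need; everything else is a bound on $I(G_{t-1})$.

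Next, since $t>1$ we may apply Lemma \ref{lem:SELB} at step $t-1$ to conclude $\tau_{t-1}^2 \ge 1/(\delta I(F_W))$, i.e.\ $\tau_{t-1}^2 I(F_W) \ge 1/\delta$. Feeding this into the preceding lemma's inequality $I(W+\tau Z)\le I(W)/(1+\tau^2 I(W))$ with $\tau = \tau_{t-1}$ yields
\[
I(G_{t-1}) \;\le\; \frac{I(F_W)}{1+\tau_{t-1}^2 I(F_W)} \;\le\; \frac{I(F_W)}{1+\tfrac{1}{\delta}},
\]
where in the last step we used that the map $x\mapsto x/(1+\tau^2 x)$ is increasing. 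Substituting this upper bound on $I(G_{t-1})$ back into the state-evolution inequality above gives
\[
\tau_t^2 \;\ge\; \frac{1+\tfrac{1}{\delta}}{\delta\, I(F_W)},
\]
which is the corollary.

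There is no real obstacle; the argument is a two-line composition of already-proved facts. The only point of care is tracking the index $t-1$ and verifying that the hypothesis of Lemma \ref{lem:SELB} is satisfied there (which is exactly why we restrict to $t>1$ rather than $t>0$), and noting that the monotonicity used in the final chain of inequalities is elementary.
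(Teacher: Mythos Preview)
Your proposal is correct and follows essentially the same route as the paper: extract the intermediate inequality $\tau_t^2 \ge 1/(\delta I(G_{t-1}))$ from the proof of Lemma~\ref{lem:SELB}, apply that lemma at index $t-1$ to get $\tau_{t-1}^2 I(F_W)\ge 1/\delta$, and then feed this into the Fisher-information convolution bound to control $I(G_{t-1})$. The only quibble is your stated reason for the last inequality: the relevant monotonicity is simply that $y\mapsto I(F_W)/(1+y)$ is decreasing, so $\tau_{t-1}^2 I(F_W)\ge 1/\delta$ makes the denominator larger; the map $x\mapsto x/(1+\tau^2 x)$ is not what is being used there.
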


We can iterate this argument across many steps, obtaining that, for every $t > k$,
\[
    \tau_t^2 \geq \frac{1 + \frac{1}{\delta} + \frac{1}{\delta^2} + \dots +  \frac{1}{\delta^k}}{\delta I(F_W)}.
\]
We obtain immediately:
\begin{coro} \label{coro:SEInftyLB}
Suppose that $F_W$ has a well-defined Fisher information $I(F_W)$. Then for every accumulation point  $\tau_*$ of
State Evolution
\[
     \tau_*^2 \geq \frac{1}{\delta -1} \cdot \frac{1}{ I(F_W)}.
\]
\end{coro}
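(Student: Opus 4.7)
The plan is to iterate the one-step improvement of Corollary~\ref{coro:SE1SLB} by induction on $k$, producing, for every $t > k$, the bound
\[
\tau_t^2 \;\geq\; \frac{1}{\delta\, I(F_W)}\sum_{j=0}^{k} \delta^{-j},
\]
and then to send $k\to\infty$, exploiting the standing assumption $\delta>1$ to sum the resulting geometric series, before finally passing to the limit along any accumulation point of $\{\tau_t\}_{t\geq 0}$.

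For the induction, the base cases $k=0$ and $k=1$ are exactly Lemma~\ref{lem:SELB} and Corollary~\ref{coro:SE1SLB}. For the inductive step, I would combine the intermediate inequality $\tcV(\tau^2) \geq 1/(\delta\, I(F_W \star \normal(0,\tau^2)))$ (extracted from the proof of Lemma~\ref{lem:SELB}) with the Barron--Madiman bound $I(W + \tau_{t-1} Z) \leq I(F_W)/(1 + \tau_{t-1}^2\, I(F_W))$ of the lemma immediately preceding the corollary, obtaining
\[
\tau_t^2 \;=\; \tcV(\tau_{t-1}^2) \;\geq\; \frac{1 + \tau_{t-1}^2\, I(F_W)}{\delta\, I(F_W)} \;=\; \frac{1}{\delta\, I(F_W)} + \frac{\tau_{t-1}^2}{\delta}.
\]
Substituting the inductive hypothesis $\tau_{t-1}^2 \geq \sum_{j=0}^{k-1}\delta^{-j}/(\delta\, I(F_W))$ (which holds for $t-1>k-1$, i.e.\ $t>k$) and using the elementary identity $1 + \delta^{-1}\sum_{j=0}^{k-1}\delta^{-j} = \sum_{j=0}^{k}\delta^{-j}$ closes the induction.

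Because $\delta > 1$, the partial sums $\sum_{j=0}^{k}\delta^{-j}$ converge monotonically to $\delta/(\delta-1)$ as $k\to\infty$. Now let $\tau_*$ be any accumulation point of $\{\tau_t\}$. For each fixed $k$, the inductive bound holds for every $t > k$, hence along a subsequence converging to $\tau_*$, so $\tau_*^2 \geq (\sum_{j=0}^{k}\delta^{-j})/(\delta\, I(F_W))$. Letting $k\to\infty$ yields
\[
\tau_*^2 \;\geq\; \frac{\delta/(\delta-1)}{\delta\, I(F_W)} \;=\; \frac{1}{(\delta-1)\, I(F_W)},
\]
as claimed.

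There is no substantive obstacle beyond bookkeeping; the entire corollary formalizes the author's remark that the single-step improvement may be repeated. The two details worth flagging are: (i) the bound at level $k$ must be phrased as "valid for every $t>k$" rather than for one distinguished iterate, since this is what permits the final step to an accumulation point (as opposed to a presumed unique limit) without any additional contractivity hypothesis on $\tcV$; and (ii) the necessity of $\delta>1$, which is the paper's standing assumption but is essential here, since otherwise the geometric series diverges and no finite lower bound emerges.
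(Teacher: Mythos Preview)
Your proposal is correct and follows essentially the same route as the paper: the paper states the inductive bound $\tau_t^2 \ge \big(\sum_{j=0}^{k}\delta^{-j}\big)/(\delta\,I(F_W))$ for all $t>k$ just before the corollary, and your argument simply fills in the induction via the recursion $\tau_t^2 \ge (\delta I(F_W))^{-1} + \tau_{t-1}^2/\delta$, then sums the geometric series and passes to an accumulation point exactly as intended. Your explicit handling of the accumulation-point step and the remark that $\delta>1$ is essential are both apt and make the argument slightly more careful than the paper's one-line ``We obtain immediately.''
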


\subsection{Correctness of State Evolution predictions}

The predictions of state evolution can be validated in the large-system limit $n,p \goto \infty$,
under the random Gaussian design assumption of Definition \ref{def-GaussDesign}.
We impose regularity conditions on 
the observables whose behavior we attempt to predict:
\begin{definition}
A function $\xi:\reals^k\to\reals$ is \emph{pseudo-Lipschitz} if there
exists $L<\infty$ such that, for all $x,y\in\reals^k$,
$|\xi(x)-\xi(y)|\le L(1+\|x\|_2+\|y\|_2)\,\|x-y\|_2$.
\end{definition}
In particular, $\xi(x) = x^2$ is pseudo-Lipschitz.

Recall also the definition of MSE in equation
(\ref{eq:defMSE}). For a sequence of  estimators $\tilde{\theta}$,
define the per-coordinate asymptotic mean squared error (AMSE)
as the following large-system limit:
\begin{align}
\AMSE(\tilde{\theta};\theta_0) =_{{\rm a.s.}}  \lim_{n,p_n \goto \infty} \MSE(\tilde{\theta};\theta_0),
\end{align}
when the indicated limit exists.

The following result validates the predictions of State Evolution
for pseudo-Lipschitz observables. Our proof is deferred to 
Appendix \ref{app:StateEvolution}. 
\begin{thm}\label{thm:StateEvolution}
Assume that the loss function $\rho$ is convex and smooth, that the
sequence of matrices $\{\bX(n)\}_{n}$ is a standard Gaussian design,
and that $\theta_{0}$, $\htheta^0$ are deterministic sequences such that 
$\AMSE(\theta_{0}$, $\htheta^0)  = \delta\tau_0^2$.
 Further assume that $F_W$ has
finite second moment and let $\{\tau_t^2\}_{t\ge 0}$ be the state
evolution sequence with initial condition $\tau_0^2$. Let
$\{\htheta^t,\res^t\}_{t\ge 0}$ be the AMP trajectory with parameters
$b_t$ as per Eq.~(\ref{eq:BtDef}).

Let $\test:\reals\to \reals$, $\test_2:\reals\times\reals\to \reals$
be   pseudo-Lipschitz functions.
Then, for any $t>0$,  we have, for
$Z\sim\normal(0,1)$
independent of $W\sim F_W$
\begin{align}
\lim_{n\to\infty}\frac{1}{p}\sum_{i=1}^p\test(\htheta_i^t-\theta_{0,i})
=_{a.s.} \,  &\E\Big\{\test(\sqrt{\delta}\,\tau_t\, Z)\Big\}\, ,\\
\lim_{n\to\infty}\frac{1}{n}\sum_{i=1}^n\test_2(\res^t_i,W_i)
=_{a.s.} \, & \E\Big\{\test_2(W+\tau_t\, Z,W)\Big\}\,.
\end{align} 
\end{thm}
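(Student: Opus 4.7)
The plan is to reduce the AMP recursion to the general asymmetric form to which the abstract state evolution theorem of Bayati--Montanari \cite{BM-MPCS-2011} applies. First I would introduce the error variables $h^t := \theta_0 - \htheta^t \in \reals^p$ and $r^t := \res^t - W \in \reals^n$. Substituting $Y = \bX\theta_0 + W$ into (\ref{eq:AMP1})--(\ref{eq:AMP2}) yields
\begin{align*}
 r^t &= \bX h^t + \dual(r^{t-1} + W; b_{t-1}),\\
 h^{t+1} &= h^t - \delta\,\bX^{\sT}\, \dual(r^t + W; b_t),
\end{align*}
a two-sided iteration with separable nonlinearity $g_t(r,w) := -\delta\,\dual(r+w; b_t)$ on the residual side and the identity on the signal side. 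The bound $\dual'(\cdot;b)\in (0,1)$ from Appendix \ref{app:Properties} supplies the Lipschitz regularity required by \cite{BM-MPCS-2011}. Moreover, the coefficients of the memory terms match the Onsager prescription of \cite{BM-MPCS-2011} precisely because of the defining choice of $b_t$: the signal-side Onsager coefficient equals $-\delta\,\langle\dual'(r^t+W;b_t)\rangle$, which collapses to $-1$ exactly when (\ref{eq:AMPb}) holds, while the residual-side coefficient $p/n = 1/\delta$ is automatic.

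The main obstacle is that $b_t$ depends implicitly on the instance $(\bX,Y)$ through (\ref{eq:AMPb}), whereas \cite{BM-MPCS-2011} demands nonlinearities that are deterministic at each step. Following the footnote after (\ref{eq:AMPb}), I would introduce a \emph{reference iteration} $(\tilde\htheta^t,\tilde\res^t)$ identical to (\ref{eq:AMP1})--(\ref{eq:AMP2}) except that the data-dependent $b_t$ is replaced by the deterministic sequence $\bar b_t := b(\tau_t;\delta,F_W)$ obtained from the state evolution via (\ref{eq:BtDef}). Since $\bar b_t$ is non-random, the reference iteration fits the Bayati--Montanari framework exactly, and its main theorem delivers the two almost-sure limits claimed in Theorem \ref{thm:StateEvolution}, with $(\tilde\htheta^t,\tilde\res^t)$ in place of $(\htheta^t,\res^t)$.

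It then remains to transfer the conclusion from the reference to the original iteration, by a simultaneous induction on $t$ establishing
\[
 \tfrac{1}{p}\|\htheta^t - \tilde\htheta^t\|_2^2 \to 0, \qquad \tfrac{1}{n}\|\res^t - \tilde\res^t\|_2^2 \to 0, \qquad b_t \to \bar b_t \qquad \text{a.s.}
\]
The base case is immediate. For the inductive step, I would apply the state evolution conclusion for the reference iteration---possibly after approximating the possibly-discontinuous $\dual'(\cdot;\bar b_t)$ by a family of Lipschitz functions (which is harmless since $\dual'$ is bounded and the limiting distribution $W+\tau_t Z$ is absolutely continuous)---to obtain $\frac{1}{n}\sum_i \dual'(\tilde\res^t_i;\bar b_t) \to 1/\delta$. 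Strict monotonicity of $b\mapsto \E\dual'(W+\tau_t Z;b)$ at $\bar b_t$ (Lemma \ref{lemma:BtDef}) combined with an implicit-function argument on (\ref{eq:AMPb}) then gives $b_t \to \bar b_t$, using the inductive closeness $\frac{1}{n}\|\res^t - \tilde\res^t\|_2^2 \to 0$. Lipschitz continuity of $(r,b)\mapsto\dual(r;b)$ propagates this closeness through (\ref{eq:AMP1})--(\ref{eq:AMP2}) to step $t+1$. Finally, by the pseudo-Lipschitz property and Cauchy--Schwarz, the almost-sure limits transfer from $(\tilde\htheta^t,\tilde\res^t)$ to $(\htheta^t,\res^t)$, yielding the theorem. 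The delicate step is the joint induction: the convergence $b_t \to \bar b_t$ and the iterate closeness must be proved in tandem, with Lipschitz propagation keeping the finite-horizon constants under control.
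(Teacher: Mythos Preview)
There is a genuine gap, and it stems from a misidentification of which recursion fits the Bayati--Montanari framework.

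First, a minor point: the theorem as stated concerns the AMP trajectory with the \emph{deterministic} $b_t$ of (\ref{eq:BtDef}), not the data-dependent $b_t$ of (\ref{eq:AMPb}); so your ``reference iteration'' with $\bar b_t$ \emph{is} the iteration the theorem is about, and the transfer step you describe at the end would establish a further result rather than the theorem itself.

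The substantive gap is your claim that this reference iteration ``fits the Bayati--Montanari framework exactly.'' It does not. In the recentered variables, the signal-side update reads $h^{t+1} = h^t - \delta\,\bX^{\sT}\dual(r^t+W;\bar b_t)$, so the coefficient in front of $h^t$ is exactly $1$. But the Onsager prescription of \cite{BM-MPCS-2011} requires that coefficient to be the empirical average $\delta\cdot\frac{1}{n}\sum_i\dual'(r^t_i+W_i;\bar b_t)$. You correctly observed that these coincide \emph{when} $b_t$ is chosen via the data-dependent rule (\ref{eq:AMPb}); with the deterministic $\bar b_t$ they agree only in the $n\to\infty$ limit, not identically, and that is not enough to invoke \cite{BM-MPCS-2011} directly. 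Thus neither of your two iterations fits the framework: the original has random nonlinearities, and the reference has a mismatched Onsager term.

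The paper's resolution is to introduce a \emph{third} recursion $(\vartheta^t,S^t)$, Eqs.~(\ref{eq:Simplified1})--(\ref{eq:Simplified2}), that uses the deterministic $b_t$ from (\ref{eq:BtDef}) \emph{and} carries the data-dependent Onsager coefficient $q_t = \delta\cdot\frac{1}{n}\sum_i\dual'(W_i+S^t_i;b_t)$. This recursion fits \cite{BM-MPCS-2011} exactly (Lemma~\ref{lemma:ApplyingSE}). State evolution for it then yields $q_t\to 1$ almost surely---because the deterministic $b_t$ was defined precisely so that $\E\{\dual'(W+\tau_t Z;b_t)\}=1/\delta$---and a Gr\"onwall-type bound propagates $|q_\ell-1|\to 0$ into $\frac{1}{p}\|\htheta^{{\rm cen},t}-\vartheta^t\|_2^2\to 0$ (Lemma~\ref{lemma:SimplifyingIteration}). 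Your inductive transfer argument has the right flavor, but the auxiliary object to which \cite{BM-MPCS-2011} applies must carry the empirical Onsager coefficient, not the constant $1$.
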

%
%
In particular, we may take $\xi(x) = x^2$ and obtain  for the AMP iteration
\[
      \AMSE(\htheta^t,\theta_0) = \delta \tau_t^2,
\]
in full agreement with the predictions of state evolution in Definition \ref{def:SEFormalism}.

\section{Convergence and characterization of M-estimators}

The key step for characterizing the distribution of the M-estimator
$\htheta$, cf. Eq.~(\ref{eq:Mestimation}), is to prove that the AMP
iterates $\htheta^t$
converge to $\htheta$. We will prove that this is indeed the case, at
least in the limit $n,p\to\infty$, and for suitable initial
conditions\footnote{We expect convergence for arbitrary initial conditions
(as long as they are independent of $(W,\bX)$), but proving this claim
is not needed for our main goal,  and we leave it for future study.
Proving this claim would require showing convergence of the state evolution
recursion (\ref{eq:StateEvolution}).}.

Throughout this section, we shall assume that $\rho$ is \emph{strongly
convex}, i.e. that $\inf_{x\in\reals}\rho''(x)>0$. 
This corresponds to assuming $\inf_{x\in\reals}\psi'(x)>0$,
which is rather natural from the point of view of robust statistics
since it ensures uniqueness of the M estimator\footnote{The Huber estimator is not covered by the result of this section;
although we expect our
approach to apply in such generality. We focus here on the strongly
convex case to avoid un-necessary complications.}.

The key step is to establish the following high-dimensional
convergence result.
\begin{thm}\label{thm:Convergence}
 ({\bf Convergence of AMP to the M-Estimator.})
Assume the same setting as in Theorem \ref{thm:StateEvolution}, and
further assume that $\rho$ is strongly convex and  that $\delta>1$.

Let $(\tau_*,b_*)$ be a solution of the two equations
\begin{align}
\tau^2 &= \delta \; \E\Big\{\dual(W+\tau\, Z;b)^2\Big\}\, ,\label{eq:FixedPoint1}\\
\frac{1}{\delta} &= \E\Big\{\dual'(W+\tau\,Z;b)\Big\}\, . \label{eq:FixedPoint2}
\end{align}
and assume that $\AMSE(\htheta^0,\theta_0)= \delta\tau_*^2$. Then
\begin{align}
\lim_{t\to\infty}
\AMSE(\htheta^t,\htheta) = 0\, .
\end{align}
\end{thm}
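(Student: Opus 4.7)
The overall plan is three-fold: a multi-time extension of state evolution that controls cross-iteration correlations, a contraction argument showing consecutive AMP iterates become asymptotically indistinguishable, and a strong-convexity argument translating approximate stationarity of $\cL$ into proximity to $\htheta$.

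\textbf{Step 1 (Multi-time state evolution).} First I would extend Theorem~\ref{thm:StateEvolution} to a multi-time statement: for any pair $s,t\ge 0$, the normalized inner products $\tfrac{1}{p}\langle \htheta^s-\theta_0,\htheta^t-\theta_0\rangle$ and $\tfrac{1}{n}\langle \res^s-W,\res^t-W\rangle$ converge almost surely, as $n\to\infty$, to deterministic quantities $\delta\sigma_{s,t}$ and $\kappa_{s,t}$ satisfying a bivariate recursion obtained by taking joint-Gaussian expectations involving $\dual$ and $\dual'$. This is a direct transcription of the conditioning technique of \cite{BM-MPCS-2011,BayatiMontanariLASSO} and requires no new ingredient beyond what already underlies Theorem~\ref{thm:StateEvolution}. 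Because of the initialization $\AMSE(\htheta^0,\theta_0)=\delta\tau_*^2$, the diagonal is stationary: $\sigma_{t,t}=\tau_*^2$ and $b_t\to b_*$ for all $t$.

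\textbf{Step 2 (Cauchy property along the trajectory).} Next I would show $\sigma_{s,t}\to \tau_*^2$ as $s,t\to\infty$, which is equivalent to $\lim_{t\to\infty}\lim_{n\to\infty}\tfrac{1}{p}\|\htheta^{t+1}-\htheta^t\|_2^2=0$ and the analogous statement for the residuals. The cross-term recursion is governed by Gaussian expectations of products of $\dual$ evaluated at correlated Gaussians with common marginal variance $\tau_*^2$. A Cauchy--Schwarz estimate, combined with the contraction property $\dual'(\,\cdot\,;b_*)\in(0,1)$ and the strict inequality $\tcV'(\tau_*^2)<1$ at the stable fixed point, should yield a strict contraction of the bivariate map around the diagonal, and hence geometric decay of the cross-iteration discrepancies. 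This is the principal technical obstacle, since one must handle the implicit coupling through the iteration-dependent parameter $b_t$ defined by equation~(\ref{eq:BtDef}) and verify that the contractivity survives the rescaling by $\delta$ at each step.

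\textbf{Step 3 (Near-stationarity and strong convexity).} From the AMP scoring step $\htheta^{t+1}-\htheta^t=\delta\,\bX^\sT\dual(\res^t;b_t)$, Step 2 gives $\|\bX^\sT\dual(\res^t;b_t)\|_2^2/p\to 0$. Combining the residual update $\res^t-\dual(\res^{t-1};b_{t-1})=Y-\bX\htheta^t$ with the proximal identity $\dual(r;b)=b\,\rho'(r-\dual(r;b))$ from Proposition~\ref{propo:Phi}, and using the Cauchy property for the residuals along with $b_t\to b_*$, one concludes that $\dual(\res^t;b_t)$ is asymptotically equal to $b_*\,\rho'(Y-\bX\htheta^t)$, so $\|\nabla\cL(\htheta^t)\|_2^2/p\to 0$ in the iterated limit. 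Finally, under the Gaussian design with $\delta>1$, the Marchenko--Pastur law gives $\sigma_{\min}(\bX^\sT\bX)\ge(1-1/\sqrt{\delta})^2-o_n(1)$ almost surely, which together with $\inf_x\rho''(x)>0$ yields strong convexity of $\cL$ with a deterministic modulus $\mu>0$. The resulting gradient-based bound $\|\nabla\cL(\theta)\|_2\ge \mu\|\theta-\htheta\|_2$ then gives $\tfrac{1}{p}\|\htheta^t-\htheta\|_2^2\le \tfrac{1}{\mu^2 p}\|\nabla\cL(\htheta^t)\|_2^2\to 0$, which is the desired conclusion.
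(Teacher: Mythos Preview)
Your three-step outline matches the paper's proof exactly: multi-time state evolution (the paper's Lemma~\ref{lemma:TwoTimesSE}), convergence of the cross-iteration correlations to $\tau_*^2$ (Lemma~\ref{lemma:CorrBehavior}), and the strong-convexity/gradient bound.

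Two clarifications on Step~2 where your sketch is slightly off. First, because the initialization satisfies $\AMSE(\htheta^0,\theta_0)=\delta\tau_*^2$, state evolution gives $\tau_t=\tau_*$ and hence $b_t=b_*$ \emph{exactly} for every $t$; there is no ``iteration-dependent $b_t$'' to contend with, and this is what makes Step~2 tractable. Second, the relevant contraction is not governed by $\tcV'(\tau_*^2)$ (the one-time variance map) but by the normalized correlation map
\[
q\;\longmapsto\;\sH(q)\;=\;\frac{\delta}{\tau_*^2}\,\E_q\big\{\dual(W+\tau_* Z_1;b_*)\,\dual(W+\tau_* Z_2;b_*)\big\},
\]
with $(Z_1,Z_2)$ standard jointly Gaussian of correlation $q$, independent of $W$. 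The paper shows $\sH(1)=1$, that $\sH$ is increasing and strictly convex on $[0,1]$ (via a Hermite/Ornstein--Uhlenbeck spectral expansion of the conditional kernel), and that
\[
\sH'(1)\;=\;\delta\,\E\big\{\dual'(W+\tau_* Z;b_*)^2\big\}\;\le\;\delta\,\E\big\{\dual'(W+\tau_* Z;b_*)\big\}\;=\;1,
\]
using exactly your observation $\dual'\in(0,1)$. These three facts force $q_t=\Gamma_{t,t+1}/\tau_*^2\to 1$, which is the Cauchy property you want. Step~3 then proceeds just as you describe.
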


From this and Theorem \ref{thm:StateEvolution}, the desired characterization of $\htheta$ immediately follows.

To tie back to the introduction, we
prove formula (\ref{HDAVarCov}):
\begin{coro}\label{coro:AysVariance} ({\bf Asymptotic Variance Formula under High-Dimensional Asymptotics.})
Assume the  setting  of Theorem \ref{thm:StateEvolution}, and
further assume that $\rho$ is strongly convex and $\delta>1$. 
The asymptotic variance of $\htheta$ obeys
\begin{align}
\lim_{n,p \to \infty} {\rm Ave}_{i\in[p]}\Var(\htheta_i) =_{{\rm a.s}} V(\tilde{\Psi},\tilde{F}), 
\end{align}
where ${\rm Ave}_{i\in[p]}$ denotes the average across indices $i$, $V(\psi,F)$ denotes the usual Huber asymptotic variance formula for M-estimates --
$V(\psi,F) = (\int \psi^2 \de F)/(\int \psi' \de F)^2$ --
 and the effective score $\tilde{\Psi}$ is
\[
      \tilde{\Psi}(\,\cdot\,) = \Psi(\,\cdot\, ; b_*) ,
\]
while the effective noise distribution $\tilde{F}$ is
\[
     \tilde{F} = F_W \star \normal(0,{\tau}_*^2).
\]
Here $(\tau_*,b_*)$ are the unique
solutions of the equations (\ref{eq:FixedPoint1})-(\ref{eq:FixedPoint2}).
\end{coro}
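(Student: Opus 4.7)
The plan is to reduce the corollary to an elementary manipulation of the fixed-point equations (\ref{eq:FixedPoint1})--(\ref{eq:FixedPoint2}) by chaining Theorems \ref{thm:StateEvolution} and \ref{thm:Convergence}. First I would take a deterministic initialization $\htheta^0$ (independent of $(W,\bX)$) with $\AMSE(\htheta^0,\theta_0) = \delta\tau_*^2$; this is trivial, e.g.\ $\htheta^0 = \theta_0 + v_n$ for any $v_n\in\reals^p$ with $\|v_n\|_2^2/p \to \delta\tau_*^2$. Because $(\tau_*,b_*)$ solves (\ref{eq:FixedPoint1})--(\ref{eq:FixedPoint2}), $\tau_*^2$ is a fixed point of the state-evolution map $\tcV$, so this initialization freezes the state-evolution sequence at $\tau_t=\tau_*$ for every $t\ge 0$. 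Applying Theorem \ref{thm:StateEvolution} to the pseudo-Lipschitz test function $\xi(x)=x^2$ then yields, for each fixed $t$,
\[
\lim_{n\to\infty}\AMSE(\htheta^t,\theta_0) =_{\text{a.s.}} \delta\tau_*^2 .
\]

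Next I would transfer this from the AMP iterate to the M-estimator using Theorem \ref{thm:Convergence}, which asserts $\lim_{t\to\infty}\AMSE(\htheta^t,\htheta)=0$. The $\ell_2$ triangle inequality
\[
\bigl|\,\sqrt{\AMSE(\htheta,\theta_0)}-\sqrt{\AMSE(\htheta^t,\theta_0)}\,\bigr| \le \sqrt{\AMSE(\htheta^t,\htheta)} ,
\]
combined with a standard $\limsup/\liminf$ sandwich on the countable intersection (over integer $t$) of the almost-sure events provided by the two theorems, then delivers
\[
\lim_{n\to\infty}\AMSE(\htheta,\theta_0) =_{\text{a.s.}} \delta\tau_*^2 .
\]

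Finally I would identify $\delta\tau_*^2$ with $V(\tilde{\Psi},\tilde F)$ by direct substitution into Huber's formula. With $\tilde{\Psi}(\,\cdot\,)=\dual(\,\cdot\,;b_*)$ and $\tilde F = F_W\star\normal(0,\tau_*^2)$, the fixed-point equations read
\[
\int \tilde{\Psi}^2\,\de\tilde F = \E\bigl\{\dual(W+\tau_*Z;b_*)^2\bigr\} = \frac{\tau_*^2}{\delta},\qquad
\int \tilde{\Psi}'\,\de\tilde F = \E\bigl\{\dual'(W+\tau_*Z;b_*)\bigr\} = \frac{1}{\delta},
\]
so $V(\tilde{\Psi},\tilde F) = (\tau_*^2/\delta)/(1/\delta)^2 = \delta\tau_*^2$, matching the previous display. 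A short additional remark identifies $\lim_{n,p}\,\text{Ave}_{i\in[p]}(\htheta_i-\theta_{0,i})^2$ with $\lim_{n,p}\,\text{Ave}_{i\in[p]}\Var(\htheta_i)$: under the rotationally invariant Gaussian design, $\htheta$ is asymptotically unbiased coordinate-wise, so the two limits coincide.

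The main obstacle is the limit interchange at the second step: Theorem \ref{thm:StateEvolution} is a statement with $t$ fixed as $n\to\infty$, whereas Theorem \ref{thm:Convergence} controls $\AMSE(\htheta^t,\htheta)$ only after $n\to\infty$ has already been performed. The triangle-inequality sandwich makes this rigorous, but one must fix a single full-probability event by intersecting the countable family of almost-sure events indexed by $t\in\naturals$ before sending $t\to\infty$. Everything else is either a consequence of the two prior theorems or elementary algebra using the defining equations of $(\tau_*,b_*)$.
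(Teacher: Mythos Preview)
Your proposal is correct and follows essentially the same route as the paper: combine Theorems \ref{thm:StateEvolution} and \ref{thm:Convergence} to obtain $\AMSE(\htheta,\theta_0)=\delta\tau_*^2$, then plug the fixed-point equations (\ref{eq:FixedPoint1})--(\ref{eq:FixedPoint2}) into Huber's formula to get $V(\tilde\Psi,\tilde F)=\delta\tau_*^2$. The paper's proof is three lines and leaves the limit interchange implicit; your triangle-inequality/countable-intersection argument is exactly the right way to make that step precise, and your identification of the average variance with the MSE via unbiasedness is what the paper compresses into the single word ``symmetry.''
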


\begin{proof}
By symmetry, ${\rm Ave}_{i\in [p]} \Var(\htheta_i) = \E \MSE(\htheta,\theta_0)$.
Theorem \ref{thm:Convergence} and State Evolution show that $\AMSE(\htheta,\theta_0) = \delta \tau_*^2$.
By (\ref{eq:FixedPoint1})-(\ref{eq:FixedPoint2})
\[
V(\tilde{\Psi},\tilde{F}) =  \frac{\E \Psi^2(W + \tau_*Z;b_*)}{[\E \Psi'(W + \tau_* Z; b_*)]^2} = \frac{ \tau_*^2/\delta}{\delta^{-2}}   = \delta \tau_*^2.
\]
\end{proof}

Recall that the traditional information bound for M-estimators is $V(\Psi,F) \geq \frac{1}{I(F_W)}$,
and that this is achievable under $p$ fixed, $n \goto \infty$ asymptotics.
Considering the formula for  $\tilde{F}$ we see that 
because $\tau_* > 0$, such an asymptotic variance is
not achievable under high-dimensional asymptotics.
We now make this effect more visible.
Combining Corollary \ref{coro:AysVariance} with Corollary \ref{coro:SEInftyLB}'s
lower bound on the equilibrium noise $\tau_*$ reachable by State Evolution, 
we have the following.

\begin{coro} {\bf Information Bound under High-Dimensional Asymptotics}:
\[
     V(\tilde{\Psi},\tilde{F}) \geq  \frac{1}{1-1/\delta} \cdot  \frac{1}{I(F_W)} .
\]
\end{coro}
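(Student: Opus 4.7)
The plan is to obtain this bound as an immediate consequence of the two corollaries that directly precede it in the text, with no additional ingredients required. Corollary \ref{coro:AysVariance} already identifies the quantity on the left-hand side through the chain $V(\tilde{\Psi},\tilde{F}) = \delta\tau_*^2$, where $\tau_*$ is the equilibrium noise level picked out by Theorem \ref{thm:Convergence} and the fixed-point equations (\ref{eq:FixedPoint1})--(\ref{eq:FixedPoint2}). Corollary \ref{coro:SEInftyLB}, which needs only that $F_W$ possess a well-defined Fisher information $I(F_W)$, then furnishes the lower bound $\tau_*^2 \geq \frac{1}{(\delta-1)\,I(F_W)}$ for any accumulation point of the state-evolution recursion.

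With both pieces already in hand, the proof consists of substituting the second inequality into the first and simplifying:
\[
V(\tilde{\Psi},\tilde{F}) \;=\; \delta\,\tau_*^2 \;\geq\; \delta \cdot \frac{1}{(\delta-1)\,I(F_W)} \;=\; \frac{1}{1 - 1/\delta}\cdot \frac{1}{I(F_W)}.
\]
The assumptions required are exactly those carried over from Corollary \ref{coro:AysVariance} (smooth strongly convex $\rho$, Gaussian design, $\delta > 1$) together with the Fisher-information hypothesis of Corollary \ref{coro:SEInftyLB}.

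There is no real obstacle at this step. The only point that calls for a line of justification is that the $\tau_*$ appearing in the asymptotic variance formula genuinely falls under the scope of Corollary \ref{coro:SEInftyLB}: since $\tau_*$ solves the fixed-point equation $\tau_*^2 = \tcV(\tau_*^2)$, it is trivially an accumulation point of the state-evolution sequence (take the constant trajectory $\tau_t \equiv \tau_*$), so the lower bound applies. Everything else is arithmetic.
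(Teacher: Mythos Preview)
Your proposal is correct and matches the paper's own argument: the paper states the corollary as an immediate combination of Corollary~\ref{coro:AysVariance} (giving $V(\tilde{\Psi},\tilde{F})=\delta\tau_*^2$) with Corollary~\ref{coro:SEInftyLB} (giving $\tau_*^2\ge 1/((\delta-1)I(F_W))$), followed by the same one-line arithmetic you wrote. Your remark that the fixed point $\tau_*$ is trivially an accumulation point (via the constant trajectory) is the only thing not made explicit in the paper, and it is correct.
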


In this inequality, the effect of the high-dimensional asymptotics parameter $\delta$ 
is  extremely clear; it shows that the classical
information bound is not achievable when $ \delta = n/p < \infty$,
There is always an inflation in variance  at least by $(1-1/\delta)^{-1} = \frac{n}{n-p}$.
Moreover, the inflation completely blows up as $\delta \to 1$.

\begin{coro}\label{coro:Main}
Assume the  setting of Theorem \ref{thm:StateEvolution}, and
further assume that $\rho$ is strongly convex and $\delta>1$.
Then for  any pseudo-Lipschitz function $\test:\reals\to \reals$, we have, for $Z\sim\normal(0,1)$
\begin{align}
\lim_{n\to\infty}\frac{1}{p}\sum_{i=1}^p\test(\htheta_i^t-\theta_{0,i})
=_{a.s.} \,  \E\Big\{\test(\sqrt{\delta}\; \tau_*\, Z)\Big\}\, .
\end{align} 
In particular, the solution of Eqs.~(\ref{eq:FixedPoint1}),
(\ref{eq:FixedPoint2}) is necessarily unique. 
\end{coro}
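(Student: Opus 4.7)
The plan is to lift Theorem \ref{thm:StateEvolution}, which characterizes the AMP iterate $\htheta^t$ at any fixed $t$, to the M-estimator $\htheta$ itself by using Theorem \ref{thm:Convergence} together with the pseudo-Lipschitz control on $\test$ to bridge the two.

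First I would initialize AMP with a deterministic $\htheta^0$ satisfying $\AMSE(\htheta^0,\theta_0)=\delta\tau_*^2$, which is admissible because $(\tau_*,b_*)$ is assumed to solve Eqs.~(\ref{eq:FixedPoint1})-(\ref{eq:FixedPoint2}), i.e.\ $\tau_*^2$ is a fixed point of the state evolution map $\tilde{\cal V}$. The state evolution trajectory is then constant, $\tau_t=\tau_*$ for every $t$, and Theorem \ref{thm:StateEvolution} yields, for each fixed $t\ge 0$,
\[
\lim_{n\to\infty}\frac{1}{p}\sum_{i=1}^p\test(\htheta_i^t-\theta_{0,i})\;=_{\text{a.s.}}\;\E\Big\{\test(\sqrt{\delta}\,\tau_*\,Z)\Big\}.
\]

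Next I would write the triangle inequality
\[
\Big|\tfrac{1}{p}\sum_i\test(\htheta_i-\theta_{0,i})-\E\{\test(\sqrt{\delta}\tau_*Z)\}\Big|\;\le\;A_{n,t}+B_{n,t},
\]
where $B_{n,t}\to 0$ a.s.\ as $n\to\infty$ for each fixed $t$ by the previous display, and $A_{n,t}:=\tfrac{1}{p}\sum_i|\test(\htheta_i-\theta_{0,i})-\test(\htheta_i^t-\theta_{0,i})|$. By pseudo-Lipschitzness of $\test$ and Cauchy--Schwarz,
\[
A_{n,t}\;\le\;L\,\sqrt{C_{n,t}}\;\sqrt{\tfrac{1}{p}\|\htheta^t-\htheta\|_2^2},\qquad C_{n,t}:=\tfrac{1}{p}\sum_i\bigl(1+|\htheta_i-\theta_{0,i}|+|\htheta_i^t-\theta_{0,i}|\bigr)^2.
\]
Sending $n\to\infty$ first: the factor $\sqrt{C_{n,t}}$ stays a.s.\ bounded (AMP's AMSE equals $\delta\tau_*^2$ uniformly in $t$ by state evolution, and $\AMSE(\htheta,\theta_0)$ is finite thanks to strong convexity of $\rho$, the Gaussian-design lower bound on the smallest singular value when $\delta>1$, and $\E W^2<\infty$), while the second factor tends to $\sqrt{\AMSE(\htheta^t,\htheta)}$. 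Sending $t\to\infty$ and invoking Theorem \ref{thm:Convergence} then drives $A_{n,t}$ to zero, giving the claimed identity. The main technical obstacle is precisely this interchange of the $n\to\infty$ and $t\to\infty$ limits, that is, producing the uniform-in-$t$ $L^2$ tightness used to control $C_{n,t}$; this is essentially a quantitative version of the AMSE bounds already embedded in the two theorems.

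For the uniqueness assertion, suppose $(\tau_*,b_*)$ and $(\tau_*',b_*')$ both solve Eqs.~(\ref{eq:FixedPoint1})-(\ref{eq:FixedPoint2}). Specializing the identity above to the pseudo-Lipschitz function $\test(x)=x^2$ gives $\AMSE(\htheta,\theta_0)=\delta\tau_*^2=\delta\tau_*'^2$; since $\htheta$ is determined (under strong convexity) independently of which fixed point we invoked, this forces $\tau_*=\tau_*'$. Equation (\ref{eq:FixedPoint2}) combined with the monotonicity of $b\mapsto\E\{\dual'(W+\tau Z;b)\}$ already exploited in Lemma \ref{lemma:BtDef} then pins down $b_*=b_*'$, completing the argument.
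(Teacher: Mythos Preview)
Your proposal is correct in its main thrust and is exactly the argument the paper has in mind: the corollary is stated without proof, immediately after the sentence ``From this and Theorem \ref{thm:StateEvolution}, the desired characterization of $\htheta$ immediately follows,'' so the intended proof is precisely your combination of Theorem \ref{thm:StateEvolution} (applied at the fixed point, so $\tau_t\equiv\tau_*$) with Theorem \ref{thm:Convergence}, bridged by the pseudo-Lipschitz/Cauchy--Schwarz bound. Your handling of the $C_{n,t}$ term is fine: you only need $\limsup_n p^{-1}\|\htheta-\theta_0\|_2^2<\infty$ a.s., and your strong-convexity/singular-value argument gives that.

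There is one genuine slip, in the very last line. Lemma \ref{lemma:BtDef} does \emph{not} establish monotonicity of $b\mapsto \E\{\dual'(W+\tau Z;b)\}$; it only shows this map is continuous with limits $0$ and $1$ at $b\to 0$ and $b\to\infty$, which is why throughout the paper $b_t$ is defined as the \emph{smallest} solution. So you cannot conclude $b_*=b_*'$ from Eq.~(\ref{eq:FixedPoint2}) alone. The clean fix is to use Eq.~(\ref{eq:FixedPoint1}) instead: by Proposition \ref{propo:Phi},
\[
\frac{\partial}{\partial b}\,\dual(z;b)^2
\;=\;2\,\dual(z;b)\,\frac{\partial \dual}{\partial b}(z;b)
\;=\;\frac{2b\,\big(\rho'(\Prox(z;b))\big)^2}{1+b\,\rho''(\Prox(z;b))}\;\ge\;0,
\]
with strict inequality for a.e.\ $z$ (since $\rho$ is strongly convex, $\rho'$ vanishes at a single point). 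Hence $b\mapsto \E\{\dual(W+\tau_* Z;b)^2\}$ is strictly increasing, and once $\tau_*=\tau_*'$ is established by your $\test(x)=x^2$ argument, Eq.~(\ref{eq:FixedPoint1}) forces $b_*=b_*'$.
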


Among other applications, this result can be used to bound the
suboptimality of AMP after a fixed number of iterations. 
Combining Theorems
\ref{thm:StateEvolution} and \ref{thm:Convergence} gives:
\begin{coro}\label{coro:Convergence}
Assume the same setting as in Theorem \ref{thm:StateEvolution}, and
further assume that $\rho$ is strongly convex and $\delta>1$. Then
the almost sure limits $\AMSE(\htheta^t;\theta_0)$ and $\AMSE(\htheta;\theta_0)$
exist, and obey
\begin{align}
\AMSE(\htheta^t;\theta_0)-\AMSE(\htheta;\theta_0) =\delta(\tau_t^2-\tau_*^2)\, .
\end{align}
\end{coro}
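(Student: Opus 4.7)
The plan is to derive the claim as a one-line subtraction once two companion identities are in place: the almost sure identity $\AMSE(\htheta^t;\theta_0) = \delta\tau_t^2$ for each fixed $t$, and the identity $\AMSE(\htheta;\theta_0) = \delta\tau_*^2$. Both are obtained by feeding the pseudo-Lipschitz test function $\xi(x)=x^2$ into the two main results of this section; the corollary itself contributes essentially no new content beyond recognizing this combination.

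For the first identity, I apply Theorem \ref{thm:StateEvolution} with $\xi(x)=x^2$. Pseudo-Lipschitzness is immediate from $|x^2-y^2| \le (1+|x|+|y|)|x-y|$. The theorem's conclusion
\[
\frac{1}{p}\sum_{i=1}^p(\htheta_i^t-\theta_{0,i})^2 \;\longrightarrow_{\rm a.s.}\; \E\{(\sqrt{\delta}\,\tau_t\, Z)^2\} = \delta\tau_t^2
\]
is precisely $\AMSE(\htheta^t;\theta_0) = \delta\tau_t^2$, and in particular shows that the indicated almost sure limit exists.

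For the second identity, the cleanest route is to invoke Corollary \ref{coro:Main} (already established) with the same test function $\xi$, which yields $\AMSE(\htheta;\theta_0) = \delta\tau_*^2$. Unpacking what that corollary uses reveals where the real work sits: one runs a companion copy of AMP on the same $(Y,\bX)$ initialized at some $\htheta^{0,\star}$ satisfying $\AMSE(\htheta^{0,\star},\theta_0) = \delta\tau_*^2$. Because $\tau_*^2$ is a fixed point of the state evolution recursion (\ref{eq:StateEvolution}) by Eqs.~(\ref{eq:FixedPoint1})--(\ref{eq:FixedPoint2}), the companion variance sequence is constant at $\tau_*$. Then Theorem \ref{thm:StateEvolution} applied to this companion AMP gives $\AMSE(\htheta^{t,\star},\theta_0) = \delta\tau_*^2$ for every $t$, while Theorem \ref{thm:Convergence} gives $\AMSE(\htheta^{t,\star},\htheta) \to 0$ as $t\to\infty$. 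A two-sided triangle inequality for the rescaled Euclidean norm $\|\cdot\|_2/\sqrt{p}$ then sandwiches $\sqrt{\AMSE(\htheta,\theta_0)}$ between quantities both converging to $\sqrt{\delta}\,\tau_*$, pinning $\AMSE(\htheta;\theta_0)$ to $\delta\tau_*^2$.

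Subtracting the two identities yields $\AMSE(\htheta^t;\theta_0)-\AMSE(\htheta;\theta_0) = \delta(\tau_t^2-\tau_*^2)$, completing the argument. No genuine obstacle arises within this corollary; the substantive difficulty lives upstream in Theorem \ref{thm:Convergence}, whose proof must do the nontrivial work of showing that the AMP trajectory initialized at the equilibrium actually converges in mean square to the M-estimator rather than merely hovering near it, so that the triangle-inequality step above is legitimate.
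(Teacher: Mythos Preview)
Your proposal is correct and matches the paper's own approach: the paper simply states that the corollary follows by ``combining Theorems \ref{thm:StateEvolution} and \ref{thm:Convergence}'', and your expansion is exactly the intended combination---Theorem \ref{thm:StateEvolution} with $\xi(x)=x^2$ for the first identity, and Theorem \ref{thm:Convergence} plus Theorem \ref{thm:StateEvolution} (applied to the fixed-point-initialized companion AMP, followed by a triangle inequality) for the second, precisely as the paper does in the proof of Corollary \ref{coro:AysVariance}. The subtraction step is then immediate.
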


Theorem \ref{thm:Convergence} extends to cover
general Gaussian matrices  $\bX$ with 
i.i.d. rows.
\begin{definition}
We say that a sequence of random design matrices $\{\bX(n)\}_n$,
 with $n\to\infty$, is \emph{a general Gaussian design} if each
 $\bX=\bX(n)$ has dimensions $n\times p$, and rows $(X_{i})_{i\in
    [n]}$ that are i.i.d. $\normal(0,\Sigma/n)$, where
  $\Sigma=\Sigma(n)\in\reals^{p\times p}$ is a strictly positive
  definite matrix. Further, $p = p(n)$
  is such
that $\lim_{n\to\infty}n/p(n) = \delta\in (0,\infty)$.
\end{definition}

Notice that, if $\bX$ is a general Gaussian design, then
$\bX\Sigma^{-1/2}$ is a standard Gaussian design.
The following then follows from Corollary \ref{coro:MainGen} together
with a simple change of variables argument, cf. \cite[Lemma 1]{karoui2013robust}.
\begin{coro}\label{coro:MainGen} 
Assume the same setting as in Theorem \ref{thm:StateEvolution}, but
with $\{\bX(n)\}_{n\ge 0}$ being a general Gaussian design with
covariance $\Sigma$, and
further assume that $\rho$ is strongly convex and $\delta>1$.
There is a scalar random variable $T_n$ so that 
\begin{align}
\htheta = \theta_0 + \sqrt{\delta}\, T_n \Sigma^{-1/2}\bZ\, ,\label{eq:Representation}
\end{align}
where $\bZ\sim\normal(0,\id_{p\times p})$ and we have the almost-sure limit
$\lim_{n\to\infty}\ T_n =_{a.s.} \tau_*$, where $\tau_*$ solves Eqs.~(\ref{eq:FixedPoint1}),
(\ref{eq:FixedPoint2}).
\end{coro}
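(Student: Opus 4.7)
The plan is to reduce to the standard Gaussian design case already handled by Corollary~\ref{coro:Main}, by performing a deterministic linear change of variables that ``whitens'' $\bX$, and then to extract the explicit representation (\ref{eq:Representation}) from the orthogonal invariance of the i.i.d.\ Gaussian design.

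First I would set $\widetilde{\bX} = \bX\Sigma^{-1/2}$ and $\widetilde{\theta}_0 = \Sigma^{1/2}\theta_0$. Since $\Sigma$ is deterministic and strictly positive definite and the rows of $\bX$ are i.i.d.\ $\normal(0,\Sigma/n)$, $\widetilde{\bX}$ has i.i.d.\ rows $\normal(0,I_p/n)$ and is therefore a standard Gaussian design. The identity $Y = \bX\theta_0 + W = \widetilde{\bX}\widetilde{\theta}_0 + W$ together with the reparametrization $\widetilde{\theta} = \Sigma^{1/2}\theta$ gives $\cL(\theta;Y,\bX) = \cL(\widetilde{\theta};Y,\widetilde{\bX})$, so the two M-estimation problems are equivalent and their minimizers are related by $\widehat{\widetilde{\theta}} = \Sigma^{1/2}\widehat\theta$, whence $\widehat\theta - \theta_0 = \Sigma^{-1/2}(\widehat{\widetilde{\theta}} - \widetilde{\theta}_0)$. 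Strong convexity and $\delta>1$ are preserved under this reparametrization, so all hypotheses of Corollary~\ref{coro:Main} transfer to the transformed problem (with AMP initialized by $\Sigma^{1/2}\widehat\theta^0$).

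Next I would apply Corollary~\ref{coro:Main} to the standard-design problem $(\widetilde{\bX},Y)$ with true parameter $\widetilde{\theta}_0$. For every pseudo-Lipschitz $\xi$, this yields
\[
\lim_{n\to\infty}\frac{1}{p}\sum_{i=1}^p\xi(\widehat{\widetilde{\theta}}_i - \widetilde{\theta}_{0,i}) =_{a.s.} \E\bigl\{\xi(\sqrt{\delta}\,\tau_*\, Z)\bigr\},
\]
and in particular $\lim_{n\to\infty} p^{-1}\|\widehat{\widetilde{\theta}}-\widetilde{\theta}_0\|_2^2 =_{a.s.} \delta\tau_*^2$. Together with the whitening identity above, every such observable of $\widehat\theta-\theta_0$ can be read off by pulling back along $\Sigma^{-1/2}$.

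To produce the exact representation in (\ref{eq:Representation}), I would then invoke the rotational invariance of the standard Gaussian design: for any orthogonal $O$ with $O\widetilde{\theta}_0 = \widetilde{\theta}_0$, one has $\widetilde{\bX}O \ed \widetilde{\bX}$, while $\widehat\theta(\widetilde{\bX}O,Y) = O^{\sT}\widehat\theta(\widetilde{\bX},Y)$, so that $\widehat{\widetilde{\theta}}-\widetilde{\theta}_0 \ed O^{\sT}(\widehat{\widetilde{\theta}}-\widetilde{\theta}_0)$. Following \cite[Lemma~1]{karoui2013robust}, this symmetry forces the residual to be spherically symmetric in the hyperplane orthogonal to $\widetilde{\theta}_0$, which combined with the asymptotic magnitude $p^{-1/2}\|\widehat{\widetilde{\theta}}-\widetilde{\theta}_0\|_2 \to \sqrt{\delta}\,\tau_*$ allows one to write $\widehat{\widetilde{\theta}}-\widetilde{\theta}_0 = \sqrt{\delta}\,T_n\,\bZ$ for some scalar $T_n$ and $\bZ\sim\normal(0,I_p)$, with $\lim_{n\to\infty}T_n =_{a.s.} \tau_*$. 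Inverting the change of variables yields $\widehat\theta = \theta_0 + \sqrt{\delta}\,T_n\Sigma^{-1/2}\bZ$, which is the claim.

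The main technical obstacle I would expect to face is this last step: Corollary~\ref{coro:Main} by itself controls only empirical averages of coordinates, while (\ref{eq:Representation}) is a joint distributional statement about the entire vector $\widehat\theta - \theta_0$. Extracting it requires a genuine symmetrization argument, and the subgroup of rotations fixing $\widetilde{\theta}_0$ only acts transitively on the hyperplane orthogonal to $\widetilde{\theta}_0$, so the one-dimensional component of $\widehat{\widetilde{\theta}}-\widetilde{\theta}_0$ along $\widetilde{\theta}_0$ must be handled separately. Its norm is $O(1/\sqrt{p})$ relative to the orthogonal component and can be absorbed into $T_n$ using the pseudo-Lipschitz convergence above, but making this absorption literal rather than merely asymptotic in distribution is the delicate point of the argument.
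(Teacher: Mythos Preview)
Your approach is essentially the same as the paper's: the paper's entire proof sketch is the one-line remark that the result follows from Corollary~\ref{coro:Main} together with the change-of-variables (whitening) argument and \cite[Lemma~1]{karoui2013robust}, which is precisely the reduction and symmetrization you describe. Your identification of the delicate point---that rotational invariance only acts on the hyperplane orthogonal to $\widetilde{\theta}_0$, leaving a one-dimensional component to be absorbed into $T_n$---is exactly the content hidden behind the citation, so there is nothing to correct.
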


This result coincides with \emph{Corollary 1} in
\cite{karoui2013robust} apart from a factor $\sqrt{n}$ in the random
part of Eq.~(\ref{eq:Representation}) that arises because of a
difference in the normalization of $\bX$. 
%
%
\section{Discussion}
\label{sec:Discussion}

Several generalizations of the present proof technique should be
possible, and would be of interest. We list a few in order of
increasing difficulty:
\begin{enumerate}
\item Generalize the i.i.d. Gaussian rows model for $\bX$ by allowing different
  rows to be randomly scaled copies of a common
  $X\sim\normal(0,\Sigma/n)$. This is the setting of  \cite[Result 1]{karoui2013robust}.  
\item Remove the smoothness and strong convexity assumptions on $\rho$. 
\item Add a regularization term to the objective function
   $\cL(\theta)$ cf. Eq.~(\ref{eq:Mestimation}), of the form $\sum_{i=1}^p
  J(\theta_i)$, with 
$J:\reals\to\reals$ a convex penalty. For $\ell_1$ penalty and
$\ell_2$ loss, this reduces to the Lasso, studied in
\cite{BayatiMontanariLASSO}.
\item Generalize the present results to non-Gaussian designs. We
  expect --for instance-- that they should hold universally across
  matrices $\bX$ with i.i.d. entries (under suitable moment
  conditions). A similar universality result was established in
  \cite{BM-Universality} for compressed sensing. 
\end{enumerate}

Let us  mention that alternative proof techniques would be 
worth exploring as well. In particular, Shcherbina and Tirozzi
\cite{shcherbina2003rigorous} define a statistical mechanics model 
with energy function that is analogous to  the loss $\cL(\theta)$,
cf. Eq.~(\ref{eq:Mestimation}), and  Talagrand \cite[Chapter
3]{TalagrandVolI} proves further results on the same model. While this
treatment focuses on estimating a certain partition function, in the
case of strongly convex $\rho$ it should be possible to extract
properties of the minimizer from a `zero-temperature' limit.

Finally, Rangan \cite{RanganGAMP} considers a similar regression model
to the one studied here using approximate message passing algorithms,
albeit from a Bayesian point of view.
%
%
\section{Duality between robust regression and regularized least squares}

The reader might have noticed many analogies between the analysis in
the last pages and earlier work on estimation in the underdetermined
regime $n<p$ using the Lasso
\cite{DMM09,DMM-NSPT-11,donoho2011compressed,BayatiMontanariLASSO}.
Most specifically, the central tool in our proof of the correctness
of State Evolution is a set of lemmas and theorems about
analysis of recursive systems that were developed to understand the Lasso.
That the same machinery directly gives results in robust regression
 \-- see for example our proof of correctness of State Evolution in
Appendix \ref{app:StateEvolution} below \-- might seem particularly unexpected.
In this section we briefly point out that the two problems are so closely linked that phenomena which appear in
one situation are bound to appear in the other.

\subsection{Duality of optimization problems}

In a very strong sense, solving an M-estimation problem
with $p < n$ is the very same thing as solving a related penalized regression
problem in $\tp > \tn$.
Given a convex function $J:\reals\to\reals$,  define the $\rho$ function
\begin{align}
\rho_J(z) \equiv \min_{x\in\reals}\Big\{\frac{1}{2}(z-x)^2+J(x)\Big\} \label{eq:RhoJdef}
\end{align}
We then have the M-Estimation problem
\beq \label{eq:MJ}
    (M_J) \qquad\qquad  \min_{\theta\in\reals^p} \;\; \sum_{i=1}^n \rho_J(Y_i - \langle X_i , \theta \rangle)
\eeq

This problem has $p < n$ and is generically a determined problem.
We now construct a corresponding underdetermined problem with the `same' solution.
Set $\tn= n-p$, $\tp=n$.
We  soon will construct a vector/matrix pair
$(\tY\in\reals^{\tn},\tbX\in\reals^{\tn \times \tp})$
obeying $\tn < \tp$, where $\tY$ and $\tbX$ are related to $Y$ and $\tbX$ in a specific way.
With this pair we pose the $J$-penalized least squares problem
\begin{align}
(L_{J}) \qquad\qquad \min_{\beta\in\reals^{\tp}} \;\;\frac{1}{2}\|\tY-\tbX\beta\|_2^2+\sum_{i=1}^{\tp}J(\beta_i)\, .
\label{eq:Regularized}
\end{align}
with solution $ \hbeta (\tY;\tbX) $, say.

Here is the specific pair that links $(M_J)$ with $(L_J)$.
We let  $\tbX$ be a matrix with orthonormal rows such that $\tbX \bX = 0$, i.e.
\begin{align}
{\rm null}(\tbX) = {\rm image}( \bX) \, ,  \label{eq:Xdual}
\end{align}
finally, we set  $\tY = \tbX Y$.

\subsubsection{The Lasso-Huber connection}

Of special interest is the case $J(x) = \lambda\,|x|$ in which case $(L_J)$ of
 (\ref{eq:Regularized}) defines the Lasso estimator.
  Then $\rho_J(x) = \Huber (x;\lambda)$
 is the Huber loss and $(M_J)$ of (\ref{eq:MJ}) defines the Huber M-estimate.
 Indeed, in that case  $(L_J)$ is more classically presented as 
\begin{align}
(\mbox{Lasso}_\lambda) \qquad \min_{\beta\in\reals^{\tp}} \frac{1}{2}\|\tY-\tbX\beta\|_2^2+ \lambda \sum_{i=1}^{\tp}| \beta_i|\, ,
\label{eq:Lasso}
\end{align}
while $(M_J)$ is more classically presented as
\beq \label{eq:MHuber}
    (\mbox{Huber}_\lambda) \qquad  \min_{\beta \in \reals^p}
    \sum_{i=1}^n  \Huber  (Y_i - \langle X_i , \beta \rangle;\lambda)
\eeq

In this special case, our general result from the next section implies the following:

\begin{propo}
With problem instances $(Y,X)$ and $(\tY,\tbX)$ related as above,
the optimal values of  the Lasso problem $(\mbox{Lasso}_\lambda)$ 
and  the Huber problem $(\mbox{Huber}_\lambda)$ are identical.
The solutions of the two problems are in one-one-relation. In
particular,  we have
\begin{align}
\htheta  =  (\bX^{\sT}\bX)^{-1}\bX^{\sT}(Y-\hbeta)\, .
\end{align}
\end{propo}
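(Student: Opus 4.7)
The plan is to recognize the Huber loss as a min-convolution of a quadratic with $\lambda|\cdot|$, lift the M-estimation problem to a joint convex program in $\theta \in \reals^p$ and a slack vector $x \in \reals^n$, profile out $\theta$ exactly by ordinary least squares, and identify the remaining problem in $x$ as the Lasso instance on $(\tbX, \tY)$.

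First, I would apply the identity $\Huber(z;\lambda) = \min_{x \in \reals}\{\tfrac12(z-x)^2 + \lambda|x|\}$, which is a special case of Eq.~(\ref{eq:RhoJdef}) with $J(x) = \lambda|x|$, coordinate-wise to rewrite
\begin{equation*}
\min_{\theta \in \reals^p} \sum_{i=1}^n \Huber(Y_i - \<X_i,\theta\>;\lambda) \;=\; \min_{\theta, x} \Bigl\{\tfrac12 \|Y - \bX\theta - x\|_2^2 + \lambda\|x\|_1\Bigr\}.
\end{equation*}
This rewriting is legitimate because the lifted objective is jointly convex and coercive in $(\theta, x)$, and the coordinate-wise inner minimizations in $x_i$ commute with the minimization in $\theta$.

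Next, I profile out $\theta$ for fixed $x$. Under the Gaussian design with $n > p$, $\bX$ has full column rank almost surely, so the inner problem in $\theta$ is ordinary least squares with response $Y - x$; its unique minimizer is $\theta(x) = (\bX^{\sT}\bX)^{-1}\bX^{\sT}(Y - x)$ and its optimal value equals $\tfrac12 \|P^{\perp}(Y - x)\|_2^2$, where $P^{\perp} = \id - \bX(\bX^{\sT}\bX)^{-1}\bX^{\sT}$ is the projector onto $\mathrm{image}(\bX)^{\perp}$. Since $\tbX$ has orthonormal rows with $\mathrm{null}(\tbX) = \mathrm{image}(\bX)$, its row space equals $\mathrm{image}(\bX)^{\perp}$ and so $\tbX^{\sT}\tbX = P^{\perp}$. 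Using $\tY = \tbX Y$, one has $\|P^{\perp}(Y - x)\|_2^2 = \|\tbX(Y - x)\|_2^2 = \|\tY - \tbX x\|_2^2$, so the profiled Huber problem reduces to
\begin{equation*}
\min_{x \in \reals^n} \Bigl\{\tfrac12 \|\tY - \tbX x\|_2^2 + \lambda \|x\|_1\Bigr\},
\end{equation*}
which is exactly $(\mathrm{Lasso}_\lambda)$ after identifying $\beta \equiv x$ and $\tp = n$. Equality of optimal values is then immediate, and reading off the profiling formula gives $\htheta = (\bX^{\sT}\bX)^{-1}\bX^{\sT}(Y - \hbeta)$.

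The main and essentially only obstacle is to verify the one-to-one correspondence of minimizers. The Huber objective is strictly convex in $\theta$ thanks to full column rank of $\bX$, so $\htheta$ is unique. Given $\htheta$, the optimizer in $x$ of the lifted joint objective is determined coordinate-wise by soft-thresholding, $\hbeta_i = \mathrm{sign}(Y_i - \<X_i, \htheta\>) \cdot \max\bigl(0,\; |Y_i - \<X_i, \htheta\>| - \lambda\bigr)$, so the Lasso minimizer $\hbeta$ is unique as well. Conversely, starting from any Lasso minimizer $\hbeta$, the profiling formula $\htheta = (\bX^{\sT}\bX)^{-1}\bX^{\sT}(Y - \hbeta)$ returns the corresponding Huber minimizer, closing the bijection.
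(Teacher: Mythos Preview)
Your argument is correct but takes a genuinely different route from the paper. The paper proves the general $(M_J)\leftrightarrow(L_J)$ proposition by matching first-order (subdifferential) stationarity conditions: starting from a Huber minimizer $\htheta$, it picks $u\in\partial\rho(Y-\bX\htheta)\cap\mathrm{null}(\bX^{\sT})$, sets $\hbeta=Y-\bX\htheta-u$, and checks that $\tbX^{\sT}(\tY-\tbX\hbeta)\in\partial J(\hbeta)$ using the identity $u\in\partial\rho(x)\Leftrightarrow u\in\partial J(x-u)$; the reverse direction is analogous. Your approach instead lifts the Huber problem to a joint convex program in $(\theta,x)$ via the min-convolution identity, profiles out $\theta$ exactly by least squares, and recognizes the profiled problem as the Lasso instance through $\tbX^{\sT}\tbX=P^{\perp}$.

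What each buys: your profiling argument is more elementary (no subdifferential calculus), directly delivers equality of \emph{optimal values} as a byproduct, and makes the bijection of minimizers transparent by exhibiting the unique joint minimizer $(\htheta,\hbeta)$ via strict convexity in $\theta$ followed by coordinate-wise soft-thresholding in $x$. The paper's KKT route, on the other hand, handles the general convex penalty $J$ uniformly (where $\partial\rho$ may be set-valued and the correspondence genuinely one-to-many), at the cost of not making the equality of objective values explicit. In the Huber/Lasso special case your argument is cleaner; for general $J$ the paper's formulation is the more natural one.
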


In a sense the Lasso problem solution $\hbeta$  is finding the outliers in $Y$;
once the solution is known,
the solution of the M-estimation problem is simply a
least squares regression on adjusted data $Y_{\rm adj} \equiv (Y-\hat{\beta})$ with
outliers removed.

\subsubsection{General duality result}

We will now show that the problem (\ref{eq:MJ}) is dual to
(\ref{eq:Regularized}) under or special choice of $(\tY,\tbX)$,
via (\ref{eq:Xdual}).

\emph{Notation.} For $x\in\reals^n$, we denote by $\partial \rho(x)$ the subgradient of
the convex function $\sum_{i=1}^n\rho(x_i)$, at $x$. Analogously, for 
$z\in\reals^{\tp}$, we denote by $\partial J(z)$ the subgradient of
the convex function $\sum_{i=1}^{\tp} J(z_i)$, at $z$. %
\begin{propo}
Assume that $\rho(\,\cdot\,) = \rho_J(\,\cdot\,)$, that $\tbX$ has
orthonormal rows with ${\rm null}(\tbX) = {\rm image}( \bX)$, and
finally that $\tY = \tbX\, Y$. Then the solutions of the regularized
least squares problem (\ref{eq:Regularized}) are in one-to-one
correspondence with the solutions of the robust regression problem
(\ref{eq:Mestimation}), via the mappings
\begin{align}
\hbeta & =  Y-\bX\htheta- u\, ,\;\;\;\;\;\;\;\; u\in {\rm
  null}(\bX^{\sT})\cap \partial\rho(y-\bX\htheta)\, ,\\
\htheta & =  (\bX^{\sT}\bX)^{-1}\bX^{\sT}(Y-\hbeta)\, .
\end{align}
\end{propo}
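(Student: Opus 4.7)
The plan is to compare the stationarity (KKT) conditions for the two convex problems and exploit the Moreau-Yosida structure of $\rho = \rho_J$. First I would write down the stationarity conditions: $\hat\theta$ solves $(M_J)$ iff there exists $u \in \partial\rho(Y-\bX\hat\theta)$ with $\bX^{\sT}u = 0$, i.e.\ $u \in \mathrm{null}(\bX^{\sT}) \cap \partial\rho(Y-\bX\hat\theta)$, which follows from $0 \in -\bX^{\sT}\partial\rho(Y-\bX\hat\theta)$. For $(L_J)$, stationarity reads $\tilde\bX^{\sT}(\tY - \tilde\bX\hat\beta) \in \partial J(\hat\beta)$.

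The algebraic heart of the proof is a Moreau-envelope subgradient identity: for $\rho = \rho_J$ defined by (\ref{eq:RhoJdef}), one has the set-level equivalence
\[
v \in \partial\rho(z) \;\iff\; v \in \partial J(z - v).
\]
Indeed, the inner minimization in (\ref{eq:RhoJdef}) has optimality condition $z - x^* \in \partial J(x^*)$, and Moreau's identity $\partial\rho_J(z) = z - \mathrm{prox}_J(z)$ translates this into the displayed equivalence (with $x^* = z - v$). I would prove this once and then use it in both directions.

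Next I would verify the forward direction. Given a minimizer $\hat\theta$ of $(M_J)$, pick $u \in \mathrm{null}(\bX^{\sT}) \cap \partial\rho(Y-\bX\hat\theta)$ and set $\hat\beta = Y - \bX\hat\theta - u$. Using $\tilde\bX\bX = 0$ from (\ref{eq:Xdual}), we get $\tY - \tilde\bX\hat\beta = \tilde\bX u$. Because $\tilde\bX$ has orthonormal rows with $\mathrm{null}(\tilde\bX) = \mathrm{image}(\bX)$, the matrix $\tilde\bX^{\sT}\tilde\bX$ is the orthogonal projector onto $\mathrm{image}(\bX)^{\perp} = \mathrm{null}(\bX^{\sT})$; since $u \in \mathrm{null}(\bX^{\sT})$, this projector fixes $u$, yielding $\tilde\bX^{\sT}(\tY - \tilde\bX\hat\beta) = u$. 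Now apply the Moreau identity to $z = Y - \bX\hat\theta$: $u \in \partial\rho(z)$ gives $u \in \partial J(z - u) = \partial J(\hat\beta)$, which is exactly the $(L_J)$ stationarity.

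The reverse direction is symmetric. Given a minimizer $\hat\beta$ of $(L_J)$, define $\hat\theta = (\bX^{\sT}\bX)^{-1}\bX^{\sT}(Y - \hat\beta)$ and $u = Y - \bX\hat\theta - \hat\beta$. By construction $\bX^{\sT} u = 0$, and the same projector calculation shows that the $(L_J)$ stationarity condition reads $u \in \partial J(\hat\beta) = \partial J(Y-\bX\hat\theta-u)$; the Moreau identity in reverse then gives $u \in \partial\rho(Y-\bX\hat\theta)$. Combined with $u \in \mathrm{null}(\bX^{\sT})$ this is the $(M_J)$ stationarity, so $\hat\theta$ solves $(M_J)$. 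A direct check that these two maps are mutual inverses (substituting one definition into the other and using $u$ in $\mathrm{null}(\bX^{\sT})$ together with the projection onto $\mathrm{image}(\bX)$) yields the bijection claimed. The main obstacle is subdifferential bookkeeping when neither $\rho_J$ nor $J$ is differentiable: one must treat the Moreau identity as a set-level equivalence rather than a statement about gradients, and verify that the multi-valued intersection $\mathrm{null}(\bX^{\sT}) \cap \partial\rho(Y-\bX\hat\theta)$ is non-empty at any $(M_J)$ minimizer. Everything else reduces to the projector identity enabled by the orthonormal-rows hypothesis on $\tilde\bX$.
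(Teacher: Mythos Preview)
Your proposal is correct and follows essentially the same route as the paper: both hinge on the Moreau-envelope subgradient equivalence $u\in\partial\rho(z)\Leftrightarrow u\in\partial J(z-u)$ together with the projector identity $\tbX^{\sT}\tbX = $ orthogonal projection onto ${\rm null}(\bX^{\sT})$, and the forward/reverse constructions of $\hbeta$ and $\htheta$ match the paper's exactly. Your write-up is in fact more complete, since the paper only sketches the reverse direction and does not spell out the bijection check.
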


\begin{proof}
`Differentiating'
Eq.~(\ref{eq:RhoJdef}) it is easy to see that
\begin{align}
u \in\partial\rho(x) \;\;\;\mbox{ if and only if }\;\;\; u
\in \partial J(x-u)\, . \label{eq:Subdifferential}
\end{align}

First assume  $\htheta$ is a minimizer of problem
(\ref{eq:MJ}). This happens if and only if there exists $u\in\reals^n$ such that
\begin{align}
\bX^{\sT} u  =0\, ,\;\;\;\;\;\;\;\;\; u\in\partial\rho(Y-\bX\htheta)\,\label{eq:StationarityRobust}
.
\end{align}
We then claim that  $\hbeta \equiv Y-\bX\htheta- u$ is a minimizer of
Eq.~(\ref{eq:Regularized}). Indeed
\begin{align}
\tbX^{\sT}(\tY-\tbX\hbeta)&=\tbX^{\sT}\tbX (Y-\hbeta) \\
&= \tbX^{\sT}\tbX\big(\bX\htheta+u \big) = u\, ,
\end{align}
where the last identity follows since, by Eq.~(\ref{eq:Xdual}), 
${\rm null}(\bX^{\sT}) = {\rm image}( \tbX^{\sT})$, and hence $u\in {\rm image}( \tbX^{\sT})$
by Eq.~(\ref{eq:StationarityRobust}). 
Using again Eqs.~(\ref{eq:StationarityRobust}) and
(\ref{eq:Subdifferential}), we deduce that $u\in \partial J(\hbeta)$,
i.e.
\begin{align}
\tbX^{\sT}(\tY-\tbX\hbeta) \in \partial J(\hbeta)\, ,
\end{align}
which is the stationarity condition for the problem
(\ref{eq:Regularized}).

Viceversa a similar argument shows that, given 
$\hbeta$ that minimizes Eq.~(\ref{eq:Regularized}), and 
$\htheta \equiv (\bX^{\sT}\bX)^{-1}\bX^{\sT}(Y-\hbeta)$ is a minimizer 
of the robust regression problem (\ref{eq:MJ}).
\end{proof}
\subsection{Comparison to AMP in the $p > n$ case}

The last section raises the possibility that
the phenomena found in this paper for M-estimation in the $p < n$
case  are actually isomorphic to 
those found in our previous work on penalized
regression  in the $p > n$ case; \cite{DMM09,DMM-NSPT-11,donoho2011compressed,BayatiMontanariLASSO}.
Here we merely content ourselves with sketching
a few similarities.

To be definite,
 consider robust regression using the Huber loss \cite{HuberMinimax,HuberBook}
$\rho(x) = x^2/2$ for $|x|\le \lambda$ and $\rho(x) =
\lambda|x|-\lambda^2/2$ otherwise. In this case it is easy to see that
\begin{align}
\dual(z;b) = 
\begin{cases}
\lambda b & \mbox{ if $z>\lambda(1+b)$,}\\
b\, z/(1+b) & \mbox{ if $|z|\le\lambda(1+b)$,}\\
-\lambda b  & \mbox{ if $z<-\lambda(1+b)$.}
\end{cases}
\end{align}
In order to make contact with the Lasso, recall the definition of soft
thresholding operator $\eta(x;\alpha) = \sign(x)\,(|x|-\alpha)_+$. We
have the relationship
\begin{align}
\dual(z;b) = \frac{b\,z}{1+b}-\eta\Big(\frac{b\,z }{1+b};\lambda
b\Big)\, .
\end{align}
Letting $c_t\equiv b_t/(1+b_t)$, the state evolution equation (\ref{eq:StateEvolution}), then reads
\begin{align}
\tau_{t+1}^2 = \delta c_t^2 \; \E\Big\{\Big[\eta\Big(W+\tau_t\,
Z;\lambda (1+b_t)\Big)-W-\tau_t\,
Z\Big]^2\Big\}\, ,.
\end{align}
This is very close to the state evolution equation
in compressed sensing  for reconstructing
a sparse signal whose entries have distribution $F_W$, from an
underdetermined number of linear measurements; indeed in that setting we
have the state evolution recursion
\begin{align}
\tau_{t+1}^2 = \delta \; \E\Big\{\Big[\eta\Big(W+\tau_t\,
Z;\lambda \tau_t\Big)-W ]^2\Big\}\, ;
\end{align}
\cite{DMM09,DMM-NSPT-11,donoho2011compressed,BayatiMontanariLASSO}.
The connection is quite suggestive: while in compressed sensing we
look for the few non-zero coefficients in the signal, in robust
regression we try to identify the few outliers contaminating the linear relation.
A similar  duality was already pointed out in
\cite{donoho2009observed}, although in a specific setting.

%
%

\section*{Acknowledgements}

This work was
partially supported by the NSF CAREER award CCF-0743978, the NSF grant DMS-0806211, and
the grants AFOSR/DARPA FA9550-12-1-0411 and FA9550-13-1-0036.
%
%
\appendix

\section{Properties of the functions $\Prox$, $\dual$}
\label{app:Properties}

Throughout this section $\rho:\reals\to\reals$ is
convex bounded below and smooth (i.e. with bounded second derivative).
Recall the definition of
$\Prox:\reals\times\reals_{>0}\to\reals$ and $\dual:\reals\times\reals_{>0}\to \reals$,
given by
\begin{align}
\Prox(z;b) & \equiv \arg\min_{x\in\reals} \Big\{\rho(x) +
\frac{1}{2b}(x-z)^2\Big\}\, ,\\
\dual(z;b) &\equiv b\, \rho'\big(\Prox(z;b)\big)\, .
\end{align}

\begin{propo}\label{propo:Prox}
The function $\Prox:\reals\times\reals_{>0}\to\reals$ is
differentiable in its domain, with partial derivatives
\begin{align}
\frac{\partial \Prox}{\partial z}(z;b) =\left.
\frac{1}{1+b\rho''(x)}\right|_{x = \Prox(z;b)}\, ,\;\;\;\;\;
\frac{\partial \Prox}{\partial b}(z;b) =\left.
-\frac{\rho'(x)}{1+b\rho''(x)}\right|_{x = \Prox(z;b)}\, .
\end{align}
In particular, letting $\|\rho''\|_{\infty} \equiv
\sup_{x\in\reals}\rho''(x)$, and for any fixed $b$, $z\mapsto \Prox(z;b)$ is strictly
increasing and Lipschitz continuous, with 
\begin{align}
\frac{1}{1+b\|\rho''\|_{\infty}}\le\frac{\partial \Prox}{\partial z}(z;b) \le 1
\end{align}
\end{propo}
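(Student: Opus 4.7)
\medskip

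\noindent\textbf{Proof proposal.} The plan is a direct application of the implicit function theorem to the first-order optimality condition for the proximal problem. Let $F(x,z,b) \equiv b\rho'(x) + x - z$. Since $\rho$ is convex with bounded second derivative and $b>0$, the map $x \mapsto \rho(x) + \frac{1}{2b}(x-z)^2$ is strongly convex (the quadratic term contributes $1/b$ to the second derivative), so the minimizer $x^* = \Prox(z;b)$ exists and is unique. The first-order condition is precisely
\begin{equation*}
F(\Prox(z;b),z,b) = 0, \qquad \text{i.e.} \quad b\rho'(\Prox(z;b)) + \Prox(z;b) = z.
\end{equation*}

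Next I would verify the hypotheses of the implicit function theorem at $(x^*,z,b)$. Because $\rho$ is convex and $\rho''$ is bounded, we have $0 \le \rho''(x) \le \|\rho''\|_\infty$ everywhere, and in particular
\begin{equation*}
\frac{\partial F}{\partial x}(x,z,b) = 1 + b\rho''(x) \ge 1 > 0.
\end{equation*}
This guarantees that $F$ is continuously differentiable in a neighbourhood of $(x^*,z,b)$ and that $\partial_x F$ is bounded away from zero, so the IFT produces a $C^1$ solution map $(z,b) \mapsto \Prox(z;b)$ with
\begin{equation*}
\frac{\partial \Prox}{\partial z} = -\frac{\partial_z F}{\partial_x F} = \frac{1}{1+b\rho''(x^*)}, \qquad \frac{\partial \Prox}{\partial b} = -\frac{\partial_b F}{\partial_x F} = -\frac{\rho'(x^*)}{1+b\rho''(x^*)},
\end{equation*}
evaluated at $x^* = \Prox(z;b)$. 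These are precisely the two identities claimed.

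The two-sided bound on $\partial_z \Prox$ is now immediate: using $0 \le \rho''(x^*) \le \|\rho''\|_\infty$ one gets $1 \le 1 + b\rho''(x^*) \le 1 + b\|\rho''\|_\infty$, so
\begin{equation*}
\frac{1}{1+b\|\rho''\|_\infty} \le \frac{\partial \Prox}{\partial z}(z;b) \le 1.
\end{equation*}
The strict lower bound shows that $z \mapsto \Prox(z;b)$ is strictly increasing, and the upper bound (together with the mean value theorem) yields $1$-Lipschitz continuity in $z$.

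I do not expect any serious obstacle: the only point requiring mild care is the smoothness hypothesis on $\rho$. In this appendix the authors assume $\rho''$ is genuinely defined and bounded, so IFT applies verbatim; if one only had $\rho'$ absolutely continuous (as in the main-text definition), one would instead run the same argument using the strong monotonicity $F(x+h,z,b) - F(x,z,b) \ge h$ and the absolute continuity of $\rho'$ to identify the derivative almost everywhere, which is a routine adaptation and not needed here.
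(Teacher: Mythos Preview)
Your proposal is correct and follows exactly the paper's approach: write the first-order optimality condition $x + b\rho'(x) - z = 0$ for the strongly convex minimization, then apply the implicit function theorem. You have simply spelled out the details (checking $\partial_x F \ge 1$, reading off the partial derivatives, and deducing the two-sided bound) that the paper leaves implicit in the one-line phrase ``the claim then follows from the Implicit Function theorem.''
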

\begin{proof}
Since, for  $b>0$, $x\mapsto \rho(x) + (x-z)^2/(2b)$ is differentiable
and strongly convex, $x= \Prox(z;b)$ is uniquely determined by setting
to zero the first derivative:
\begin{align}
x+b\rho'(x) -z = 0\, .\label{eq:FirstOrderProx}
\end{align}
The claim then follows from the Implicit Function theorem.
\end{proof}

\begin{propo}\label{propo:Phi}
For $(z,b)\in\reals\times\reals_+$, we have
\begin{align}
\dual(z;b) = z-\Prox(z,b)\, ,
\end{align}
and hence $\dual$ is differentiable, with partial derivatives
\begin{align}
\frac{\partial \dual}{\partial z}(z;b) =\left.
\frac{b\rho''(x)}{1+b\rho''(x)}\right|_{x = \Prox(z;b)}\, ,\;\;\;\;\;
\frac{\partial \dual}{\partial b}(z;b) =\left.
\frac{\rho'(x)}{1+b\rho''(x)}\right|_{x = \Prox(z;b)}\, .
\end{align}
In particular, for any fixed $b$, $z\mapsto \dual(z;b)$ is strictly
increasing and Lipschitz continuous, with 
\begin{align}
\frac{b\inf_{x\in\reals}\rho''(x)}{1+b\inf_{x\in\reals}\rho''(x)}\le\frac{\partial
  \dual}{\partial z}(z;b) \le 
\frac{b\|\rho''\|_{\infty}}{1+b\|\rho''\|_{\infty}}\, . \label{eq:BracketPsiP}
\end{align}
\end{propo}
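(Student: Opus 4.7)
The plan is to derive everything from the first-order condition for $\Prox$ that was already used in the proof of Proposition \ref{propo:Prox}. Concretely, from Eq.~(\ref{eq:FirstOrderProx}), the minimizer $x = \Prox(z;b)$ satisfies $x + b\rho'(x) - z = 0$, so that $b\rho'(\Prox(z;b)) = z - \Prox(z;b)$. But the left-hand side is exactly $\dual(z;b)$ by definition, which immediately yields the identity $\dual(z;b) = z - \Prox(z;b)$. This is the key algebraic step and no work beyond reading off the stationarity condition is required.

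With the identity in hand, differentiability of $\dual$ is inherited from differentiability of $\Prox$ (proved in Proposition \ref{propo:Prox}), and the partial derivatives follow by direct subtraction:
\[
\frac{\partial \dual}{\partial z}(z;b) = 1 - \frac{\partial \Prox}{\partial z}(z;b) = 1 - \frac{1}{1+b\rho''(x)} = \frac{b\rho''(x)}{1+b\rho''(x)},
\]
\[
\frac{\partial \dual}{\partial b}(z;b) = -\frac{\partial \Prox}{\partial b}(z;b) = \frac{\rho'(x)}{1+b\rho''(x)},
\]
both evaluated at $x = \Prox(z;b)$, using the expressions from Proposition \ref{propo:Prox}.

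Finally, for the bracket (\ref{eq:BracketPsiP}), I observe that the map $u \mapsto bu/(1+bu)$ is monotonically increasing on $[0,\infty)$ whenever $b > 0$. Since $\rho$ is convex we have $\rho''(x) \geq \inf_y \rho''(y)$ and, by smoothness, $\rho''(x) \leq \|\rho''\|_\infty$ for all $x$; applying the monotonicity of this scalar map to the value $\rho''(\Prox(z;b))$ yields the two-sided bound directly. Strict monotonicity and Lipschitz continuity of $z \mapsto \dual(z;b)$ then follow from the mean value theorem applied to these uniform bounds on the derivative.

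I do not anticipate a serious obstacle here: the entire argument is an immediate corollary of the implicit-function calculation already carried out for $\Prox$, combined with the one-line stationarity identity. The only point that requires even a moment of care is verifying that the upper bound in (\ref{eq:BracketPsiP}) uses $\|\rho''\|_\infty < \infty$ from the smoothness assumption on $\rho$, and that the lower bound is meaningful only when $\inf_x \rho''(x) > 0$ (i.e.\ under strong convexity), which is consistent with how the bound is stated.
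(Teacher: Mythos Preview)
Your proposal is correct and follows exactly the paper's approach: the paper's proof invokes the stationarity condition (\ref{eq:FirstOrderProx}) to obtain $\dual(z;b)=z-\Prox(z;b)$ and then says ``the other claims immediately follow by calculus,'' which is precisely the computation you have written out explicitly using the partial derivatives from Proposition~\ref{propo:Prox} and the monotonicity of $u\mapsto bu/(1+bu)$.
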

\begin{proof}
Using again the stationarity condition (\ref{eq:FirstOrderProx}) that
holds for $x=\Prox(z;b)$, we have
\begin{align}
\Prox(z;b)+b\rho'(\Prox(z;b)) -z = 0\, ,
\end{align}
which is our first claim. The other claims immediately follow by calculus.
\end{proof}

Finally, we prove that Eq.~(\ref{eq:BtDef}) that defines $b_t$ as a
function of $\tau_t$ always has at least one solution.
\begin{lemma}\label{lemma:BtDef}
For $\tau>0$ fixed, let $G:\reals_{>0}\to\reals$ be defined by
\begin{align}
G(b) \equiv \E\Big\{\dual'(W+\tau\,Z;b)\Big\}\, .
\end{align}
Then for any $a\in (0,1)$, the set of solutions 
\begin{align}
{\cal S}_a \equiv\big\{b\in\reals_{>0}:\;  G(b)= a\big\}\, ,
\end{align}
is closed and non-empty.
\end{lemma}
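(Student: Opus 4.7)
My plan is to show that $G:\reals_{>0}\to\reals$ is continuous on its domain with limits $\lim_{b\to 0^+}G(b)=0$ and $\lim_{b\to\infty}G(b)=1$, and then conclude by the intermediate value theorem; closedness of $\mathcal{S}_a$ will follow automatically as the preimage of a point under a continuous map.

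Step one is continuity of $G$. From Proposition \ref{propo:Phi} I already have the explicit formula
\begin{equation*}
\dual'(z;b) \;=\; \frac{b\,\rho''(\Prox(z;b))}{1+b\,\rho''(\Prox(z;b))}\,\in\,[0,1],
\end{equation*}
and by Proposition \ref{propo:Prox} the map $(z,b)\mapsto \Prox(z;b)$ is jointly continuous. Since $\rho''$ is bounded and continuous a.e., the integrand $\dual'(W+\tau Z;b)$ is a uniformly bounded, continuous function of $b$ for each fixed realization of $(W,Z)$, so dominated convergence yields continuity of $G$ on $(0,\infty)$.

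Step two is the two limits. As $b\to 0^+$, the uniform upper bound from Proposition \ref{propo:Phi}, namely $\dual'(z;b)\le b\|\rho''\|_\infty/(1+b\|\rho''\|_\infty)$, tends to $0$ uniformly in $z$, so $G(b)\to 0$. For $b\to\infty$ I want to argue that $\Prox(z;b)$ converges to a point $x^\star$ minimizing $\rho$: this follows because $x=\Prox(z;b)$ satisfies the stationarity equation $x+b\rho'(x)=z$ (see Eq.~(\ref{eq:FirstOrderProx})), so $\rho'(\Prox(z;b))=(z-\Prox(z;b))/b\to 0$, forcing $\Prox(z;b)$ into the zero set of $\rho'$. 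Provided $\rho''(x^\star)>0$ (which, as the footnote after Eq.~(\ref{eq:AMPb}) notes, is the strict-convexity condition under which $\dual'(z;\infty)=1$), the ratio in the display above converges pointwise to $1$, and another dominated convergence argument gives $G(b)\to 1$. Once both endpoint limits are in hand, the intermediate value theorem delivers some $b\in(0,\infty)$ with $G(b)=a$ for each $a\in(0,1)$, and $\mathcal{S}_a=G^{-1}(\{a\})$ is closed because $G$ is continuous and $\{a\}$ is closed.

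The one step I expect to require care is the $b\to\infty$ limit: if $\rho$ is merely convex and smooth (as in the blanket assumption of this appendix) without strict convexity at its minimizer, then $\rho''(x^\star)$ could vanish and $G$ could fail to attain values of $a$ close to $1$. The cleanest fix is to invoke the strict-convexity hypothesis from the footnote, under which $\dual'(z;\infty)=1$ holds pointwise and the limit $G(b)\to 1$ follows by dominated convergence. Alternatively, one could argue that because $\dual(z;b)=z-\Prox(z;b)$ converges to $z-x^\star$, $\dual$ becomes asymptotically affine in $z$ with slope $1$, and a Lusin-type smoothing argument recovers $G(b)\to 1$ directly without any pointwise statement about $\rho''(x^\star)$.
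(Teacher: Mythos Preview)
Your argument for continuity and for the limit $b\to 0^+$ matches the paper's. The issue is the limit $b\to\infty$. Your primary route requires $\rho''(x^\star)>0$ (and implicitly continuity of $\rho''$ at $x^\star$), but the lemma is stated under the blanket hypotheses of Appendix~\ref{app:Properties}: $\rho$ convex, bounded below, and smooth. Smoothness here only says $\psi'=\rho''$ exists a.e.\ and is bounded; it need not be positive at the minimizer, nor continuous there. So your main argument does not establish $G(b)\to 1$ in the generality claimed, and the alternative you sketch (``Lusin-type smoothing'') is not quite the right mechanism.

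The paper sidesteps this entirely with a Gaussian integration-by-parts (Stein's lemma): since $\tau>0$,
\[
G(b)=\E\big\{\dual'(W+\tau Z;b)\big\}=\frac{1}{\tau}\,\E\big\{Z\,\dual(W+\tau Z;b)\big\}\, .
\]
This replaces the troublesome $\dual'$ by $\dual$ itself, which is jointly continuous and, because $\dual(z;b)=z-\Prox(z;b)$, converges pointwise to $z-c_0$ with $c_0=\arg\min\rho$ as $b\to\infty$. The Lipschitz bound $|\dual'(\cdot;b)|\le 1$ gives an integrable envelope $|Z|\,(|W|+\tau|Z|+C)$ uniform in $b$, so dominated convergence yields
\[
\lim_{b\to\infty}G(b)=\frac{1}{\tau}\,\E\{Z(W+\tau Z-c_0)\}=1\, .
\]
This is exactly the missing idea: the Stein identity converts the derivative average into an expression whose limit is transparent, with no pointwise control on $\rho''$ needed. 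Your ``asymptotically affine with slope $1$'' observation is correct and is precisely what makes the Stein-lemma computation work; the smoothing you propose is unnecessary once the identity is invoked.
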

\begin{proof}
It follows immediately from the continuity properties of $\dual$ that
$b\mapsto G(b)$ is continuous. The claim follows by proving that 
$\lim_{b\to 0}G(b) = 0$ and $\lim_{b\to \infty}G(b) = 1$. 

By  Proposition \ref{propo:Phi} equation (\ref{eq:BracketPsiP})
 $0\le \dual'(z;b)\le 1$.
The limit $b\to 0$ follows from dominated convergence since, by 
the upper bound in (\ref{eq:BracketPsiP})
 $\lim_{b\to 0}\dual'(z;b) = 0$ for each $z$. 

In order to  obtain the limit as $b\to\infty$, note that by Stein Lemma:
\begin{align}
G(b) =\frac{1}{\tau}\E\Big\{Z\, \dual(W+\tau\,Z;b)\Big\}\, .
\end{align}
Since $0\le
\dual'(z,b)\le 1$, the integrand is  bounded in modulus by an
integrable quantity. We can therefore use again dominated
convergence. Now $\lim_{b\to\infty} \Prox(z;b) =
\arg\min_{x\in\reals}\rho(x)\equiv c_0$ and hence $\lim_{b\to\infty} \dual(z;b) =
z-c_0$. By dominated convergence we obtain  
\begin{align}
\lim_{b\to\infty}G(b) =\frac{1}{\tau}\E\Big\{Z\, (W+\tau Z-c_0)\Big\} = 1\, .
\end{align}
\end{proof}
%
%
%
%
%
%
%
%
\section{Proof of correctness of State Evolution (Theorem \ref{thm:StateEvolution})}
\label{app:StateEvolution}

We will show correctness of State Evolution for
the AMP algorithm using analytically defined $b_t$.
Namely, we suppose that 
with $b_t$ defined recursively as the smallest positive solution of 
the second equation in this system:
\begin{align}
\tau_{t+1}^2 &= \delta \; \E\Big\{\dual(W+\tau_t\, Z;b_t)^2\Big\}\, ,\\
\frac{1}{\delta} &= \E\Big\{\dual'(W+\tau_t\,Z;b_t)\Big\}\, .
\end{align}

For analysis purposes, we consider   a recursion equivalent to the AMP
recursion,  in which the data are recentered
and the recursion is recast around recentered variables.
We change the initial condition of the AMP
iteration by letting $\htheta^{{\rm cen},0} = \htheta^0-\theta_0$, and change
data by letting $Y^{{\rm cen}} = Y-\bX\theta_0\equiv W$. Applying the  AMP
recursion in these new coordinates gives the new
trajectory $\htheta^{{\rm cen},t} = \htheta^t-\theta_0$
for all $t$, and $\res^{{\rm cen},t} = \res^t$ for all $t$. 

The new trajectory follows the recursion
\begin{align}
\res^{{\rm cen},t} & = W-\bX\htheta^{{\rm cen},t}+\dual(\res^{{\rm cen},t-1};b_{t-1})\, ,\label{eq:AMP1again}\\
\htheta^{{\rm cen},t+1} & = \htheta^{{\rm cen},t}+\delta\bX^{\sT}\dual(\res^{{\rm cen},t};b_t)\, , \label{eq:AMP2again}
\end{align}

In this form, the recursion can be reduced to a recursion studied in \cite{BM-MPCS-2011},
for which State Evolution has been proven correct.
The reduction is to introduce a recursion generating iterates
 $\{\vt^t,S^t\}$ that approximates closely the iterates
$\{\htheta^{{\rm cen},t},\res^{{\rm cen},t}\}$ defined by (\ref{eq:AMP1again}),(\ref{eq:AMP2again}). The new sequence is defined by letting $\vt^0
= \htheta^0-\theta_0$ and, for all $t \ge 0$
\begin{align}
S^t & = -\bX\vt^t+\dual(W+S^{t-1};b_{t-1})\, , \label{eq:Simplified1}\\
\vt^{t+1} & = \delta\bX^{\sT}\dual(W+S^t;b_t)+q_t\vt^t\, ,\label{eq:Simplified2}
\end{align}
where 
\begin{align}
q_t = \delta\Big\{\frac{1}{n}\sum_{i=1}^n\dual'(W_i+S^t_i;b_t)\Big\}\, . 
\end{align}
The only difference between this recursion and the previous one
cf. Eqs.~(\ref{eq:AMP1again}), (\ref{eq:AMP2again}), lies in
the new coefficient $q_t$, which was identically equal to $1$ in the previous recursion.
The benefit of this specific recursion is that we already know that State Evolution is correct.
\begin{lemma}\label{lemma:ApplyingSE}
Under the assumptions of Theorem \ref{thm:StateEvolution}, we have,
for any fixed $t\ge 0$, 
\begin{align}
\lim_{n\to\infty} \frac{1}{p}\sum_{i=1}^p\test(\vt^t_i)=_{{\rm a.s.}} \, &
\E\{\test(\sqrt{\delta}\, \tau_t\, Z)\}\,    \label{eq:SE1}\\
\lim_{n\to\infty}\frac{1}{n}\sum_{i=1}^n\test_2(S^t_i,W_i)
=_{{\rm a.s.}} \, & \E\Big\{\test_2(\tau_t\, Z,W)\Big\}\,. \label{eq:SE2} 
\end{align}
\end{lemma}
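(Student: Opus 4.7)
The plan is to recognize Lemma \ref{lemma:ApplyingSE} as a direct application of the master state-evolution theorem for AMP-type recursions with Gaussian design established in \cite{BM-MPCS-2011}. The recursion (\ref{eq:Simplified1})--(\ref{eq:Simplified2}) is deliberately crafted so that the weight $q_t$ plays the role of the empirical divergence (``Onsager'') correction required by that master theorem. The first step is therefore to map $(\vt^t, S^t)$ onto the canonical two-sided recursion considered in \cite{BM-MPCS-2011}: a Gaussian matrix $\bX$ shuttles between a $p$-dimensional signal-side vector and an $n$-dimensional measurement-side vector, with coordinate-wise Lipschitz nonlinearities applied at each side. Here the measurement-side nonlinearity is $(s,w)\mapsto \dual(w+s;b_{t-1})$ applied to $S^{t-1}$ with $W$ as side information, while the signal-side ``nonlinearity'' is essentially the identity, with the term $q_t\vt^t$ serving as the memory correction.

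Next, I would verify the hypotheses of the master theorem. Proposition \ref{propo:Phi} provides the required regularity of the nonlinearities: for each fixed $b_t > 0$, the map $z\mapsto \dual(z;b_t)$ is Lipschitz with constant bounded by $1$ (indeed $\dual'\in[0,1]$), and it is smooth in $(z,b)$. The hypothesis that $W$ has finite second moment supplies the moment condition on the ``disturbance'' variable, and the assumption $\AMSE(\htheta^0,\theta_0) = \delta\tau_0^2$ supplies the required initial condition on $\vt^0$. Since the scalar sequence $b_t$ is defined deterministically through the state-evolution recursion, the nonlinearities $\dual(\,\cdot\,;b_t)$ at each iteration are honestly deterministic Lipschitz maps $\reals\to\reals$, so no adaptive variant of the BM framework is needed.

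Then I would apply the master theorem to deduce joint limits of the form
\begin{align}
\lim_n \frac{1}{p}\sum_{i=1}^p \test(\vt^t_i) &=_{\mathrm{a.s.}} \E\{\test(\sqrt{\delta}\,\tau_t\, Z)\}, \\
\lim_n \frac{1}{n}\sum_{i=1}^n \test_2(S^t_i, W_i) &=_{\mathrm{a.s.}} \E\{\test_2(\tau_t\, Z, W)\},
\end{align}
for pseudo-Lipschitz $\test,\test_2$, where the scalar limits $\{\tau_t\}$ are produced by the state-evolution recursion associated to the canonical form. The final task is algebraic: checking that the scalar recursion spit out by the master theorem coincides with $\tau_{t+1}^2 = \delta\,\E\{\dual(W+\tau_t Z;b_t)^2\}$. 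This follows because, per the BM setup, the limiting variance of the measurement-side iterate at step $t+1$ equals $\delta$ times the $L^2$-norm of the measurement-side nonlinearity evaluated on $(W,\tau_t Z)$. A useful byproduct is that $q_t \to \delta\,\E\{\dual'(W+\tau_t Z; b_t)\}$ almost surely, which by the defining equation for $b_t$ equals $1$; this asymptotic cancellation is exactly what will be used later to connect the $(\vt^t,S^t)$ recursion back to the original $(\htheta^{\mathrm{cen},t},\res^{\mathrm{cen},t})$ iterates.

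The main obstacle is bookkeeping rather than novel analysis: carefully matching normalizations, sign conventions, and the precise form of the Onsager correction between (\ref{eq:Simplified1})--(\ref{eq:Simplified2}) and the canonical recursion in \cite{BM-MPCS-2011}, and confirming that the pseudo-Lipschitz test-function class for which that theorem is stated is at least as broad as the one required here. Because the nonlinearity $\dual(\,\cdot\,;b_t)$ and its derivative are uniformly bounded (and the boundedness is uniform in $t$ for $b_t$ staying in a compact subset of $(0,\infty)$), and because $W$ has finite second moment, none of the integrability steps in the invocation of the master theorem presents any additional difficulty.
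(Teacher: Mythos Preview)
Your proposal is correct and follows essentially the same approach as the paper: the proof there is exactly an invocation of Theorem~2 in \cite{BM-MPCS-2011}, carried out by setting up a term-by-term dictionary between the recursion (\ref{eq:Simplified1})--(\ref{eq:Simplified2}) and the canonical AMP recursion of that reference, and then checking that the induced scalar state-evolution variables collapse to the single sequence $\{\tau_t^2\}$. The only substantive content is the bookkeeping you describe---matching $q_t$ to the Onsager coefficient, identifying the nonlinearities, and reconciling the normalizations---which the paper records in a small correspondence table.
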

\begin{proof}
This is an immediate application of Theorem 2 in \cite{BM-MPCS-2011}.
That Theorem considers general recursions which include (\ref{eq:Simplified1})-(\ref{eq:Simplified2})
as a special case,
and  corresponding state evolution equations, and shows the correctness of state
evolution, in the process establishing conclusions of the precise form shown in the conclusion
of this lemma. So it is simply a matter of establishing the correspondence of variables.

In the original notation of \cite{BM-MPCS-2011}, the generalized AMP recursions  studied are
\begin{align}
b^{t}      & =  A    q^t    - \lambda_t m^{t-1}  \, \label{eq:Invoke1} \\
h^{t+1} & =  A^* m^t - \xi_t q^t \,\label{eq:Invoke2}
\end{align}
where $b^{t}$, $h^t$, $q^t$ and $m^t$ are vectors and $\lambda_t$ and $\xi_t$
scalars. In addition, the vectors $q^t = f_t(h^t)$ and $m^t =g^t(b^t,w)$ are produced by element wise
applications of nonlinearities $f_t$ and $g_t$, the latter involving the random vector $w$.
Here $A$ is a rectangular $n \times N$ random matrix with iid Gaussian entries.
The scalars $\xi_t = \langle g'_t(b^t,w) \rangle$ and $\lambda_t = \frac{1}{\delta} \langle f'_t(h^t) \rangle$,
where $\langle \cdot \rangle$ denotes an empirical mean over the entries in a vector.
and the iteration takes $m^{-1}=0$. For state evolution, 
the Theorem 2 assumes the sequence of initial conditions $q^0$ obeys
\[
  \overline{\sigma}^2_0 = \lim_{N \goto \infty} \frac{1}{N \delta} = \| q^0 \|_2^2.
\] 
and the state evolution recursion involves the pair of variables
\[
      \overline{\tau}_t^2 = \E \{ g_t^2(\overline{\sigma}_t Z,W) \}, \qquad   \overline{\sigma}_t^2 = \E \{ f_t^2(\overline{\tau}_{t-1} Z) \}. 
\]
Table \ref{table:corrtable} sets up a  `dictionary' of correspondences between this
paper and \cite{BM-MPCS-2011}. 

\begin{table}[h!]
\begin{center}
\begin{tabular}{| c| c | c | c | c | c | c |}
\hline
(\ref{eq:Simplified1})                         & $\vt^{t+1}  = \delta\bX^{\sT}\dual(W+S^t;b_t)+q_t\vt^t$ & $\vt^{t+1}$ & $\bX^{\sT}$&  $\delta \dual(W+S^t;b_t)$ & $q_t$ & $\vt^t$ \\
(\ref{eq:Invoke1})& $h^{t+1}  =  A^* m^t - \xi_t q^t $ & $h^{t+1}$ & $A^*$ &  $m^t$  & $\xi_t$  & $- q^t$ \\ 
\hline
(\ref{eq:Simplified2})                         & $S^t = -\bX\vt^t+\dual(W+S^{t-1};b_{t-1})$ & $S^t $ & $\bX$ & $-\vt^t $ & 1 & $ \dual(W+S^{t-1};  b_{t-1})$ \\
(\ref{eq:Invoke2})& $ b^{t}      =  A    q^t    - \lambda_t m^{t-1}$ & $b^{t}  $ & $  A $  & $q^t$ & $- \lambda_t$ & $  m^{t-1}$ \\
\hline
\end{tabular}
\caption{Correspondences between terms in the recursions of this paper, (\ref{eq:Simplified1})-(\ref{eq:Simplified2}),
and the recursions (\ref{eq:Invoke1})-(\ref{eq:Invoke2}), analyzed in \cite{BM-MPCS-2011}.}
\label{table:corrtable}
\end{center}
\end{table}
We get exact correspondence between the two systems, provided we identify 
$\delta \dual(W+S^t;b_t)$  with $m^t = g_t(b^t ; w)$
and $-\delta h^t$ with $f_t(h^t)$. One has, in particular, that  $\lambda_t = \frac{1}{\delta} \langle f'_t(h^t) \rangle = - 1$,
and that $\xi_t = \langle g'_t(b^t,w) \rangle = \langle \delta \dual'(W+S^t;b_t) \rangle = q_t$.

In the \cite{BM-MPCS-2011} general study of state evolution, there are two state variables $\overline{\tau}_t$ and $\overline{\sigma}_t$.
However, when applied here, the distinction vanishes.
The  variable called $\overline{\tau}_t^2$ corresponds with $  \delta^2 \E \{ \Psi(W + \overline{\sigma}_t Z)^2\}$
while the variable $\overline{\sigma}_t^2$  corresponds with $\E (\vt^{t})^2$. Using facts about $\tau_t^2$ in this paper, we have the
identities $\overline{\sigma}_t^2 = \delta \tau_t^2$ and $\overline{\tau}_t^2 = \delta \tau_t^2$.
Equations (\ref{eq:SE1})-(\ref{eq:SE2}) now follow from Theorem 2 of \cite{BM-MPCS-2011}.
\end{proof}

Theorem \ref{thm:StateEvolution} now follows
from  the equivalence of the last two recursions -- i.e. equivalence of (\ref{eq:AMP1again})-(\ref{eq:AMP2again}) with
(\ref{eq:Simplified1})-(\ref{eq:Simplified2}).
\begin{lemma}\label{lemma:SimplifyingIteration}
Under the assumptions of Theorem \ref{thm:StateEvolution}, we have,
for any fixed $t\ge 0$,
\begin{align}
\lim_{n\to\infty} \frac{1}{p}\|\htheta^{{\rm cen},t}-\vt^t\|_2^2
=_{{\rm a.s}} \, 0\, ,
\;\;\;\;\;\;\;\;\;\; \lim_{n\to\infty} \frac{1}{n}\|\res^{{\rm
    cen},t}-S^t-W\|_2^2 =_{{\rm a.s.}} \, 0\, .
\end{align}
\end{lemma}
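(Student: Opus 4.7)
The plan is to proceed by induction on $t$, bounding the discrepancies
$\Delta^t_\theta := \htheta^{{\rm cen},t}-\vt^t \in \reals^p$ and
$\Delta^t_S := \res^{{\rm cen},t}-W-S^t \in \reals^n$ simultaneously. The base case $t=0$ is immediate from the matched initialization $\vt^0=\htheta^0-\theta_0=\htheta^{{\rm cen},0}$ and the initial conventions ($\res^{-1}=0$, $S^{-1}+W=0$), which give $\res^{{\rm cen},0}-W = -\bX\vt^0 = S^0$, hence $\Delta^0_\theta=\Delta^0_S=0$.

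For the inductive step, subtracting the recursions (\ref{eq:Simplified2}) and (\ref{eq:AMP2again}) gives the clean identity
\begin{align*}
\Delta^{t+1}_\theta \;=\; \Delta^t_\theta \;+\; (1-q_t)\,\vt^t \;+\; \delta\,\bX^{\sT}\bigl[\dual(W+S^t+\Delta^t_S;b_t)-\dual(W+S^t;b_t)\bigr],
\end{align*}
and similarly $\Delta^{t+1}_S = -\bX\,\Delta^{t+1}_\theta + \bigl[\dual(W+S^t+\Delta^t_S;b_t)-\dual(W+S^t;b_t)\bigr]$. Three ingredients control the right-hand sides: (i) the inductive hypothesis, which kills the $\Delta^t_\theta$ and $\Delta^t_S$ terms after normalizing by $\sqrt{p}$ and $\sqrt{n}$ respectively; (ii) the Lipschitz bound from Proposition \ref{propo:Phi}, namely $\|\dual(u;b_t)-\dual(v;b_t)\|_2 \le \|u-v\|_2$, which reduces the $\dual$-difference to $\|\Delta^t_S\|_2$; and (iii) standard almost-sure bounds on $\|\bX\|_{\rm op}$ for i.i.d.\ Gaussian design matrices (concentrated near $1+1/\sqrt{\delta}$). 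Combined with the a.s.\ boundedness of $\|\vt^t\|_2/\sqrt{p}$ (which follows from Lemma \ref{lemma:ApplyingSE} applied to $\test(x)=x^2$, yielding the limit $\delta\tau_t^2$), we obtain $\|\Delta^{t+1}_\theta\|_2/\sqrt{p}\to 0$ and $\|\Delta^{t+1}_S\|_2/\sqrt{n}\to 0$ a.s., provided we can also show $q_t\to 1$ a.s.

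Showing $q_t\to 1$ a.s.\ is the crux of the argument, and the main obstacle. By the deterministic choice of $b_t$, we have $\E\{\dual'(W+\tau_t Z;b_t)\}=1/\delta$, so the claim is exactly that Lemma \ref{lemma:ApplyingSE} applies to the test function $\test_2(r,w) := \dual'(r;b_t)$ (really a function of $r$ alone). The difficulty is that Lemma \ref{lemma:ApplyingSE} is stated for pseudo-Lipschitz test functions, while $\dual'(\,\cdot\,;b_t)$ need only be bounded and measurable under the smoothness hypothesis on $\rho$ (via Proposition \ref{propo:Phi}, since $\rho''$ need only exist a.e.). I would handle this either by a direct approximation argument (mollify $\dual'$ by a Lipschitz $\test_\eps$ with $\|\dual'-\test_\eps\|_{L^1(\mu_t)}\le \eps$, where $\mu_t$ is the law of $W+\tau_t Z$, use the uniform bound $|\dual'|\le 1$ to control the remainder uniformly in $n$ via empirical-process arguments, and send $\eps\to 0$), or by invoking a version of the BM-MPCS convergence valid for bounded continuous test functions. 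Since $S^t$ is close to $\res^{{\rm cen},t}-W$ by the inductive hypothesis, after this convergence is established for $S^t$, one can transfer it to the empirical average in $q_t$ computed along the true AMP trajectory at the end.

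With these three pieces in hand, the inductive step gives both claimed limits at iteration $t+1$, completing the proof. Apart from the technical point about applicability of state evolution to $\dual'$, every other estimate is elementary: operator-norm control on $\bX$, the built-in $1$-Lipschitz bound on $\dual$, and $L^2$-boundedness of $\vt^t$ supplied by state evolution itself.
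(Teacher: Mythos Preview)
Your proposal is correct and follows essentially the same approach as the paper: the paper subtracts the two recursions, uses the $1$-Lipschitz bound on $\dual(\,\cdot\,;b)$ and the almost-sure operator-norm bound on $\bX$ to obtain recursive inequalities, iterates them to a closed-form bound $\|\htheta^{{\rm cen},t}-\vt^t\|_2 \le (A\|\bX\|_2)^{2t}\sum_{\ell<t}|q_\ell-1|\,\|\vt^\ell\|_2$, and then invokes Lemma~\ref{lemma:ApplyingSE} (with exactly the approximation argument you outline for the non-Lipschitz $\dual'$, using that $W+\tau_t Z$ has a density) to get $q_\ell\to 1$ and $\|\vt^\ell\|_2/\sqrt{p}\to\tau_\ell$. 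One small note: $q_t$ is by definition computed along the $(\vt,S)$ trajectory, so the ``transfer'' step you mention at the end is unnecessary.
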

\subsection{Proof of Lemma \ref{lemma:SimplifyingIteration} (Equivalence of recursions)}

Throughout this proof, we will drop the superscript `cen' from $\res^{{\rm
    cen},t}$ and $\htheta^{{\rm cen},t}$. 
Define $S^t_+\equiv W+S^t$, whence
\begin{align}
S^t_+ & = W-\bX\vt^t+\dual(S_+^{t-1};b_{t-1})\, ,\\
\vt^{t+1} & = \delta\bX^{\sT}\dual(S^t_+;b_t)+q_t\vt^t\, ,\label{eq:VTeqProof}
\end{align}
Comparing the first of these equations with Eq.~(\ref{eq:AMP1again}),
and using triangular inequality, we get
\begin{align}
\|\res^{t}-S^t_+\|_2& \le \|\bX\|_2\|\htheta^t-\vt^t\|_2 +
\|\dual(\res^{t-1};b_{t-1}) - \dual(S_+^{t-1};b_{t-1}) \|_2\, \\
& \le \|\bX\|_2\|\htheta^t-\vt^t\|_2 +
\|\res^{t-1}- S_+^{t-1}\|_2\, ,\label{eq:BoundDifference1}
\end{align}
where the last inequality follows since
$\dual(\,\cdot\,;b):\reals\to\reals$  is Lipschitz continuous with
Lipschitz constant at most $1$, cf Proposition \ref{propo:Phi}.

Comparing analogously  Eq.~(\ref{eq:AMP2again}) and (\ref{eq:VTeqProof}), we obtain
\begin{align}
\|\htheta^{t+1}-\vt^{t+1}\|_2& \le \delta\|\bX\|_2\,
\|\dual(\res^t;b_t)
-\dual(S^t_+;b_t)\|_2+\|\htheta^t-\vt^t\|_2+|q_t-1|\,\|\vt^t\|_2\\
&\le \delta\|\bX\|_2\,
\|\res^t-S^t_+\|_2+\|\htheta^t-\vt^t\|_2+|q_t-1|\,\|\vt^t\|_2\, . \label{eq:BoundDifference2}
\end{align}
Iterating the upper bounds (\ref{eq:BoundDifference1}),
(\ref{eq:BoundDifference2}), and using the fact that $\vt^0=\htheta^0$,
we conclude that there exists a 
constant $A=A(\delta)<\infty$ such that
\begin{align}
\|\htheta^{t}-\vt^{t}\|_2\le (A\|\bX\|_2)^{2t}\sum_{\ell=0}^{t-1}|q_{\ell}-1|\,\|\vt^{\ell}\|_2\label{eq:Gromwall}
\end{align}
By Lemma \ref{lemma:ApplyingSE}, we have, almost surely
\begin{align}
\lim_{n\to\infty}\frac{1}{\sqrt{p}}\|\vt^{\ell}\|_2 =
\tau_{\ell}<\infty\, ,
\end{align}
and
\begin{align}
\lim_{n\to\infty}q_{t}&=
\delta\lim_{n\to\infty}\frac{1}{n}\sum_{i=1}^n\dual'(W_i+S^t_i;b_t)\\
&=
\delta\E\{\dual'(W+\tau_t Z;b_t) \}= 1\, , 
\end{align}
where the second identity follows from
Lemma \ref{lemma:ApplyingSE}
and, in 
the third, we used the definition of $b_t$.
(Note that we are applying here Lemma \ref{lemma:ApplyingSE} to
$\test(\,\cdot\,)=\dual'(\,\cdot\,;b_t)$ which is bounded and
non-negative but not necessarily continuous. However, since $W+\tau_t
Z$ has a density for every $\tau_t>0$, the limit holds by a standard
weak convergence argument, approximating $\test$ by simple functions.
Namely, we construct a sequence of simple functions $\xi_\ell$ such
that $\xi_{\ell}(t)\le \xi(t)\le \xi_{\ell}(t) +(1/\ell)$ for all $t$,
and apply Lemma \ref{lemma:ApplyingSE} --which implies weak convergence
of the empirical distribution of $\{W_i+S^t_i\}$-- to $\xi_{\ell}$.)

Finally, it is a standard result in random matrix theory
\cite{Guionnet} that $\lim_{n\to\infty}\|\bX\|_2 = C(\delta)<\infty$.
Hence, by taking the limit of Eq.~(\ref{eq:Gromwall}) we get, almost
surely,
\begin{align}
\lim_{n\to\infty}\frac{1}{\sqrt{p}}\|\htheta^{t}-\vt^{t}\|_2 = 0\, .
\end{align}
The norm $\|\res^{t}-S^t_+\|_2$ is then controlled using Eq.~(\ref{eq:BoundDifference1}). 

%
%
\section{Proof that AMP converges to the M-estimator (Theorem \ref{thm:Convergence})}

Notice first of all that, by construction, $\tau_t^2=\tau_*^2$, $b_t =
b_*$ for all $t$.

Given $\delta$, $\rho$ as in the statement of the theorem and
$\tau_*$, $b_*$ a solution of the fixed point equation
(\ref{eq:FixedPoint1}), (\ref{eq:FixedPoint2}), we define the doubly
infinite matrix
$\Corr = (\Corr_{t,s})_{t,s\ge 0}$ by letting, recursively for $t,s\ge 0$
\begin{align}
\Corr_{t+1,s+1} = \delta\E\{\dual(W+Z_t;b_*) \dual(W+Z_s;b_*)\}\, ,
\end{align}
where  the expectation is with respect to $(Z_t,Z_s)$ jointly
Gaussian, with zero means and covariance $\E\{Z_t^2\} = \Corr_{t,t}$,
$\E\{Z_s^2\} = \Corr_{s,s}$, $\E\{Z_tZ_s\} = \Corr_{t,s}$, independent
of $W\sim F_W$.
This is supplemented with the boundary condition $\Corr_{0,0} =
\tau_*^2$ and $\Corr_{0,t} = \Corr_{t,0}= 0$ for $t>0$.

Notice that, in particular, $\Corr_{s,t}= \Corr_{t,s}$ for all $s,t\ge
0$ and $\Corr_{t,t}=\tau_*^2$ for all $t$.

The significance of these quantities is clarified by the following 
result.
\begin{lemma}\label{lemma:TwoTimesSE}
Under the hypotheses of Theorem \ref{thm:StateEvolution}, further
assume that $\tau_*^2$ and $\Corr$ are defined as above.
Then, for any $t,s\ge 0$,
\begin{align}
\lim_{n\to\infty}\frac{1}{p}\sum_{i=1}^p\test_2(\htheta^t_{i}-\theta_{0,i},\htheta_i^s-\theta_{0,i})
& =_{{\rm a.s.}} \, \E\test_2(\sqrt{\delta}\, Z_t,\sqrt{\delta}\, Z_s)\, ,\\
\lim_{n\to\infty}\frac{1}{n}\sum_{i=1}^n\test_2(\res^t_{i}-W_i,\res_i^s-W_i)
& =_{{\rm a.s.}} \, \E\test_2( Z_t, Z_s)\, ,
\end{align}
where  the expectation is with respect to $(Z_t,Z_s)$ jointly
Gaussian, with zero means and covariance $\E\{Z_t^2\} = \Corr_{t,t}$,
$\E\{Z_s^2\} = \Corr_{s,s}$, $\E\{Z_tZ_s\} = \Corr_{t,s}$, independent
of $W\sim F_W$.
\end{lemma}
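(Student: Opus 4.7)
\bigskip
\noindent
\textbf{Proof proposal for Lemma \ref{lemma:TwoTimesSE}.}
The plan is to mimic exactly the architecture of the proof of Theorem \ref{thm:StateEvolution} given in Appendix \ref{app:StateEvolution}, but invoke the multi-time (joint-in-$t,s$) version of the general AMP state evolution result from \cite{BM-MPCS-2011}. The three-step structure is: (i) replace the AMP trajectory $(\htheta^t, \res^t)$ by its centered version $(\htheta^{{\rm cen},t}, \res^{{\rm cen},t})$; (ii) pass to the simplified recursion $(\vt^t, S^t)$ of Eqs.~(\ref{eq:Simplified1})--(\ref{eq:Simplified2}), for which the Bayati--Montanari framework directly applies; (iii) transfer the conclusion back to $(\htheta^t, \res^t)$ via the discrepancy bound already established in Lemma \ref{lemma:SimplifyingIteration}. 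Throughout, the hypothesis that $\AMSE(\htheta^0,\theta_0) = \delta \tau_*^2$ with $(\tau_*,b_*)$ a fixed point implies $\tau_t^2 \equiv \tau_*^2$ and $b_t \equiv b_*$, so the single-time marginal is stationary; the only new content is the \emph{cross-time} covariance $\Corr_{t,s}$.

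First, I would apply the general-purpose theorem of \cite{BM-MPCS-2011} to the simplified iteration (\ref{eq:Simplified1})--(\ref{eq:Simplified2}), using the same dictionary of correspondences displayed in Table \ref{table:corrtable} of Lemma \ref{lemma:ApplyingSE}. The Bayati--Montanari result is \emph{not} limited to one-time test functions: for any finite collection of iteration indices $0 \le t_1 < \cdots < t_k$ and any pseudo-Lipschitz $\xi_2 : \reals^{2k} \to \reals$, it establishes that the joint empirical averages $\frac{1}{p}\sum_i \xi_2(\vt_i^{t_1},\dots,\vt_i^{t_k})$ converge almost surely to $\E \xi_2(\sqrt{\delta}\, Z_{t_1},\dots,\sqrt{\delta}\, Z_{t_k})$ where $(Z_{t_1},\dots,Z_{t_k})$ is the jointly Gaussian vector whose covariance is the state evolution matrix, and analogously for the $S^t$ coordinates. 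Specializing to $k=2$ and recognizing that the two-time covariance recursion produced by that theorem, once translated via the dictionary, is exactly $\Corr_{t+1,s+1} = \delta\, \E\{\dual(W+Z_t;b_*)\dual(W+Z_s;b_*)\}$ with $\Corr_{0,0} = \tau_*^2$ and $\Corr_{0,t}=\Corr_{t,0}=0$ for $t>0$, gives the claim for $(\vt^t,\vt^s)$ and for $(S^t,S^s)$.

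Second, I would transfer the conclusion from the simplified iteration to the centered AMP iteration. Lemma \ref{lemma:SimplifyingIteration} already shows
\begin{align*}
\lim_{n\to\infty} \tfrac{1}{p}\|\htheta^{{\rm cen},t}-\vt^t\|_2^2 \;=_{{\rm a.s.}}\; 0, \qquad
\lim_{n\to\infty} \tfrac{1}{n}\|\res^{{\rm cen},t}-S^t-W\|_2^2 \;=_{{\rm a.s.}}\; 0,
\end{align*}
and the same bound holds for index $s$. Because $\test_2$ is pseudo-Lipschitz, a standard Cauchy--Schwarz estimate controls the difference
\[
\Big|\tfrac{1}{p}\!\sum_i \!\test_2(\htheta^{{\rm cen},t}_i,\htheta^{{\rm cen},s}_i) - \tfrac{1}{p}\!\sum_i \!\test_2(\vt^t_i,\vt^s_i)\Big|
\le L\!\left(1 + \tfrac{\|\htheta^{{\rm cen},t}\|_2 + \|\htheta^{{\rm cen},s}\|_2 + \|\vt^t\|_2 + \|\vt^s\|_2}{\sqrt{p}}\right)\!\cdot\!\tfrac{\|\htheta^{{\rm cen},t}-\vt^t\|_2 + \|\htheta^{{\rm cen},s}-\vt^s\|_2}{\sqrt{p}}.
\]
The prefactor in parentheses is almost-surely bounded by step one (the $\vt^t$ norms converge to $\sqrt{\delta}\,\tau_*$, and the $\htheta^{{\rm cen},t}$ norms to the same value by the discrepancy estimate), while the final factor tends to zero almost surely. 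The same argument applied to the residuals gives the second display.

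The main obstacle I anticipate is not the analytic content of any single step but rather a careful bookkeeping check: verifying that Theorem 2 of \cite{BM-MPCS-2011} in its multi-time formulation, transcribed through the dictionary in Table \ref{table:corrtable}, produces precisely the covariance recursion for $\Corr_{t,s}$ defined above (including the boundary conditions $\Corr_{0,0}=\tau_*^2$ and $\Corr_{0,t}=0$ for $t>0$, which come from the fact that $\vt^0=\htheta^0-\theta_0$ is deterministic and orthogonal to the randomness injected by $\bX$ at later steps). Once this identification is made, the rest of the argument is a routine extension of the Appendix \ref{app:StateEvolution} machinery from single-time to two-time observables, with the pseudo-Lipschitz transfer step being essentially automatic given the $L^2$-closeness already proven.
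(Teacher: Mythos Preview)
Your proposal is correct, and in fact the paper explicitly acknowledges that your route works: it remarks that ``a similar statement is proved in \cite[Theorem~4.2]{BayatiMontanariLASSO}'' and that ``the same argument can be followed here.'' However, the paper then deliberately takes a different path. Rather than invoking the multi-time state evolution of \cite{BM-MPCS-2011} directly, the paper \emph{embeds} the entire collection of iterates $\{\vt^0,\dots,\vt^T\}$ and $\{S^0,\dots,S^T\}$ into a single vector-valued AMP recursion over $(\reals^q)^N$ with $q=2(T+1)$ and a symmetric matrix $A\in\reals^{N\times N}$ (with $N=n+p$) built from $\bX$, and then applies the general symmetric-AMP state evolution result of \cite{JM-StateEvolution}. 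In that framework the two-time covariance $\Corr_{t,s}$ falls out of the \emph{single-time} covariance structure of the vector-valued limiting Gaussian. Your approach is more direct and arguably cleaner for the purpose at hand, requiring only the dictionary already set up in Table~\ref{table:corrtable} plus the pseudo-Lipschitz transfer via Lemma~\ref{lemma:SimplifyingIteration}; the paper's embedding approach, by contrast, illustrates a reduction technique of independent interest (collapsing multi-time questions into a single iteration of a higher-dimensional AMP) and relies on the more recent and more general framework of \cite{JM-StateEvolution}. Either way, the reduction to the simplified recursion $(\vt^t,S^t)$ and the final $L^2$ transfer step are identical in both treatments.
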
 
The proof is deferred to Section \ref{sec:TwoTimesSE}.

As a special case of the latter result, we have
\begin{align}
\lim_{n\to\infty}\frac{1}{p}\|\htheta^{t}-\htheta^s\|_2^2 &\, =_{a.s.} \,
2\delta\,\big(\tau_*^2-\Corr_{t,s}\big)\, ,\label{eq:DiffTheta}\\
\lim_{n\to\infty}\frac{1}{n}\| \res^{t}-\res^s\|_2^2 &\, =_{a.s.} \,
2\,\big(\tau_*^2-\Corr_{t,s}\big)\, .\label{eq:DiffR}
\end{align}

The following lemma provides information about the asymptotic behavior
of $\Corr_{t,s}$.  Its proof is deferred to Section \ref{sec:ProofCorrBehavior}.
\begin{lemma}\label{lemma:CorrBehavior}
Let $\tau_*$, $\Corr$ be defined as above for $\delta>1$. Then
\begin{align}
\lim_{t\to\infty} \Corr_{t,t+1} = \tau_*^2
\end{align}
\end{lemma}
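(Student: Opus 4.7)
The plan is to reduce the statement to iteration of a scalar contraction map. First, I would observe that the on-diagonal values satisfy $\Corr_{t,t} = \tau_*^2$ for every $t \ge 0$: this holds by induction starting from $\Corr_{0,0} = \tau_*^2$, using the recursion with $s = t$ together with the fixed-point identity (\ref{eq:FixedPoint1}). Setting $\alpha_t \equiv \Corr_{t,t+1}$, the off-diagonal recursion then collapses to the scalar dynamical system $\alpha_{t+1} = g(\alpha_t)$, where
\[
   g(\alpha) \;\equiv\; \delta\, \E\big\{\dual(W + Z_1; b_*)\, \dual(W + Z_2; b_*)\big\},
\]
with $(Z_1,Z_2)$ a centered bivariate Gaussian of common variance $\tau_*^2$ and covariance $\alpha$, independent of $W \sim F_W$. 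The boundary condition $\Corr_{0,s} = 0$ for $s > 0$ gives $\alpha_0 = 0$, while Eq.~(\ref{eq:FixedPoint1}) yields the fixed-point identity $g(\tau_*^2) = \tau_*^2$. The goal is to show $\alpha_t \to \tau_*^2$.

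Next I would apply Price's theorem (differentiation of a joint Gaussian expectation with respect to the off-diagonal covariance) to obtain
\[
   g'(\alpha) \;=\; \delta\, \E\big\{\dual'(W + Z_1; b_*)\, \dual'(W + Z_2; b_*)\big\}.
\]
Cauchy--Schwarz, together with equality of the two marginals, gives $g'(\alpha) \le \delta\, \E\{\dual'(W+Z; b_*)^2\}$. Proposition \ref{propo:Phi}, equation (\ref{eq:BracketPsiP}), provides the uniform bound $0 \le \dual'(\,\cdot\,;b_*) \le L$, where $L \equiv b_* \|\rho''\|_\infty / (1 + b_* \|\rho''\|_\infty) < 1$, so that $\dual'(\,\cdot\,;b_*)^2 \le L\, \dual'(\,\cdot\,;b_*)$ pointwise. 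Combining this with the second fixed-point equation (\ref{eq:FixedPoint2}),
\[
   g'(\alpha) \;\le\; \delta L\, \E\{\dual'(W+Z; b_*)\} \;=\; \delta L \cdot \frac{1}{\delta} \;=\; L \;<\; 1.
\]
A further application of Cauchy--Schwarz shows that $g$ maps $[-\tau_*^2, \tau_*^2]$ into itself, so $g$ is an $L$-Lipschitz self-map of this interval.

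To finish, since $\tau_*^2$ is a fixed point of the contraction $g$, Banach's theorem gives $|\alpha_{t+1} - \tau_*^2| \le L\, |\alpha_t - \tau_*^2|$ for every $t$, and consequently $\alpha_t \to \tau_*^2$ geometrically fast, which is the desired conclusion. The main technical subtlety I anticipate is the rigorous justification of Price's theorem under only the smoothness assumptions imposed on $\rho$ (which yield an a.e.\ second derivative for $\dual$), but this can be handled by mollifying $\dual(\,\cdot\,;b_*)$, applying Price's formula to the smoothed version, and passing to the limit via dominated convergence using the uniform bounds on $\dual$ and $\dual'$ supplied by Propositions \ref{propo:Prox} and \ref{propo:Phi}.
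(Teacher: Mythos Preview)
Your argument is correct and takes a genuinely different route from the paper. Both proofs reduce to the scalar recursion $q_{t+1}=H(q_t)$ (your $g$ is the paper's $\tau_*^2\,H(\cdot/\tau_*^2)$) and both use the fixed-point identities (\ref{eq:FixedPoint1})--(\ref{eq:FixedPoint2}) at the key step. The paper, however, proves convergence by establishing that $H$ is increasing and strictly convex on $[0,1]$---via the Ornstein--Uhlenbeck spectral representation $\cH(q)=\sum_\ell c_\ell^2 q^\ell$---and then showing $H'(1)\le 1$ using only $(\dual')^2\le\dual'$; convexity together with $H'(1)\le 1$ forces $H(q)>q$ on $[0,1)$, so the iterates increase to $1$. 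You instead bound the derivative \emph{uniformly}: Price's theorem gives $g'(\alpha)=\delta\,\E\{\dual'\dual'\}$, and Cauchy--Schwarz plus the sharper pointwise bound $\dual'\le L<1$ from (\ref{eq:BracketPsiP}) yields $0\le g'(\alpha)\le L$ everywhere, so $g$ is a strict contraction and Banach applies. Your approach is more elementary (no spectral theory) and delivers a quantitative geometric rate $|\alpha_t-\tau_*^2|\le L^t\tau_*^2$; it does lean on the smoothness assumption $\|\rho''\|_\infty<\infty$ to obtain $L<1$, whereas the paper's convexity argument would still go through with only $\dual'<1$. Your remark about justifying Price's formula under the given regularity is well placed; the mollification you propose is standard and works.
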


Applying this result to Eqs.~(\ref{eq:DiffTheta}) and (\ref{eq:DiffR})
we get,
for any fixed $h\in\naturals$,
\begin{align}
\lim_{t\to\infty}\lim_{n\to\infty}\frac{1}{p}\|\htheta^{t+h}-\htheta^t\|_2^2
&\, =_{a.s.} \, 0\, ,\label{eq:ConvergenceT}\\
\lim_{t\to\infty}\lim_{n\to\infty}\frac{1}{p}\|\res^{t+h}-\res^t\|_2^2
&\, =_{a.s.} \, 0\, .\label{eq:ConvergenceR}
\end{align}
(The case $h>1$ follows from $h=1$ by the triangle inequality.)

We are now ready to prove Theorem \ref{thm:Convergence}. 
Recall that $\cL(\theta) = \cL(\theta;Y,\bX)$ denotes the loss
function defined in Eq.~(\ref{eq:Mestimation}), and that its gradient
and Hessian are given by
\begin{align}
\nabla_{\theta}\cL(\theta) & =
-\sum_{i=1}^n\rho'(Y_i-\<X_i,\theta\>)\, X_i\, ,\\
\nabla^2_{\theta}\cL(\theta) & =
\sum_{i=1}^n\rho''(Y_i-\<X_i,\theta\>)\, X_iX_i^{\sT}\, .
\end{align}
In particular, letting $\sigma_{\rm min}(\bX)$ denote the minimum
non-zero singular value of $\bX$, we have
\begin{align}
\lambda_{\rm min} (\nabla^2_{\theta}\cL(\theta)) \ge \inf_{x\in
  \reals}\rho''(x)\, \cdot\sigma_{\rm min}(\bX)^2\, .
\end{align}
Using the hypothesis of strong convexity and standard concentration of
measure for the singular values of Wishart matrices
\cite{Vershynin-CS}, these exists constants $c_0,c_1, n_0>0$ for $\delta>1$
such that for any $n\ge n_0$, 
\begin{align}
\prob\Big(\nabla^2_{\theta}\cL(\theta)\succeq c_0\, \id\;\;\;\; \forall
\theta\in\reals^p\Big) \ge \, 1-e^{-c_1\, n}\, .
\end{align}

As a consequence, with probability at least $1-e^{-c_1n}$, we have 
\begin{align}
\cL(\htheta^t) \ge \cL(\htheta)\ge \cL(\htheta^t)
+\<\nabla_{\theta}\cL(\htheta^t),\htheta-\htheta^t\>
+\frac{1}{2}\, c_0\, \|\htheta-\htheta^t\|_2^2\, . 
\end{align}
Hence using Cauchy-Schwartz
\begin{align}
\|\htheta-\htheta^t\|_2\le \frac{2}{c_0}\,
\|\nabla_{\theta}\cL(\htheta^t)\|_2\, .
\end{align}
The last step of the proof consists in showing  that, almost surely
\begin{align}
\lim_{t\to\infty}\lim_{n\to\infty}\frac{1}{p}\|\nabla_{\theta}\cL(\htheta^t)\|_2^2 =
0\, .\label{eq:GradToZero} 
\end{align}

In order to prove this claim, reconsider Eq.~(\ref{eq:AMP1}), for time
$t+1$, with
$b_t=b_*$. Using the fact that $\dual(z;b_*) = z-\Prox(z;b_*)$, this
can be rewritten as
\begin{align}
\Prox(\res^{t};b_*) = Y-\bX\htheta^{t+1}+\res^t-\res^{t+1}\, .\label{eq:ProxEndProof}
\end{align}
By Eq.~(\ref{eq:AMP2}), and recalling that $\dual(z;b) =
b\,\rho'(\Prox(z;b))$,  we have
\begin{align}
\frac{1}{b_*\delta}\,
\big(\htheta^{t+1}-\htheta^t\big)&= \bX^{\sT} \rho'(\Prox(\res^t;b_*)) \\
&=\bX^{\sT} \rho'\Big( Y-\bX\htheta^{t+1}+\res^t-\res^{t+1}\Big)  \, ,
\end{align}
where the last identity followed by Eq.~(\ref{eq:ProxEndProof}). Using
the triangle inequality and noting that, by the smoothness assumption 
$C\equiv \sup_{z\in\reals}\rho''(z)<\infty$, we get
\begin{align}
\|\bX^{\sT} \rho'\Big( Y-\bX\htheta^{t+1})\|_2
\le \frac{1}{b_*\delta}\,
\|\htheta^{t+1}-\htheta^t\|_2+ C\|\bX\|_2\|\res^t-\res^{t+1}\|_2\, .
\end{align}
Hence, using Eqs~(\ref{eq:ConvergenceT}) and (\ref{eq:ConvergenceT}),
and recalling that $\lim_{n\to\infty}\|\bX\|_2<\infty$ almost surely \cite{Guionnet},
we get
\begin{align}
\lim_{t\to\infty}\lim_{n\to\infty}\frac{1}{p}\|\bX^{\sT} \rho'\Big(
Y-\bX\htheta^{t+1})\|_2^2 = 0\, .
\end{align}
This is equivalent to the claim (\ref{eq:GradToZero}) since
$\nabla_{\theta}\cL(\theta) = -\bX\rho'(Y-\bX\theta)$.

\subsection{Proof of Lemma \ref{lemma:TwoTimesSE}}
\label{sec:TwoTimesSE}

First of all note that, due to Lemma \ref{lemma:SimplifyingIteration},
it is sufficient to prove that
\begin{align}
\lim_{n\to\infty}\frac{1}{p}\sum_{i=1}^p\test_2(\vt^t_{i},\vt_i^s)
& =_{a.s.} \, \E\test_2(\sqrt{\delta}\, Z_t,\sqrt{\delta}\, Z_s)\, ,\\
\lim_{n\to\infty}\frac{1}{n}\sum_{i=1}^n\test_2(s^t_{i},S_i^s)
& =_{a.s.} \, \E\test_2( Z_t, Z_s)\, .
\end{align}
Note that a similar statement is proved in \cite[Theorem
4.2]{BayatiMontanariLASSO} for characterizing the Lasso estimator. 
While the same argument can be followed here, we outline an
alternative argument that is based on a reduction to the setting of
\cite{JM-StateEvolution}.

We fix an even number $q\in \naturals$, and will prove the claim for
all $t,s\le T\equiv (q/2)-1$. 
Let $N \equiv n+p$.
For $t\in \{ 0,\dots, T\}$, we introduce a vector $z^t\in(\reals^q)^N$,
which we think of as a vector with entries in $\reals^q$:
$z^{t}=(\vz^t_1,\dots,\vz^t_N)$ $\vz^t_i\in\reals^q$. Its entries are defined as
follows:
\begin{align}
\vz^t_i &=
(S^0_i,0,S^{1}_i,0,S^{2}_i,0,\dots\;,S^t_i,0,0,0,\dots, 0)  &\mbox{if
  $1\le i\le n$,}\\
\vz^{t+1}_i &=
(0,\vt^1_j,0,\vt^{2}_j,0,\vt^{3}_j,\dots,0,\vt^{t+1}_i,0,0,0,\dots, 0) & \mbox{if $n+1\le i=j+n\le n+p$.}
\end{align}
Further, we let 
$A\in\reals^{N\times N}$ be a symmetric matrix with $A_{ii} = 0$,
$A_{ij} = \sqrt{n/N}\, X_{i,j-n}$ for $1\le i\le n$ and $n+1\le j\le
n+p$, and all the other entries $A_{ij}$ $i<j$
i.i.d. $\normal(0,1/N)$.
It is then easy to see that the iteration in
Eqs.~(\ref{eq:Simplified1}), (\ref{eq:Simplified2}) is equivalent to
the following
\begin{align}
z^{t+1} = A\, f(z^t;t) -\sB_t\, f(z^{t-1};t-1)\, . \label{eq:MultiTimeAMP}
\end{align}
Here, for each $t$, $f(\,\cdot\,;t):(\reals^q)^N\to(\reals^q)^N$ is
separable in the following sense
\begin{align}
f(z;t) = (f_1(\vz_1;t);f_2(\vz_2;t);\dots;f_N(\vz_N;t))\, ,
\end{align}
with $f_i(\,\cdot\,;t):\reals^q\to\reals^q$.
These are defined as follows (letting $\dual_{t,i}(x) = \dual(W_i+x;b_t)$ and
$h = \sqrt{(1+\delta)/\delta}$)
\begin{align}
f_i(\vz_i;t) &=
(0,\delta h\,\dual_{0,i}(\vz_{i,1}),0,\delta h\,\dual_{1,i}(\vz_{i,3}),0,
\dots,0,\delta h\,\dual_{t,i}(\vz_{i,2t-1}),0,0,0,\dots,0) & \mbox{if  $1\le i\le n$,}\\
f_i(\vz_i;t) &=
(0,0,-h\,\vz_{i,2},0,-h\,\vz_{i,4},0,
\dots,- h\,\vz_{i,2t},0,0,0,0,\dots,0) &  \mbox{if  $n+1\le i\le n+p$.}
\end{align}
The matrix multiplication in Eq.~(\ref{eq:MultiTimeAMP}) operates in
the natural way over $(\reals^{q})^N$, namely we identified $A$ with
the Kronecker product $A\otimes \id_{q\times q}$. Explicitly,
Eq.~(\ref{eq:MultiTimeAMP}) reads
\begin{align}
\vz^{t+1}_i = \sum_{j\in [N]}A_{ij}\, f_j(\vz_j^t;t) -\sB_t\,
f_i(\vz_i^{t-1};t-1)\, . 
\end{align}
Finally $\sB_t\in\reals^{q\times q}$ is given by
\begin{align}
\sB_t = \frac{1}{N}\sum_{i=1}^N\frac{\partial f_i}{\partial
  \vz}(\vz^t_i;t)\, .
\end{align}
 
The recursion (\ref{eq:MultiTimeAMP}) is characterized in
\cite[Theorem 1]{JM-StateEvolution}, which establishes --for instance--
that, for $\test:\reals^q\to\reals$ pseudo-Lipschitz, we have,
almost surely,
\begin{align}
\lim_{N\to\infty}\frac{1}{p}\sum_{i=n+1}^{n+p} \test(\vz_i^t) =
\E\{\test({\mathbf Z}_t)\}\, .
\end{align}
Here ${\mathbf Z}_t$ is a Gaussian random vector whose covariance is
fully specified in \cite{JM-StateEvolution}. The proof of the lemma is
finished by comparing the
expressions in \cite{JM-StateEvolution} for the covariance wit the
ones in the statement of the lemma.

\subsection{Proof of Lemma \ref{lemma:CorrBehavior}}
\label{sec:ProofCorrBehavior}

First of all we introduce the notation $q_t \equiv
\Corr_{t,t+1}/\tau_*^2$.
We then have the recursion
\begin{align}
q_{t+1} &= \sH(q_t)\, ,\\
\sH(q) & = \frac{\delta}{\tau_*^2}\, \E_q\{\dual(W+\tau_*\, Z_1;b_*) \dual(W+\tau_*\,Z_2;b_*)\}\, ,
\end{align}
where expectation $\E_q$ is with respect to the centered Gaussian vector
$(Z_1,Z_2)$ with $\E_q\{Z_1^2\}=\E_q\{Z_2^2\} = 1$ and
$\E_q\{Z_1Z_2\}= q$, independent of $W\sim F_W$.
We claim that:
\begin{enumerate}
\item[$(i)$] $\sH(1) = 1$;
\item[$(ii)$] $\sH(q)$ is increasing for $q\in[0,1]$;
\item[$(iii)$] $\sH(q)$ is strictly convex for $q\in [0,1]$.
\end{enumerate} 
In order to prove $(i)$, note that, for $q=1$, $Z_1=Z_2\equiv Z\sim
\normal(0,1)$ and hence
\begin{align}
\sH(1)  = \frac{\delta}{\tau_*^2}\, \E_q\{\dual(W+\tau_*\,
Z;b_*)^2\}\, ,
\end{align}
which is equal to $1$ since $b_*$, $\tau_*$ satisfy
Eq.~(\ref{eq:FixedPoint1}). 

In order to prove $(ii)$, $(iii)$, define
\begin{align}
h_W(z) & \equiv \dual(W+\tau_*\, z ;  b_*)\, ,\\
\cH(q) & \equiv \E_q\{h_W(Z_1) h_W(Z_2)|W\}\, ,
\end{align}
We will prove that $\cH$ is strictly increasing and convex for any
$W$, whence claims $(ii)$ and $(iii)$ follow by linearity. The
argument is the same as in \cite[Lemma C.1]{BayatiMontanariLASSO}
Let $\{X_t\}_{t\ge 0}$ be the stationary Ornstein--Uhlenbeck process with
covariance $\E(X_0X_t) = e^{-t}$, and
denote by $\sE$ expectation with respect to $X$. Then 
\begin{align}
\cH(q) = \sE\{h_W(X_0) h_W(X_t)\}\Big|_{t= \log(1/q)}\, ,
\end{align}
Then we have the spectral representation (for $t= \log(1/q)$)
\begin{align}
\cH(q) = \sum_{\ell = 0}^{\infty}c_{\ell}^2\, e^{-\ell\, t} =
\sum_{\ell = 0}^{\infty}c_{\ell}^2\, q^{\ell}\, ,
\end{align}
whence the claim follows since $c_{\ell}\neq 0$ for some $\ell\ge 2$
as long as $h_W(x)$ is non-linear. 

Because of the remarks $(i)$-$(iii)$ just proven, it follows that
$\lim_{t\to\infty}q_t= 1$ (and hence $\lim_{t\to\infty}\Corr_{t,t+1} =
\tau_*^2$) if and only if $\sH'(1)\le 1$. A simple calculation yields
\begin{align}
\sH'(1) = \delta\, \E\big\{\dual'(W+\tau_*\,
Z;b_*)^2\big\}\, ,
\end{align}
where $Z\sim\normal(0,1)$. Recalling that
 $\dual'(z;b)\in (0,1)$, we have $(\dual')^2 \leq \dual'$
and so
\begin{align}
\sH'(1) \le \delta\, \E\big\{\dual'(W+\tau_*\,
Z;b_*)\big\}=1\, ,
\end{align}
where the last identity follows because $(\tau_*,b_*)$ solve
Eq.~(\ref{eq:FixedPoint2}). This finishes the proof.

%
\bibliographystyle{amsalpha}

\begin{thebibliography}{EKBBL13}

\bibitem[AGZ09]{Guionnet}
G.~W. Anderson, A.~Guionnet, and O.~Zeitouni, \emph{{An introduction to random
  matrices}}, Cambridge University Press, 2009.

\bibitem[Bic75]{bickel1975}
Peter~J Bickel, \emph{One-step huber estimates in the linear model}, Journal of
  the American Statistical Association \textbf{70} (1975), no.~350, 428--434.

\bibitem[BLM12]{BM-Universality}
Mohsen Bayati, Marc Lelarge, and Andrea Montanari, \emph{Universality in
  polytope phase transitions and message passing algorithms}, {\sf
  arXiv:1207.7321} (2012).

\bibitem[BM11]{BM-MPCS-2011}
M.~Bayati and A.~Montanari, \emph{{The dynamics of message passing on dense
  graphs, with applications to compressed sensing}}, IEEE Trans. on Inform.
  Theory \textbf{57} (2011), 764--785.

\bibitem[BM12]{BayatiMontanariLASSO}
\bysame, \emph{{The LASSO risk for gaussian matrices}}, IEEE Trans. on Inform.
  Theory \textbf{58} (2012), 1997--2017.

\bibitem[CD95]{BP95}
S.S. Chen and D.L. Donoho, \emph{{Examples of basis pursuit}}, Proceedings of
  Wavelet Applications in Signal and Image Processing III (San Diego, CA),
  1995.

\bibitem[Cha03]{Hyperspectral}
C-I. Chang, \emph{{Hyperspectral Imaging: Techniques for Spectral Detection and
  Classification}}, Springer, 2003.

\bibitem[DJMM11]{donoho2011compressed}
David Donoho, Iain Johnstone, Arian Maleki, and Andrea Montanari,
  \emph{Compressed sensing over $\ell_p$-balls: Minimax mean square error},
  Information Theory Proceedings (ISIT), 2011 IEEE International Symposium on,
  IEEE, 2011, pp.~129--133.

\bibitem[DMM09]{DMM09}
D.~L. Donoho, A.~Maleki, and A.~Montanari, \emph{{Message Passing Algorithms
  for Compressed Sensing}}, Proceedings of the National Academy of Sciences
  \textbf{106} (2009), 18914--18919.

\bibitem[DMM11]{DMM-NSPT-11}
D.L. Donoho, A.~Maleki, and A.~Montanari, \emph{{The Noise Sensitivity Phase
  Transition in Compressed Sensing}}, IEEE Trans. on Inform. Theory \textbf{57}
  (2011), 6920--6941.

\bibitem[DT09]{donoho2009observed}
David Donoho and Jared Tanner, \emph{Observed universality of phase transitions
  in high-dimensional geometry, with implications for modern data analysis and
  signal processing}, Philosophical Transactions of the Royal Society A:
  Mathematical, Physical and Engineering Sciences \textbf{367} (2009),
  no.~1906, 4273--4293.

\bibitem[EKBBL13]{karoui2013robust}
Noureddine El~Karoui, Derek Bean, Peter~J Bickel, and Bin Lim, Chingwayand~Yu,
  \emph{On robust regression with high-dimensional predictors}, Proceedings of
  the National Academy of Sciences \textbf{110} (2013), no.~36, 14557--14562.

\bibitem[HR09]{HuberBook}
P.J. Huber and E.~Ronchetti, \emph{Robust statistics (second edition)}, J.
  Wiley and Sons, 2009.

\bibitem[Hub64]{HuberMinimax}
P.J. Huber, \emph{Robust estimation of a location parameter}, The Annals of
  Mathematical Statistics \textbf{35} (1964), no.~1, 73--101.

\bibitem[Hub73]{huber1973robust}
Peter~J Huber, \emph{Robust regression: asymptotics, conjectures and monte
  carlo}, The Annals of Statistics \textbf{1} (1973), no.~5, 799--821.

\bibitem[JM12]{JM-StateEvolution}
A.~Javanmard and A.~Montanari, \emph{{State Evolution for General Approximate
  Message Passing Algorithms, with Applications to Spatial Coupling}}, {\sf
  arXiv:1211.5164v1}, 2012.

\bibitem[KMZ13]{krzakala2013phase}
Florent Krzakala, Marc M{\'e}zard, and Lenka Zdeborov{\'a}, \emph{Phase diagram
  and approximate message passing for blind calibration and dictionary
  learning}, arXiv preprint arXiv:1301.5898 (2013).

\bibitem[LDSP08]{CSMRI}
M.~Lustig, D.L. Donoho, J.M. Santos, and J.M. Pauly, \emph{Compressed sensing
  mri}, IEEE Signal Processing Magazine \textbf{25} (2008), 72--82.

\bibitem[MB07]{BarronMadiman2007}
Mokshay Madiman and Andrew Barron, \emph{Generalized entropy power inequalities
  and monotonicity properties of information}, Information Theory, IEEE
  Transactions on \textbf{53} (2007), no.~7, 2317--2329.

\bibitem[MM09]{MezardMontanari}
M.~M{\'e}zard and A.~Montanari, \emph{{Information, Physics and Computation}},
  Oxford, 2009.

\bibitem[Mon12]{MontanariChapter}
A.~Montanari, \emph{{Graphical Models Concepts in Compressed Sensing}},
  Compressed Sensing: Theory and Applications (Y.C. Eldar and G.~Kutyniok,
  eds.), Cambridge University Press, 2012.

\bibitem[MPV87]{SpinGlass}
M.~M\'ezard, G.~Parisi, and M.~A. Virasoro, \emph{Spin glass theory and
  beyond}, World Scientific, 1987.

\bibitem[Ran11]{RanganGAMP}
S.~Rangan, \emph{{Generalized Approximate Message Passing for Estimation with
  Random Linear Mixing}}, IEEE Intl. Symp. on Inform. Theory (St. Perersbourg),
  August 2011.

\bibitem[Ric05]{Radar}
M.A. Richards, \emph{{Fundamentals of Radar Signal Processing}}, McGraw-Hill,
  2005.

\bibitem[Sca97]{Seismic}
J.A. Scales, \emph{{Theory of Seismic Imaging}}, Samizdat Press, 1997.

\bibitem[Sch10]{SchniterTurbo}
P.~Schniter, \emph{{Turbo Reconstruction of Structured Sparse Signals}},
  Proceedings of the Conference on Information Sciences and Systems
  (Princeton), 2010.

\bibitem[SSS10]{Schniter-NonUniform-2010}
L.C.~Potter S.~Som and P.~Schniter, \emph{{On Approximate Message Passing for
  Reconstruction of Non-Uniformly Sparse Signals}}, Proceedings of the National
  Aereospace and Electronics Conference (Dayton, OH), 2010.

\bibitem[ST03]{shcherbina2003rigorous}
Mariya Shcherbina and Brunello Tirozzi, \emph{Rigorous solution of the gardner
  problem}, Communications in mathematical physics \textbf{234} (2003), no.~3,
  383--422.

\bibitem[Tal10]{TalagrandVolI}
M.~Talagrand, \emph{Mean field models for spin glasses: Volume i},
  Springer-Verlag, Berlin, 2010.

\bibitem[Tib96]{Tibs96}
R.~Tibshirani, \emph{{Regression shrinkage and selection with the Lasso}}, J.
  Royal. Statist. Soc B \textbf{58} (1996), 267--288.

\bibitem[Ver12]{Vershynin-CS}
R.~Vershynin, \emph{Introduction to the non-asymptotic analysis of random
  matrices}, Compressed Sensing: Theory and Applications (Y.C. Eldar and
  G.~Kutyniok, eds.), Cambridge University Press, 2012, pp.~210--268.

\end{thebibliography}
\providecommand{\bysame}{\leavevmode\hbox to3em{\hrulefill}\thinspace}
\providecommand{\MR}{\relax\ifhmode\unskip\space\fi MR }
\providecommand{\MRhref}[2]{%
  \href{http://www.ams.org/mathscinet-getitem?mr=#1}{#2}
}
\providecommand{\href}[2]{#2}

\end{document}